\newtheorem{theorem}{Theorem}[section]
\newtheorem{lemma}[theorem]{Lemma}
\newtheorem{proposition}[theorem]{Proposition}
\newtheorem{corollary}[theorem]{Corollary}
\newtheorem{conjecture}[theorem]{Conjecture}
\newtheorem*{thm:main}{Theorem~\ref{thm:main}}
\newtheorem*{corollarylrw}{Corollary~\ref{cor:lrw}}
\newcommand\abs[1]{\lvert #1\rvert}
\newcommand\mat{\boxminus}
\newcommand\tri{\boxslash}
\newcommand\antimat{\boxtimes}
\newcommand\rkbrit{\beta^\cutrk}
\def\K_#1{{K_{#1}}}
\def\S_#1{\overline{K_{#1}}}
\newcommand\cutrk{\rho}
\newcommand\lrw{\operatorname{lrw}}
\newcommand\rd{\operatorname{rd}}
\newcommand\rbrit{\operatorname{rbrit}}
\newcommand{\cF}{\mathcal{F}}
\begin{document}
\title{Graphs of bounded depth-$2$ rank-brittleness}

\author[1,2]{O-joung Kwon\thanks{Supported by the National Research Foundation of Korea (NRF) grant funded by the Ministry of Education (No. NRF-2018R1D1A1B07050294).}}
\affil[1]{Department of Mathematics, Incheon National University, Incheon,~Korea}
\author[2,3]{Sang-il Oum\thanks{Supported by IBS-R029-C1.}}
\affil[2]{Discrete Mathematics Group,
  Institute for Basic Science (IBS), Daejeon,~Korea}
\affil[3]{Department of Mathematical Sciences, KAIST, Daejeon,~Korea}
\affil[ ]{\small \textit{Email addresses:}  
  \texttt{ojoungkwon@gmail.com}, \texttt{sangil@ibs.re.kr}}
\date\today
\maketitle

\begin{abstract}
  We characterize classes of graphs
  closed under taking vertex-minors and 
  having no $P_n$ and no disjoint union of $n$ copies of the $1$-subdivision of $K_{1,n}$ for some $n$.
  Our characterization is described in terms of a tree of radius $2$ whose leaves are labelled by the vertices of  a graph $G$,
  and the width is measured by the maximum possible cut-rank of a partition of $V(G)$ induced by splitting an internal node of the tree to make two components. The minimum width possible is called the depth-$2$ rank-brittleness of $G$.
  We prove that
  for all $n$, every graph with sufficiently large depth-$2$ rank-brittleness
  contains $P_n$ or disjoint union of $n$ copies of the $1$-subdivision of $K_{1,n}$
  as a vertex-minor.
\end{abstract}

\section{Introduction}

\emph{Tree-depth} is a graph parameter in the theory of sparse graph classes, which measures how far a graph is from being a star, introduced by Ne{\v{s}}et{\v{r}}il and Ossona de Mendez~\cite{nevsetvril2006tree}. An equivalent concept has been introduced a few times under the names like the vertex ranking number and the minimum height elemination tree~\cite{bodlaender1998rankings,deogun1994vertex,schaffer1989optimal}. 
It is known that a graph has large tree-depth if and only if it has a long path, see \cite[Section 6.2]{NO2012}.

For some applications, it is desirable to say that complete graphs are also very similar to stars. However, complete graphs have unbounded tree-depth.
To design a graph parameter similar to tree-depth but more suitable for dense graph classes, DeVos, Kwon, and Oum~\cite{DKO2019} introduced the \emph{rank-depth} of a graph.
Roughly speaking, the rank-depth of a graph $G$ is defined in terms of 
a \emph{decomposition},
which is a tree whose leaves are labelled by the vertices of $G$.
A decomposition has two qualities, one of which is the radius of the tree,
and the other is the maximum width of internal nodes, measured
by some connectivity function of $G$.
The rank-depth of a graph $G$ is defined as the minimum integer $k$
such that $G$ admits a decomposition of radius at most $k$ and width at most $k$.
The detailed definition of rank-depth will be reviewed in Section~\ref{sec:prelim}.
In fact, there was an equivalent concept called the \emph{shrub-depth} of classes of graphs, introduced by Ganian, Hlin\v{e}n\'y, Ne{\v{s}}et{\v{r}}il, Obdr\v{z}\'alek, Ossona de Mendez, and Ramadurai~\cite{GHNOO2017,GHNOOR2012}.
The definition of shrub-depth uses logical terms similar to the definition of clique-width~\cite{CO2000}, while the definition of rank-depth uses a tree-like decomposition similar to that of rank-width~\cite{OS2004}.
DeVos, Kwon, and Oum~\cite{DKO2019} showed that a class of graphs has bounded rank-depth if and only if it has bounded shrub-depth.

Hlin{\v{e}}n{\'y}, Kwon, Obdr{\v{z}}{\'a}lek, and Ordyniak~\cite{HlinenyKJS2016}
proposed the following conjecture, which we state in terms of rank-depth.
To state their conjecture, we first introduce vertex-minors. 
The \emph{local complementation} at a vertex $v$ of a graph $G$ is an operation
to obtain a new graph $G*v$ from $G$ by removing all edges $xy$ between two adjacent pairs $x$, $y$ of neighbors of $v$ and adding edges $xy$ for all non-adjacent pairs $x$, $y$ of neighbors of $v$. 
A graph $H$ is a \emph{vertex-minor} of a graph $G$ if $H$ can be obtained from $G$ by a sequence of local complementations and vertex deletions. 
It is known that the rank-depth of a vertex-minor of $G$ is at most the rank-depth of $G$ and so it is natural to think of an obstruction for graphs of bounded rank-depth in terms of vertex-minors. The following conjecture states that paths are obstructions for having bounded rank-depth. 
This conjecture was verified for graphs of rank-width $1$ by Novotn{\'y}~\cite[Theorem 6.3.2]{Novotny2016}.

\begin{conjecture}[Hlin{\v{e}}n{\'y}, Kwon, Obdr{\v{z}}{\'a}lek, and Ordyniak~\cite{HlinenyKJS2016}]\label{conj:rankdepth}
 A class~$\mathcal{C}$ of graphs has bounded rank-depth if and only if there exists an integer $t$ such that no graph $G\in \mathcal{C}$ contains a path of length $t$ as a vertex-minor.
 \end{conjecture}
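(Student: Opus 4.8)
\noindent\textbf{A proof proposal for Conjecture~\ref{conj:rankdepth}.}

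I would dispose of the ``only if'' direction first, since it is routine. The rank-depth of a vertex-minor of $G$ is at most $\rd(G)$, as noted above, and $\rd(P_t)\to\infty$ as $t\to\infty$ --- this is well known, and also follows from the facts that the class of all paths has unbounded shrub-depth (being of bounded degree and unbounded tree-depth) and that bounded rank-depth is equivalent to bounded shrub-depth. Hence a class $\mathcal C$ with $\rd(G)\le k$ for every $G\in\mathcal C$ contains no path $P_t$ with $\rd(P_t)>k$ as a vertex-minor, so one may take $t=t(k)$ with $\rd(P_t)>k$.

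For the ``if'' direction, which is the real content, I would prove the contrapositive in quantitative form: produce a function $f$ such that $\rd(G)>f(t)$ forces $P_t$ to be a vertex-minor of $G$. The first move is to invoke the main theorem of this paper. A tree of radius at most $2$ also witnesses rank-depth, so the depth-$2$ rank-brittleness $b$ of $G$ satisfies $\rd(G)\le\max(b,2)$, and therefore $\rd(G)>f(t)$ forces $b$ to be large; the main theorem then gives that $G$ contains, as a vertex-minor, either a long path --- in which case we are already done --- or $n$ disjoint copies of the $1$-subdivision of $K_{1,n}$ for some large $n$. All the remaining work lies in this second case, and it cannot be sidestepped: a disjoint union of subdivided stars has bounded rank-depth and hence (by the direction just proved) contains no long path as a vertex-minor, so the vertex-minor it supplies is, by itself, useless.

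The crux is thus to show that when $\rd(G)$ is large the subdivided-star outcome can always be upgraded to a long path vertex-minor, by exploiting the depth of $G$ beyond radius $2$. My plan would be to re-examine the proof of the main theorem, retaining the width-bounded decomposition rather than passing to a vertex-minor, so as to exhibit the many subdivided stars as hanging off a single node $v$ of that decomposition together with the additional fact --- forced by $\rd(G)$ being large --- that the decomposition above $v$ is itself deep. One would then want a bounded-length sequence of local complementations that links the stars in series into one long path, uses the bounded width to control the interface between consecutive stars, and iterates up through all scales of depth; I would organize the iteration as an induction on $\rd(G)$ with a balanced-separator dichotomy --- in the balanced case, recurse on the two sides and splice the resulting partial paths across the bounded boundary; in the unbalanced case, a long spine emerges and is fed into the surgery above. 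Making this surgery work --- genuinely threading a long path through a tower of shallow, width-bounded pieces while keeping the effect of each local complementation on the rest of the graph under control --- is, I expect, the principal obstacle, and it is precisely the gap between the depth-$2$ theorem established here and the full conjecture; the rank-width-$1$ case, settled by Novotn{\'y}, is the base case one should reconstruct and then generalize.
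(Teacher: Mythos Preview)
The paper does not prove Conjecture~\ref{conj:rankdepth}; it is stated as an open problem, with only the rank-width-$1$ case settled (Novotn\'y) and the weaker Corollary~\ref{cor:lrw} obtained here as a step toward it. There is therefore no proof in the paper to compare against.

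Your ``only if'' direction is correct and routine. Your ``if'' direction is not a proof but a research outline, and you say as much: the step that would upgrade the $nT_{2,n}$ outcome of Theorem~\ref{thm:main} to a long path is named ``the principal obstacle'' and left undone. Two concrete problems with the sketch. First, the proof of Theorem~\ref{thm:main} does not work inside any fixed decomposition of $G$: it passes through Corollary~\ref{cor:scattered} and Theorem~\ref{thm:scattered}, which extract $nK_k$ as a vertex-minor by Ramsey-type arguments with no reference to a tree, so there is no ``width-bounded decomposition'' to retain in which the subdivided stars sit at a known node. Second, large rank-depth means that \emph{no} decomposition is simultaneously shallow and narrow; it does not hand you a specific deep decomposition whose levels you can climb, so the phrase ``the decomposition above $v$ is itself deep'' has no referent. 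Your balanced-separator induction is likewise not well-posed: rank-depth carries no recursive separator structure analogous to tree-depth, and ``splicing partial paths across a bounded boundary'' under local complementation is exactly the uncontrolled operation that makes the conjecture hard. The gap you identify is the entire content of the conjecture beyond what this paper establishes.
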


As a step towards Conjecture~\ref{conj:rankdepth}, we define a new parameter called \emph{depth-$d$ rank-brittleness} for an integer $d$
by restricting the radius of the tree in the decomposition to be at most $d$ in the
definition of rank-depth.
The \emph{depth-$d$ rank-brittleness} of a graph $G$ is the minimum integer $k$ such that 
$G$ admits a decomposition of radius at most $d$ and width at most $k$.
We denote this parameter by $\rbrit_d(G)$. By definition,
the rank-depth of a graph $G$ is at most $\max\{d, \rbrit_d(G)\}$ for all $d\ge 1$ and 
\[\rbrit_1(G)\ge \rbrit_2(G)\ge \rbrit_3(G)\ge \cdots .\]
In Section~\ref{sec:application}, we will show that
a graph of rank-depth $k$ has linear rank-width at most $k^2$.

        A class $\mathcal{C}$ of graphs is a \emph{vertex-minor ideal} if 
	for every graph $G\in \mathcal{C}$,
        $\mathcal C$ contains all graphs isomorphic to vertex-minors of $G$.
	For a graph $H$, we write $nH$ for the disjoint union of $n$ copies of $H$. 
  It is straightforward to deduce the following proposition by using Ramsey-type results. To see this, one can use Theorem~\ref{thm:largebipartite}, Ramsey's theorem, and Lemma~\ref{lem:antimattopath}. It can be also seen as a special case of a theorem due to Kwon and Oum~\cite[Theorem 1.4]{KO2018sca}, which is stated in Theorem~\ref{thm:scattered}.

        \begin{proposition}%
          A vertex-minor ideal $\mathcal C$ 
          has bounded depth-$1$ rank-brittleness if and only if
          $\{K_2,2K_2,3K_2,\ldots\}\not\subseteq \mathcal C$.
        \end{proposition}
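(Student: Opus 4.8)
The plan is first to unpack the definition of depth-$1$ rank-brittleness. A decomposition of a graph $G$ of radius at most $1$ is a tree some node $c$ of which is within distance $1$ of every other node; any internal node other than $c$ would be adjacent to $c$ and would have a second neighbour, forcing a node at distance $2$ from $c$, which is impossible. So $c$ is the only internal node, all of its neighbours are leaves, and the tree is the star whose leaves are labelled by $V(G)$. Splitting $c$ into two components realises an arbitrary bipartition $(X,V(G)\setminus X)$ of $V(G)$, and conversely every bipartition arises this way, so I would record at the outset that
\[
  \rbrit_1(G)=\max\{\rho_G(X):X\subseteq V(G)\},
\]
the largest cut-rank of a bipartition of $V(G)$.

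With this identity I would handle the forward implication by contraposition: if $\{K_2,2K_2,3K_2,\dots\}\subseteq\mathcal C$ then, for each $n$, taking $X$ to consist of one endpoint of each edge of $nK_2$ makes the corresponding cut matrix an $n\times n$ identity matrix, so $\rbrit_1(nK_2)\ge\rho_{nK_2}(X)=n$; as every $nK_2$ belongs to $\mathcal C$, the class $\mathcal C$ has unbounded depth-$1$ rank-brittleness.

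For the converse, suppose $mK_2\notin\mathcal C$ for some integer $m$. Since $\mathcal C$ is a vertex-minor ideal, no graph in $\mathcal C$ has $mK_2$ as a vertex-minor, so it is enough to find a function $f$ with $\max_X\rho_G(X)\le f(m)$ whenever $G$ has no $mK_2$ vertex-minor, and I would argue the contrapositive. Suppose some bipartition $(X,V(G)\setminus X)$ of $V(G)$ has $\rho_G(X)$ very large. Applying Theorem~\ref{thm:largebipartite} to this bipartition and then cleaning up with Ramsey's theorem, I obtain a vertex-minor of $G$ isomorphic either to $kK_2$ or to the graph obtained from $K_{k,k}$ by deleting a perfect matching, with $k$ as large as desired once $\rho_G(X)$ is large enough. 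In the matching case with $k\ge m$, the graph $mK_2$ is an induced subgraph of $kK_2$, hence a vertex-minor of $G$. In the anti-matching case, Lemma~\ref{lem:antimattopath} produces an arbitrarily long path as a vertex-minor of that anti-matching, hence of $G$; and removing from a path on $3m-1$ vertices every vertex whose position is divisible by $3$ leaves exactly $mK_2$, so a sufficiently long path has $mK_2$ as an induced subgraph. In every case $G$ has $mK_2$ as a vertex-minor, contradicting the hypothesis, which gives the bound and hence $f$.

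I expect no real difficulty beyond invoking the cited results correctly; the one step that is not pure bookkeeping is the passage from a single bipartition of large cut-rank to a matching or anti-matching vertex-minor, which is exactly the content of Theorem~\ref{thm:largebipartite}. The subtlety it resolves is that $\rho_G(X)$ records only the bipartite adjacency between $X$ and $V(G)\setminus X$, while the edges inside $X$ and inside $V(G)\setminus X$ are arbitrary and must be made uniform by Ramsey's theorem before the large rank can be displayed as a matching or its bipartite complement. I would also remark that the whole proposition is a special case of Theorem~\ref{thm:scattered}.
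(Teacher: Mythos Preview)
Your overall strategy is exactly the paper's sketch---Theorem~\ref{thm:largebipartite}, then Ramsey, then Lemma~\ref{lem:antimattopath}---but the case analysis has a real gap. Theorem~\ref{thm:largebipartite} produces \emph{three} bipartite configurations, not two: a matching $\S_k\mat\S_k$, a half-graph $\S_k\tri\S_k$, or an anti-matching $\S_k\antimat\S_k$. You list only the first and third, and then in the anti-matching case you invoke Lemma~\ref{lem:antimattopath} to obtain a long path. But that lemma, despite its label, is entirely about the half-graph operation~$\tri$; it says nothing about~$\antimat$. So the half-graph case is precisely where Lemma~\ref{lem:antimattopath} applies (and your path-to-matching extraction then finishes it), whereas the genuine anti-matching $K_{k,k}$ minus a perfect matching is left untreated.

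The repair is short but must be supplied: after Ramsey makes each side a clique or an independent set, one or two local complementations convert any of the $\antimat$ outcomes (and likewise the $\mat$ outcomes with clique sides, which you also gloss over) into $\S_{k'}\mat\S_{k'}=k'K_2$ with $k'\ge k-3$. For instance, in $\S_k\antimat\S_k$ apply local complementation at $v_1$ and then at $w_2$; on $\{v_3,\dots,v_k,w_3,\dots,w_k\}$ the bipartite pattern becomes a matching, and one further local complementation makes both sides independent. Two smaller points: Theorem~\ref{thm:largebipartite} requires the vertices on one side to have pairwise distinct neighbourhoods, so you must first pass from $X$ to a subset of size $\rho_G(X)$ with linearly independent rows before invoking it; and your closing remark that the proposition is a special case of Theorem~\ref{thm:scattered} is correct and is indeed the quickest route.
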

		
	In this paper, we characterize classes of graphs of bounded depth-$2$ rank-brittleness in terms of forbidden vertex-minors.
	Let $T_{2,n}$ be the $1$-subdivision of $K_{1,n}$, see Figure~\ref{fig:t25}.
        Here is our main theorem.

        \begin{figure}
          \centering
                  \begin{tikzpicture}[baseline]
            \tikzstyle{v}=[circle, draw, solid, fill=black, inner sep=0pt, minimum width=3pt]
            \foreach \y in {0,1,2,3,4}{
              \node [v] at (.5*\y,0) (v\y) {};
            }
            \draw (v0)--(v4);
          \end{tikzpicture}
          \qquad\qquad
          \begin{tikzpicture}[baseline]
            \tikzstyle{v}=[circle, draw, solid, fill=black, inner sep=0pt, minimum width=3pt]
            \node [v] at (0,0) (v) {};
            \foreach \y in {-.5,-1,0,1,.5} {
              \draw (v)--(1,\y) node[v]{}--(2,\y) node[v]{};
            }
          \end{tikzpicture}
          \caption{Graphs $P_5$ and $T_{2,5}$.}
          \label{fig:t25}
        \end{figure}
        
		\begin{theorem}\label{thm:main}
		A vertex-minor ideal $\mathcal{C}$ has bounded depth-$2$ rank-brittleness if and only if 
		\[ \{P_1, P_2, P_3, P_4, \ldots \}\nsubseteq \mathcal{C}
                \text{ and }
		 \{T_{2,1}, 2T_{2,2}, 3T_{2,3}, 4T_{2,4}, \ldots \}\nsubseteq \mathcal{C}. \]
       		\end{theorem}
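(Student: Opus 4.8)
My plan is to prove the two implications separately. Call $S\subseteq V(G)$ \emph{$k$-brittle} in $G$ if $\cutrk_G(Y)\le k$ for all $Y\subseteq S$. Unfolding the definitions, a radius-$2$ decomposition of width $k$ is precisely a partition of $V(G)$ into $k$-brittle parts such that every union of a subfamily of the parts has cut-rank at most $k$ — splitting the centre produces the unions of parts, and splitting the child of the centre that carries a part produces the subsets of that part — so $\rbrit_2(G)$ is the least such $k$. I will also use that $\rbrit_2$ is non-increasing under vertex-minors, which follows from the invariance of $\cutrk$ under local complementation and the fact that deleting a vertex only drops cut-ranks.

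\textbf{The ``only if'' direction.} Since $\mathcal C$ is a vertex-minor ideal, it is enough to show $\rbrit_2(P_n)\to\infty$ and $\rbrit_2(nT_{2,n})\to\infty$. For paths I would argue that a $k$-brittle subset $S$ of $P_n$ has size bounded in terms of $k$ (a sub-path of $P_n[S]$ on more than $2k+2$ vertices already contains a subset of cut-rank exceeding $k$, via its alternate vertices, and if $P_n[S]$ is a union of many short sub-paths then $S$ itself has large cut-rank); hence a width-$k$ decomposition of $P_n$ uses $\Omega(n)$ parts, and a maximum cut of the multigraph on the parts with an edge for each edge of $P_n$ joining distinct parts yields a union $X$ of parts with $\Omega(n)$ edges of $P_n$ leaving it, so $\cutrk_{P_n}(X)=\Omega(n)>k$ for $n$ large. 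For $nT_{2,n}$ I would use that in each copy a $k$-brittle set meets the $n$ middle vertices in at most $k$ of them (taking $Y$ to be those middles forces $\cutrk(Y)$ equal to their number, since each such middle keeps its leaf outside $Y$), so each of the $n$ copies is split among at least $n/k$ parts; a uniformly random $2$-colouring of the parts then leaves the middles of any given copy non-monochromatic with probability at least $\tfrac12$, so some colouring does so for more than $k$ copies, whence the union of the parts coloured $1$ has cut-rank exceeding $k$ (one unit per such copy, the copies being vertex-disjoint).

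\textbf{The ``if'' direction.} By the monotonicity above this is equivalent to the statement highlighted in the abstract: for each $n$ there is $N(n)$ with $\rbrit_2(G)>N(n)\Rightarrow G$ has $P_n$ or $nT_{2,n}$ as a vertex-minor; so I would assume $G$ has neither and bound $\rbrit_2(G)$. The engine will be a \emph{local lemma}: a connected graph $H$ with no $P_n$ and no $T_{2,n}$ vertex-minor has $\rbrit_1(H)\le h(n)$, i.e.\ $\cutrk_H(Y)\le h(n)$ for all $Y\subseteq V(H)$. Granting it, I would apply Theorem~\ref{thm:scattered} to the absence of $nT_{2,n}$ as a vertex-minor to localize the obstruction to a bounded set $Z$, $\abs{Z}\le g(n)$, so that $G-Z$ splits into pieces free of $T_{2,n}$ as a vertex-minor and — being induced subgraphs of $G$ — free of $P_n$; applying the local lemma to the connected components of these pieces makes each component $h(n)$-brittle. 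Taking these components as the parts of a decomposition of $G-Z$, so that every union of parts respecting the piece-structure has small cut-rank, gives a radius-$2$ decomposition of $G-Z$ of width bounded in terms of $n$; reinstating the vertices of $Z$ as singleton children of the centre raises widths by at most $2\abs{Z}$ and keeps all parts brittle, so $\rbrit_2(G)$ is bounded in terms of $n$.

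\textbf{Proof of the local lemma, and the main obstacle.} If $\rbrit_1(H)$ were enormous I would pick $X$ with $\cutrk_H(X)$ enormous; by Theorem~\ref{thm:largebipartite} this yields a large $\mat$-, $\tri$-, or $\antimat$-type vertex-minor of $H$. In the $\tri$- and $\antimat$-cases Lemma~\ref{lem:antimattopath} and its analogue give $P_n$ as a vertex-minor, contrary to hypothesis, so $H$ has a large induced matching as a vertex-minor. Using connectedness of $H$, I would apply Ramsey's theorem to how the remaining vertices of $H$ attach to the two sides of this matching and then perform local complementations funnelling the surviving attachments through a single vertex, obtaining either a long induced path (hence $P_n$) or a vertex adjacent to exactly one endpoint of each of $n$ pairwise non-adjacent edges (hence $T_{2,n}$) — a contradiction either way. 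I expect this induced-matching case to be the main obstacle: extracting $T_{2,n}$, rather than merely a path, from a large induced matching inside a connected graph is precisely what forces $T_{2,n}$ (and not just $P_n$) into the theorem, and controlling how the rest of $H$ meets the matching is the genuinely delicate step.
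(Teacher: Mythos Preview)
Your ``only if'' direction is essentially correct and close in spirit to the paper's argument. The serious problem is in the ``if'' direction, specifically your step~2.

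You write that you would ``apply Theorem~\ref{thm:scattered} to the absence of $nT_{2,n}$ as a vertex-minor to localize the obstruction to a bounded set $Z$, $\abs{Z}\le g(n)$, so that $G-Z$ splits into pieces free of $T_{2,n}$ as a vertex-minor.'' Theorem~\ref{thm:scattered} does not say this. Its contrapositive gives a partition of $V(G)$ into parts of size at most $k$ with bounded $\rho_G$-width \emph{provided} $G$ avoids $nH$ for \emph{every} connected $(k+1)$-vertex graph $H$; it never produces a small deletion set, and avoiding $nT_{2,n}$ alone does not trigger it (for instance, $K_m$ has no $T_{2,3}$ vertex-minor whatsoever, so a graph can contain $nK_m$ for arbitrarily large $m$ while still avoiding $nT_{2,n}$, and the theorem then says nothing). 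What you are really asserting is an Erd\H{o}s--P\'osa-type statement for $T_{2,n}$ as a vertex-minor, which is not provided by anything in the paper and is not obvious. Without it, the local lemma---even if true---has nothing to chew on, and your decomposition never gets off the ground.

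The paper's route is different in kind: rather than trying to decompose $G$, it extracts structure. From large $\rbrit_2(G)$ it obtains $aK_b$ as a vertex-minor for large $a,b$ (Corollary~\ref{cor:scattered}), passes to a locally equivalent graph containing $aK_b$ as an induced subgraph, and---crucially---uses Lemma~\ref{lem:reducetwin} (the three-twins lemma) to guarantee that each clique has many vertices with pairwise distinct neighbourhoods in the outside. Bipartite Ramsey (Theorem~\ref{thm:largebipartite}) applied \emph{from each clique separately} then yields a matching, anti-matching, or half-graph; the half-graph gives $P_n$, and the remaining $a$ matching/anti-matching structures are organized via the sunflower lemma and the product Ramsey theorem into one of a bounded list of canonical configurations (Proposition~\ref{prop:main2}), each of which is shown to contain $P_n$ or $nT_{2,n}$ by explicit local complementations (Lemmas~\ref{lem:first1}--\ref{lem:simpledcm3}).

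A smaller but related issue: your sketch of the local lemma glosses over the one genuinely hard outcome. After bipartite Ramsey plus ordinary Ramsey, the case $\S_n\mat\S_n$ (both sides independent, a matching between) gives an induced $nK_2$, which by itself contains neither $P_5$ nor $T_{2,2}$; getting $T_{2,n}$ or $P_n$ here truly requires the rest of $H$, and ``use connectedness and Ramsey on the attachments'' hides exactly the organizational work (sunflower, product Ramsey, case analysis) that the paper carries out globally rather than locally.
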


                Since  $T_{2,n}$ contains $P_5$ if $n\ge 2$,
                we obtain the following corollary,
                confirming a weaker statement of Conjecture~\ref{conj:rankdepth}.
	
	\begin{corollary}\label{cor:lrw}
          For every positive integer $n$,
          graphs with no vertex-minors isomorphic to $nP_5$
          have bounded depth-$2$ rank-brittleness, bounded rank-depth, and bounded linear rank-width.
	\end{corollary}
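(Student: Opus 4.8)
The plan is to obtain Corollary~\ref{cor:lrw} as a direct consequence of Theorem~\ref{thm:main} together with the two inequalities recorded in the introduction. Fix a positive integer $n$ and let $\mathcal{C}$ be the class of all graphs having no vertex-minor isomorphic to $nP_5$. Since the vertex-minor relation is transitive, $\mathcal{C}$ is a vertex-minor ideal, so Theorem~\ref{thm:main} applies to it, and it suffices to verify that neither family on the right-hand side of Theorem~\ref{thm:main} is contained in $\mathcal{C}$.

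For the first family, I would show that a single long path already lies outside $\mathcal{C}$: deleting the vertices in positions $6, 12, \ldots, 6(n-1)$ from $P_{6n-1}$ leaves exactly $n$ pairwise nonadjacent copies of $P_5$, so $nP_5$ is an induced subgraph, hence a vertex-minor, of $P_{6n-1}$; thus $P_{6n-1}\notin\mathcal{C}$ and $\{P_1,P_2,P_3,\ldots\}\nsubseteq\mathcal{C}$. For the second family, set $k=\max\{2,n\}$. When $k\ge 2$ the graph $T_{2,k}$ contains $P_5$ as an induced subgraph (a leaf, its neighbor, the center, another subdivision vertex, and its leaf), so in $n$ of the $k$ copies of $T_{2,k}$ inside $kT_{2,k}$ one may delete all but such a $P_5$ and delete all vertices of the remaining $k-n$ copies; this exhibits $nP_5$ as a vertex-minor of $kT_{2,k}$, so $kT_{2,k}\notin\mathcal{C}$ and $\{T_{2,1},2T_{2,2},3T_{2,3},\ldots\}\nsubseteq\mathcal{C}$.

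With both conditions checked, Theorem~\ref{thm:main} yields a constant $c$ with $\rbrit_2(G)\le c$ for every $G\in\mathcal{C}$. The inequality $\rd(G)\le\max\{2,\rbrit_2(G)\}$ from the introduction then gives $\rd(G)\le\max\{2,c\}$, so $\mathcal{C}$ has bounded rank-depth; and finally the bound $\lrw(G)\le\rd(G)^2$ proved in Section~\ref{sec:application} gives $\lrw(G)\le\max\{2,c\}^2$, so $\mathcal{C}$ has bounded linear rank-width.

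I do not expect any real obstacle in this argument: the entire difficulty is concentrated in establishing Theorem~\ref{thm:main} and the bound $\lrw(G)\le\rd(G)^2$ of Section~\ref{sec:application}. The one point that needs care in the reduction is that it is the obstructions appearing in Theorem~\ref{thm:main}, rather than $nP_5$ itself, that must be shown to lie outside $\mathcal{C}$, which is precisely why we verify that a sufficiently long path and a suitable $kT_{2,k}$ each contain $nP_5$ as a vertex-minor.
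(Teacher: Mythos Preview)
Your proposal is correct and follows essentially the same approach as the paper's own proof: verify that the vertex-minor ideal of $nP_5$-free graphs excludes a long path and some $kT_{2,k}$, apply Theorem~\ref{thm:main}, then use $\rd(G)\le\max\{2,\rbrit_2(G)\}$ and Proposition~\ref{prop:inequality}. Your version is in fact slightly more careful than the paper's, which asserts $nT_{2,n}\notin\mathcal{C}$ without addressing the degenerate case $n=1$ (where $T_{2,1}\cong P_3$ does lie in $\mathcal{C}$); your choice $k=\max\{2,n\}$ handles this cleanly.
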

	
	We sketch the proof of Theorem~\ref{thm:main}.
	It is straightforward to show that $P_n$ and $nT_{2,n}$ have large depth-$2$ rank-brittleness.
	We mainly show that for every fixed $n$,
	if a graph $G$ has sufficiently large depth-$2$ rank-brittleness, 
	then it has a vertex-minor isomorphic to  $P_n$ or $nT_{2,n}$. 
        A theorem of Kwon and Oum~\cite[Theorem 1.4]{KO2018sca}
        will imply that every graph of large depth-$2$ rank-brittleness
        has a vertex-minor isomorphic to $aK_b$
        for large $a$ and $b$.
        By taking a graph locally equivalent to $G$, we may assume that
        $G$ has an induced subgraph isomorphic to $aK_b$.
	
	In Section~\ref{sec:rank1pair}, we prove that if a graph $H$ contains $3$
        pairwise twins, then
        one of them can be removed without decreasing the depth-$2$ rank-brittleness.
        Using that, each component $C$ of $aK_b$ can be partitioned into
        at least $b/2$ sets such that vertices in distinct sets are not twins.
	By the Ramsey-type result on bipartite graphs,
        we will extract a large (induced) matching or an anti-matching or a half graph between 
	$C$ and the rest. We find this for each component of $aK_b$.
        Then using the sunflower lemma and Ramsey's theorem,
        we will clean up all the structures and find a vertex-minor isomorphic to $nT_{2,n}$ or $P_n$.
	Section~\ref{sec:construction} is devoted to describe all the intermediate structures.
	The proof of Theorem~\ref{thm:main} is given in Section~\ref{sec:mainproof}.
        Section~\ref{sec:application} shows an inequality between linear rank-width and rank-depth and presents a corollary of Theorem~\ref{thm:main} for graphs with no vertex-minors isomorphic to $nP_5$.
			
		\section{Preliminaries}\label{sec:prelim}
		
		All graphs in this paper are simple and undirected. 
		For a graph $G$, we denote by $V(G)$ and $E(G)$ the vertex set and the edge set of $G$, respectively. 
		Let $G$ be a graph.
	For $S\subseteq V(G)$, we denote by $G[S]$ the subgraph of $G$ induced by $S$, and  
	for two disjoint vertex subsets $S$ and $T$ of $G$, 
	we denote by $G[S,T]$ the bipartite graph with bipartition $(S,T)$ such that 
	for $a\in S$ and $b\in T$, $a,b$ are adjacent in $G[S,T]$ if and only if they are adjacent in $G$.
  For $v\in V(G)$, we denote by $G- v$ the graph obtained from $G$ by removing~$v$ and all edges incident with $v$.
  For a set $X$ of vertices, we denote by $G-X$ the graph obtained from $G$ by deleting all vertices in $X$ and all edges incident with those vertices.
	For $v\in V(G)$, the set of \emph{neighbors} of $v$ in $G$ is denoted by $N_G(v)$, and the \emph{degree} of $v$ is the size of $N_G(v)$.
	We denote by $A(G)$ the \emph{adjacency matrix} of $G$.
	
	For two disjoint vertex subsets $A$ and $B$ of $G$, 
	we say that $A$ is \emph{complete} to $B$ if
        every vertex in $A$ is adjacent to all vertices in $B$.
	Similarly, $A$ is \emph{anti-complete} to $B$, if
        every vertex in $A$ is non-adjacent to all vertices in $B$.
        A \emph{clique} is a set of pairwise adjacent vertices
        and an \emph{independent} set is a set of pairwise non-adjacent vertices.
        
		Two vertices $v$ and $w$ in a graph $G$ are called \emph{twins} if 
		$N_G(v)\setminus \{v,w\}=N_G(w)\setminus \{v,w\}$.
		Note that a set of pairwise twins is either a clique or an independent set.

	Let $K_n$ denote the complete graph on $n$ vertices, and let $K_{1,n}$ denote the star with $n$ leaves.
	Let $P_n$ denote the path on $n$ vertices. 
	For a graph $G$, we denote by $\overline{G}$ the \emph{complement} of $G$, that is, two vertices $v$ and $w$ in $G$ are adjacent if and only if they are not adjacent in $\overline{G}$.

		We write $R(n;k)$ to denote the minimum number $N$ such
	that every coloring of the edges of $K_N$ into $k$ colors induces a monochromatic complete subgraph on $n$ vertices. The classical theorem of Ramsey implies that $R(n;k)$ exists.

	We also use the sunflower lemma. Let $\cF$ be a family of sets. 
	A subset $\{M_1, M_2, \ldots, M_p\}$ of $\cF$ is a \emph{sunflower} with \emph{core} $A$ (possibly an empty set) and $p$ \emph{petals} if
	for all distinct $i,j\in \{1,2, \ldots, p\}$, $M_i\cap M_j=A$. 
	
	\begin{theorem}[Sunflower Lemma~{\cite[Erd\H{o}s and Rado]{ErdosR1960}}]\label{thm:sunflower}
	Let $k$ and $p$ be positive integers, and 
	$\cF$ be a family of sets each of cardinality $k$. 
	If $\abs{\cF}> k!(p-1)^k$, then $\cF$ contains a sunflower with $p$ petals.
      \end{theorem}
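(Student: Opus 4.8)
The plan is to prove the Sunflower Lemma by induction on $k$. The base case $k=1$ is immediate: a family of more than $p-1$ singletons must contain $p$ distinct, hence pairwise disjoint, singletons, and these form a sunflower with empty core and $p$ petals.

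For the inductive step, assume $k\ge 2$ and that the statement holds for families of $(k-1)$-element sets. Given $\cF$ with $\abs{\cF}>k!(p-1)^k$, I would first select a \emph{maximal} subfamily $\mathcal A=\{A_1,\ldots,A_t\}\subseteq\cF$ consisting of pairwise disjoint sets. If $t\ge p$, then $\mathcal A$ is already a sunflower with empty core and at least $p$ petals, and we are done. So assume $t\le p-1$, and let $Y=A_1\cup\cdots\cup A_t$, which has at most $k(p-1)$ elements. By maximality of $\mathcal A$, every member of $\cF$ meets $Y$.

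Now comes the pigeonhole step: since each of the more than $k!(p-1)^k$ sets in $\cF$ contains at least one element of $Y$ and $\abs Y\le k(p-1)$, some fixed $y\in Y$ lies in more than $k!(p-1)^k/(k(p-1))=(k-1)!(p-1)^{k-1}$ members of $\cF$. Deleting $y$ from each such set gives a family $\cF'=\{F\setminus\{y\}\colon F\in\cF,\ y\in F\}$; since the sets containing $y$ are distinct, so are their images after removing $y$, hence $\abs{\cF'}>(k-1)!(p-1)^{k-1}$. Applying the induction hypothesis to $\cF'$ yields a sunflower $\{M_1,\ldots,M_p\}$ in $\cF'$ with some core $A$. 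Re-inserting $y$, the sets $M_1\cup\{y\},\ldots,M_p\cup\{y\}$ belong to $\cF$ and form a sunflower with core $A\cup\{y\}$ and $p$ petals, completing the induction.

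There is no serious obstacle here; the only points requiring a little care are verifying that the deletion map $F\mapsto F\setminus\{y\}$ is injective on the sets containing $y$ (so that the bound on $\abs{\cF'}$ is legitimate), checking the arithmetic identity $k!(p-1)^k/(k(p-1))=(k-1)!(p-1)^{k-1}$ that makes the induction close exactly, and confirming that adding $y$ back to a sunflower core keeps it a sunflower core, since all pairwise intersections of the petals gain precisely the element $y$.
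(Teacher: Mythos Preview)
Your proof is correct and is precisely the classical Erd\H{o}s--Rado induction on $k$. The paper itself does not give a proof of this statement; it is quoted as a known result with a citation to~\cite{ErdosR1960}, so there is nothing to compare against beyond noting that your argument is the standard one.
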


      \subsection{Vertex-minors}
For a vertex $v$ in a graph $G$, to perform \emph{local complementation} at~$v$, 
replace the subgraph of $G$ induced on $N_G(v)$ 
by its complement graph. 
We write $G*v$ to denote the graph obtained from $G$ by applying
local complementation at~$v$.
Two graphs $G$ and $H$ are \emph{locally equivalent} if $G$ can be obtained from $H$ by a sequence of local complementations.
A graph $H$ is a \emph{vertex-minor} of a graph $G$
if $H$ is an induced subgraph of a graph which is locally equivalent to $G$.

        \subsection{Rank-depth and rank-brittleness}
The \emph{cut-rank} function of a graph $G$, denoted by $\cutrk_G(S)$ for a subset $S$ of $V(G)$,
is defined as the rank of an $S\times (V(G)\setminus S)$ $0$-$1$ matrix over the binary field whose $(a,b)$-entry for $a\in S$, $b\notin S$ is $1$ if $a$, $b$ are adjacent and $0$ otherwise.
The cut-rank function is invariant under the local complementation, see Oum~\cite{Oum2004}.
The cut-rank function satisfies the \emph{submodular inequality}, that is, for all $X,Y\subseteq V(G)$, 
$\rho_G(X)+\rho_G(Y)\ge \rho_G(X\cap Y)+\rho_G(X\cup Y)$. 
The \emph{$\rho_G$-width} of a partition $\mathcal P=(X_1,X_2,\ldots,X_m)$ of $V(G)$, for some $m$,
        is \[
          \max\left\{ \rho_G\bigl(\bigcup_{i\in I} X_i\bigr): I\subseteq \{1,2,\ldots,m\}\right\}.
        \]

A \emph{decomposition} of a  graph $G$  is 
a pair $(T,\sigma)$ of a tree $T$ with at least one internal node and a bijection $\sigma$ from $V(G)$ to the set of leaves of $T$.
The \emph{radius} of a decomposition $(T,\sigma)$ 
is defined to be the radius of the tree~$T$. 
For an internal node $v \in V(T)$, the components of the graph $T - v$ give rise to a partition $\mathcal P_v$ of $V(G)$ by $\sigma$
and the \emph{width} of $v$ is defined to be 
the $\rho_G$-width of $\mathcal P_v$. 
The \emph{width} of the decomposition $(T,\sigma)$ is the maximum width of an internal node of $T$.
We say that a decomposition $(T, \sigma)$ is a $(k,r)$-\emph{decomposition} of $G$
if the width is at most $k$ and the radius is at most $r$. 
The \emph{rank-depth} of a graph $G$, denoted by $\rd(G)$, is the minimum integer $k$ such that 
$G$ admits a $(k, k)$-decomposition.
If $\abs{V(G)}<2$, then there exists no decomposition and rank-depth is defined to be $0$.
Note that every tree in a decomposition has radius at least $1$ and therefore
the rank-depth of a graph is at least $1$ if $\abs{V(G)}\ge 2$.

The \emph{depth-$d$ rank-brittleness} of a graph $G$,
denoted by $\rbrit_d(G)$, is the minimum integer $k$
such that $G$ admits a $(k,d)$-decomposition.
If $\abs{V(G)}<2$, then we define $\rbrit_d(G)=0$.
Note that the depth-$1$ rank-brittleness of a graph $G$
is equal to $\max_{A\subseteq V(G)} \rho_G(A)$.

\subsection{Constructions of common graphs}
For two graphs $G$ and $H$ on the disjoint vertex sets, each having $n$ vertices, we would like
to introduce operations to construct  graphs on $2n$ vertices by making the disjoint
union of them and adding some edges between two graphs. 
Roughly speaking, $G\mat H$ will add a perfect matching,
 $G\antimat H$ will add the complement of a perfect matching, 
 and $G\tri H$ will add a half graph.
Formally, 
for two $n$-vertex graphs $G$ and $H$ with fixed ordering on the vertex sets $\{v_1,v_2,\ldots,v_n\}$ and $\{w_1, w_2, \ldots, w_n\}$ respectively, 
let 
$G\mat H$, $G\antimat H$, $G\tri H$
be graphs on the vertex set $V(G)\cup V(H)$ 
whose subgraph induced by $V(G)$ or $V(H)$
is $G$ or $H$, respectively
such that
 for all $i,j\in \{1,2,\ldots,n\}$, 
\begin{enumerate}[(i)]
\item 
$v_iw_j\in E(G\mat H)$ if and only if $i=j$,
\item 
$v_iw_j\in E(G\antimat H)$ if and only if $i\neq j$,
\item 
$v_iw_j\in E(G\tri H)$ if and only if $i\ge j$.
\end{enumerate}
See Figure~\ref{fig:construction} for illustrations of $\K_5\mat\S_5$, $\K_5\antimat\S_5$, and $\K_5\tri \S_5$.

\begin{figure}
 \tikzstyle{v}=[circle, draw, solid, fill=black, inner sep=0pt, minimum width=3pt]
  \centering
  \newcommand\Sfive[1]{    \begin{tikzpicture}[scale=0.6,rotate=-90]
      \foreach \x in {1,...,5} {
        \node [v]  (v\x) at(0,\x){};
        \node [v]  (w\x) at (2,\x){};
        \draw (-.5,\x) node [above] {$v_\x$};
        \draw (w\x) node [below] {$w_\x$};
      }
      \draw (v1)--(v5);
      \draw(v1) [in=-120,out=120] to (v3);
      \draw(v2) [in=-120,out=120] to (v4);
      \draw(v3) [in=-120,out=120] to (v5);
      \draw(v1) [in=-120,out=120] to (v4);
      \draw(v2) [in=-120,out=120] to (v5);
      \draw(v1) [in=-120,out=120] to (v5);
      \foreach \x in {1,...,5} 
      \foreach \y in {1,...,5} {
        #1
      }
    \end{tikzpicture}
    }
    \Sfive{       \ifnum \x=\y  \draw (v\x)--(w\y);       \fi      }
    $\quad$
    \Sfive{       \ifnum \x=\y  \else      \draw (v\x)--(w\y);       \fi      }
    $\quad$
    \Sfive{       \ifnum \x>\y   \draw (v\x)--(w\y);       
    \else \ifnum\x=\y \draw (v\x)--(w\y);\fi\fi}
  \caption{$\K_5\mat \S_5$, $\K_5\antimat \S_5$, and $\K_5\tri \S_5$.}
  \label{fig:construction}
\end{figure}

	We will use the following lemma. Similar lemmas appeared in \cite[Lemma 2.8]{KwonO2014}, \cite[Proposition 6.2]{HlinenyKJS2016}, and \cite[Lemma 5.6]{KKOS2018}.
\begin{lemma}[Kwon and Oum {\cite[Lemma 6.5]{KO2018sca}}]\label{lem:antimattopath} 
Let $n$ be a positive integer.
\begin{enumerate}[(1)]
	\item $\S_{n}\tri\S_{n}$ is locally equivalent to $P_{2n}$.
	\item $\K_{n}\tri\S_{n}$ is locally equivalent to $P_{2n}$.
	\item If $n\ge 2$, then $\K_{n}\tri\K_{n}$ has a vertex-minor isomorphic to $P_{2n-2}$.
\end{enumerate}
\end{lemma}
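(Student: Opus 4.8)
The plan is to establish part~(1) by a direct computation with bipartite adjacency matrices, and then to reduce parts~(2) and~(3) to it. For part~(1), note that $\S_{n}\tri\S_{n}$ is bipartite with bipartition $(\{v_1,\dots,v_n\},\{w_1,\dots,w_n\})$ and that $v_iw_j$ is an edge exactly when $i\ge j$, so its bipartite adjacency matrix over $\mathrm{GF}(2)$ — with rows indexed by the $v_i$ and columns by the $w_j$ — is the $n\times n$ lower-triangular all-ones matrix $L$. I would then invoke the standard correspondence between pivoting a bipartite graph along a perfect matching (a composition of local complementations) and inverting its bipartite adjacency matrix over $\mathrm{GF}(2)$: since $\{v_1w_1,\dots,v_nw_n\}$ is a perfect matching of $\S_{n}\tri\S_{n}$, pivoting along it produces the bipartite graph whose bipartite adjacency matrix is $L^{-1}$. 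A short computation shows that $L^{-1}$ has entries $1$ exactly on the main diagonal and the first subdiagonal, and the bipartite graph with such a matrix is isomorphic to $P_{2n}$; this proves part~(1). (One can instead prove part~(1) by writing down an explicit sequence of local complementations and verifying it by induction on $n$, but the matrix argument is cleaner.)

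For part~(2), I would apply local complementation at $v_1$ to $\K_{n}\tri\S_{n}$. The neighborhood of $v_1$ is $\{v_2,\dots,v_n,w_1\}$, which induces a clique, since the $v_i$ are pairwise adjacent and $w_1$ is adjacent to all of them; complementing it makes this set independent. Tracking which of the remaining adjacencies get toggled, one checks that $\K_{n}\tri\S_{n}*v_1$ is bipartite with parts $\{v_1,w_2,\dots,w_n\}$ and $\{w_1,v_2,\dots,v_n\}$, and that in these vertex orderings its bipartite adjacency matrix is the $n\times n$ upper-triangular all-ones matrix. Reversing the orderings of the two parts turns this matrix into the lower-triangular all-ones matrix, so $\K_{n}\tri\S_{n}*v_1\cong\S_{n}\tri\S_{n}$, and hence $\K_{n}\tri\S_{n}$ is locally equivalent to $P_{2n}$ by part~(1).

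For part~(3), assume $n\ge 2$ and apply local complementation at $v_n$, whose neighborhood is the set of all $2n-1$ remaining vertices and which induces a graph in which $\{v_1,\dots,v_{n-1}\}$ and $\{w_1,\dots,w_n\}$ are cliques with $v_iw_j$ an edge iff $i\ge j$. After the complementation both of these sets become independent, $v_iw_j$ is an edge iff $i<j$, and $v_n$ remains adjacent to everything. Deleting $v_n$ makes $w_1$ isolated, so delete $w_1$ as well; the remaining graph lies on $\{v_1,\dots,v_{n-1}\}\cup\{w_2,\dots,w_n\}$ with $v_iw_j$ an edge iff $i<j$, and after reversing the order on one side it is isomorphic to $\S_{n-1}\tri\S_{n-1}$, which is locally equivalent to $P_{2n-2}$ by part~(1). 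Therefore $P_{2n-2}$ is a vertex-minor of $\K_{n}\tri\K_{n}$. The step I expect to need the most care is the pivot--matrix-inverse correspondence used in part~(1): one must either cite it or prove it by induction, checking that a single edge-pivot performs a principal-pivot (Schur-complement) step on the bipartite adjacency matrix while moving the two pivoted vertices to opposite sides of the bipartition. The remaining work is routine bookkeeping of toggled adjacencies together with recognizing the resulting matrices as bipartite adjacency matrices of paths.
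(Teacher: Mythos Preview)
The paper does not prove this lemma; it is quoted from \cite[Lemma 6.5]{KO2018sca} without proof, so there is no argument in the present paper to compare against. Your proposal is therefore supplying a proof the authors chose to outsource, and the outline you give is correct in all three parts.

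A few remarks on the details. In part~(1), your direct checks are right: the bipartite adjacency matrix of $\S_n\tri\S_n$ is the lower-triangular all-ones matrix $L$, and pivoting along the diagonal matching produces the bipartite graph with matrix $L^{-1}$, which over $\mathrm{GF}(2)$ has $1$'s exactly on the diagonal and first subdiagonal and hence is $P_{2n}$. You correctly flag the pivot--inverse correspondence as the one nontrivial ingredient; it is standard (it follows from the fact that iterated edge-pivots realise a principal pivot transform on the adjacency matrix, and a PPT on the full index set returns the inverse), but a citation or short inductive proof would be appropriate. For part~(2), your computation that $(\K_n\tri\S_n)*v_1$ is bipartite with parts $\{v_1,w_2,\dots,w_n\}$ and $\{w_1,v_2,\dots,v_n\}$ and upper-triangular bipartite matrix is accurate, and this is indeed isomorphic to $\S_n\tri\S_n$. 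For part~(3), the reduction via $*v_n$ and deletion of $v_n,w_1$ to reach $\S_{n-1}\tri\S_{n-1}$ is correct as stated.

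So the approach is sound; the only thing to tighten is a precise reference or proof for the matching-pivot/matrix-inverse fact in part~(1), exactly as you anticipated.
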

	We also use the Ramsey-type result on bipartite graphs without twins.
			\begin{theorem}[Ding, Oporowski, Oxley, and 
  Vertigan~\cite{GuoliBJD1996}]\label{thm:largebipartite}
     For every positive integer $n$,
     there exists an integer $B(n)$ 
     such that for every bipartite graph $G$ with a bipartition $(S,T)$, 
     if no two vertices in $S$ have the same set of neighbors
     and      $\abs{S}\ge B(n)$,  
     then $S$ and $T$ have $n$-element subsets $S'$ and $T'$, respectively, such that 
     $G[S',T']$ is isomorphic to $\S_{n}\mat\S_{n}$, $\S_{n}\tri \S_{n}$, or
     $\S_{n}\antimat\S_{n}$. 
\end{theorem}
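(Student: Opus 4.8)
The plan is to reformulate the statement in terms of $0$-$1$ matrices and then run a two-stage argument: a greedy ``binary splitting'' to extract a triangular scaffold, followed by Ramsey's theorem to make that scaffold canonical. Let $M$ be the $S\times T$ adjacency matrix of $G$ over $\{0,1\}$; the hypothesis says the rows of $M$ are pairwise distinct. Having chosen $n$ rows (from $S$) and $n$ columns (from $T$) and an ordering of each, the resulting bipartite graph is $\S_n\mat\S_n$ precisely when this submatrix is the identity $I_n$, is $\S_n\antimat\S_n$ precisely when it is $J_n-I_n$, and is $\S_n\tri\S_n$ precisely when it is the lower triangular matrix with $1$s exactly on and below the diagonal. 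So it suffices to produce, whenever $\abs{S}$ is large, $n$ rows and $n$ columns whose submatrix equals one of these three shapes in a suitable ordering.

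\textbf{Stage 1: the scaffold.} I would build nested sets $S=S_0\supseteq S_1\supseteq\cdots$ together with rows $s_1,s_2,\dots$ and columns $c_1,c_2,\dots$ as follows. Given $S_{i-1}$ with $\abs{S_{i-1}}\ge 2$: since the rows of $S_{i-1}$ are still pairwise distinct, some column is non-constant on $S_{i-1}$; let $c_i$ be such a column and partition $S_{i-1}$ into the two non-empty parts on which $c_i$ is constant, let $S_i$ be the larger part, and let $s_i$ be any element of the smaller part. Then $\abs{S_i}\ge\abs{S_{i-1}}/2$; setting $a_i:=M[s_i,c_i]$, the column $c_i$ is constantly $1-a_i$ on $S_i$, hence $M[s_j,c_i]=1-a_i$ for every $j>i$ since $s_j\in S_j\subseteq S_i$. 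Moreover $c_i$ is constant on $S_i$ and so can never be selected again, which makes $c_1,c_2,\dots$ automatically distinct, and likewise $s_1,s_2,\dots$ are distinct. Consequently, if $\abs{S}\ge 2^m$ we obtain rows $s_1,\dots,s_m$ and columns $c_1,\dots,c_m$ such that the submatrix $M'$ on these (with $(s_1,\dots,s_m)$ as rows and $(c_1,\dots,c_m)$ as columns) satisfies $M'[i,i]=a_i$ and $M'[j,i]=1-a_i$ for all $j>i$, while the ``upper'' entries $M'[i,j]$ with $i<j$ are still unconstrained.

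\textbf{Stage 2: making it canonical.} First, by pigeonhole on $a_1,\dots,a_m$, some value $a\in\{0,1\}$ is attained at least $m/2$ times; restrict to those indices, keeping their order. Next, $2$-color each pair $i<j$ of surviving indices by the entry $M'[i,j]$ and apply Ramsey's theorem to obtain a monochromatic subset $I$ of size at least $n+1$, of some color $b$. Ordering rows and columns by $I$, the resulting submatrix is completely determined by $(a,b)$: the diagonal is $a$, entries below the diagonal are $1-a$, and entries above are $b$. If $(a,b)\in\{(1,0),(0,1),(1,1)\}$, then restricting to the first $n$ rows and columns gives $I_n$, $J_n-I_n$, or a half-graph, i.e.\ $\S_n\mat\S_n$, $\S_n\antimat\S_n$, or $\S_n\tri\S_n$. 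If $(a,b)=(0,0)$, the $(n+1)\times(n+1)$ submatrix is strictly lower triangular, and discarding its first row and last column gives exactly the adjacency matrix of $\S_n\tri\S_n$. Tracking the estimates, one may take $B(n)=2^{2R(n+1;2)}$.

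The step needing the most care is the scaffold construction in Stage 1: it has to be arranged so that the only undetermined part of $M'$ is a single triangle, since that is what allows one application of $2$-color Ramsey on pairs to finish rather than a more involved iteration; in particular one must notice that the selected columns come out pairwise distinct for free and that row-distinctness is inherited by every $S_i$, so the hypothesis is only ever used on \emph{subsets} of $S$ and never on subsets of $T$, matching the asymmetry of the statement. The remaining subtlety is the off-by-one between the strict and the non-strict triangular matrices, which is exactly why the Ramsey step extracts $n+1$ indices rather than $n$.
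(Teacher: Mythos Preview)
The paper does not prove Theorem~\ref{thm:largebipartite}; it is quoted from Ding, Oporowski, Oxley, and Vertigan~\cite{GuoliBJD1996} and used as a black box. So there is no proof in the paper to compare against.

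That said, your argument is correct and self-contained. The binary-splitting scaffold in Stage~1 is set up exactly right: choosing $s_i$ from the \emph{smaller} part guarantees the below-diagonal entries are determined while keeping $\abs{S_i}\ge\abs{S_{i-1}}/2$, and your observation that each $c_i$ becomes constant on $S_i$ cleanly forces the $c_i$ to be distinct. In Stage~2, the four $(a,b)$ cases are handled correctly; one small remark is that in the case $(a,b)=(1,1)$ the matrix you obtain is \emph{upper} triangular (ones on and above the diagonal) rather than the lower-triangular form you fixed at the outset, but of course the corresponding bipartite graph is still isomorphic to $\S_n\tri\S_n$ after reversing both orderings, so the conclusion stands. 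The off-by-one for $(a,b)=(0,0)$ is handled precisely by taking $n+1$ indices and dropping the first row and last column, and the resulting bound $B(n)=2^{2R(n+1;2)}$ is legitimate.
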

	
\begin{figure}
  \centering
  \tikzstyle{v}=[circle, draw, solid, fill=black, inner sep=0pt, minimum width=3pt]
  \begin{tikzpicture}[scale=0.7]
    \foreach \i in {1,2,3}{
      \foreach \x in {1,2,3,4}{
        \node [v]  (w\x\i) at (\x+\i*5,0){};
        \node [v]  (v\x\i) at(\x+\i*5,3.5){};
        
        \draw (\x+\i*5,3.8) node [above] {$v_\x^{(\i)}$};
        \draw (\x+\i*5,-2) node [below] {$w_\x^{(\i)}$};
      }
      \draw (v1\i)--(v4\i);
      \draw(v1\i) [in=150,out=30] to (v3\i);
      \draw(v2\i) [in=150,out=30] to (v4\i);
      \draw(v1\i) [in=150,out=30] to (v4\i);
      \foreach \x in {1,...,4} {
        \draw (v\x\i)--(w\x\i);
      }
    }
    \foreach \i in {1,2} {
      \foreach \j in {2,3} {
        \ifnum \i<\j {
        \foreach \x in {1,...,4}{
          \foreach \y in {1,...,4}{
            \draw (v\x\i)--(w\y\j);
            \draw (w\x\i) [in=-150,out=-30] to (w\y\j);
            }
          }
        }
        \fi
      }
    }
      
  \end{tikzpicture}
  \caption{The graph $(K_4\mat \S_4)^3_A$ for $A=
      \binom{0 ~ 1}{0~1}
      $.}
  \label{fig:construction2}
\end{figure}
For a positive integer $t$,
a $2\times 2$ matrix $A=(\begin{smallmatrix}a&b\\c&d\end{smallmatrix})$,
and a binary operator $\odot \in \{\mat, \antimat, \tri\}$ on two graphs of the same number of vertices,
we define $(G\odot H)^t _A$
as the graph on the disjoint union of $t$ copies of $G\odot H$
such that for all $1\le i<j\le t$,
\begin{enumerate}[(i)]
\item the $i$-th copy of $G$ is complete to
  the $j$-th copy of $G$ if $a=1$
  and anti-complete if $a=0$, 
\item the $i$-th copy of $G$ is complete to
  the $j$-th copy of $H$ if $b=1$,
  and anti-complete if $b=0$, 
\item the $i$-th copy of $H$ is complete to
  the $j$-th copy of $G$ if  $c=1$,
  and anti-complete if $c=0$, 
\item the $i$-th copy of $H$ is complete to
  the $j$-th copy of $H$ if  $d=1$,
  and anti-complete if $d=0$.
\end{enumerate}
See Figure~\ref{fig:construction2} for an illustration.

	\section{Lemma on three twins}\label{sec:rank1pair}
	
  In this section, we prove that if a graph has three pairwise twins, then one of them can be removed without decreasing its depth-$d$ rank-brittleness for $d\ge 2$.
  It holds for all $d\ge 2$ but we will only use it for $d=2$ later.
	
	\begin{lemma}\label{lem:reducetwin}
	Let $d\ge 2$ be an integer. 
        Let $v$, $w$, $z$ be vertices of a graph $G$ that are pairwise twins. 
	Then $\rbrit_d(G)=\rbrit_d(G-v)$.
	\end{lemma}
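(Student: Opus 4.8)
The inequality $\rbrit_d(G-v)\le \rbrit_d(G)$ is the easy direction: given a $(k,d)$-decomposition $(T,\sigma)$ of $G$, simply delete the leaf $\sigma(v)$ from $T$, suppress the resulting degree-$2$ node if necessary, and restrict $\sigma$ to $V(G-v)$. Deleting a leaf cannot increase the $\rho_G$-width of any partition (restriction of a cut to a smaller ground set cannot increase its rank, and $\rho_{G-v}$ is just $\rho_G$ restricted to subsets of $V(G-v)$), and it cannot increase the radius; suppressing a node only helps. So the whole content is the reverse inequality $\rbrit_d(G)\le\rbrit_d(G-v)$.

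For that, start with an optimal $(k,d)$-decomposition $(T',\sigma')$ of $G-v$ with $k=\rbrit_d(G-v)$, and build a $(k,d)$-decomposition of $G$ by inserting a leaf for $v$. The natural place to attach $v$ is right next to $w$ (equivalently $z$): since $v$ and $w$ are twins in $G$, for any set $S\subseteq V(G)$ with $v,w$ on the same side, and any $x\notin S$, the rows of $v$ and $w$ in the cut-rank matrix agree except possibly in the $\{v,w\}$-block, which lies inside $S$; so $\rho_G(S)=\rho_G(S\setminus\{v\})$ when $w\in S\setminus\{v\}$. The plan is therefore: let $\ell=\sigma'(w)$ be the leaf of $T'$ assigned to $w$, let $p$ be its neighbor in $T'$, subdivide the edge $\ell p$ by a new node $q$, and attach a new leaf $\ell_v$ to $q$ with $\sigma(v)=\ell_v$; extend $\sigma'$ to $\sigma$ accordingly. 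One must check that (a) the radius does not exceed $d$, and (b) every internal node still has width at most $k$.

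For the radius bound (a): here is where we use $d\ge 2$ and the twin structure. Subdividing $\ell p$ and hanging a pendant leaf could in principle push the eccentricity of a center up by one. The fix is to choose $T'$ with a center $c$ and, if $w$ happens to be at distance exactly $d$ from $c$ (so that the naive insertion would create a vertex at distance $d+1$), first re-route: because $v,w,z$ are pairwise twins, we are free to also move $w$ (and $z$) to sit beside each other near $v$'s intended spot, or to pick among $v,w,z$ the one whose leaf lies closest to $c$ and reattach the other two next to it. Concretely, among the three leaves $\sigma'(w),\sigma'(z)$ (in $T'$) pick the one, say $w$, minimizing distance to the center; since all three are interchangeable as twins in $G$, a decomposition of $G-z$ with $w$ at depth $\le d-1$ suffices, and one shows such a decomposition exists with the same width by the same leaf-relocation argument (relocating a leaf among twins does not change any cut-rank, again by the twin identity above). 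Then attaching $v$ beside $w$ keeps everything within radius $d$. I would organize this as a small preliminary claim: \emph{if $x,y$ are twins in $H$, then $H$ has a $(k,d)$-decomposition in which $\sigma(x)$ and $\sigma(y)$ are siblings, with $k=\rbrit_d(H)$}, proved by taking an optimal decomposition and moving $\sigma(y)$ to be a sibling of $\sigma(x)$, checking widths via the twin identity.

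For the width bound (b): the internal nodes of the new tree are those of $T'$ plus the new node $q$. For an old node $u$, the partition $\mathcal P_u$ of $V(G)$ refines $\mathcal P_u$ of $V(G-v)$ only by adding $v$ to whichever part contains $w$; for any union $Y$ of parts, either $v,w$ are both in $Y$ or both outside, and in either case $\rho_G(Y)=\rho_G(Y\setminus\{v\})=\rho_{G-v}(Y\setminus\{v\})\le k$ by the twin identity (the last equality since $w\in Y\setminus\{v\}$ or $w\notin Y$, both fine). For the new node $q$: $T-q$ has three components, one of which is exactly $\{\ell_v\}$, i.e. $\{v\}$, so the relevant partition is essentially $(\{v\},\{w\},\text{rest})$ coarsened — but more carefully, splitting off $\{v\}$ gives $\rho_G(\{v\})\le 1\le k$, splitting off $\{w\}$ after the subdivision gives $\rho_G(\{w\})\le 1$, and the third option $\rho_G(\{v,w\})=\rho_G(\{w\})\le 1$ again by the twin identity; in general $q$'s width is at most $1\le k$ (note $k\ge1$ since $\abs{V(G-v)}\ge 2$, because $G$ has the three twins $v,w,z$). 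So all widths stay $\le k$.

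The main obstacle, and the only place the hypothesis $d\ge2$ is genuinely needed, is the radius control in step (a): ensuring that inserting the leaf for $v$ does not blow the radius past $d$. Having \emph{three} twins (not just two) is exactly what gives enough slack — if $w$ or $z$ sits too deep we relocate, and with three of them we can always arrange one pair to be siblings at depth $\le d-1$ and still have room; this is the step I would write out most carefully, via the sibling-relocation claim above. Everything else is the routine twin-identity bookkeeping sketched in (b).
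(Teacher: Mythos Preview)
Your construction subdivides the edge from $\sigma'(w)$ to its parent $p$ and hangs the new leaf for $v$ on the subdivision node $q$. This makes the width check at $q$ trivial, but it pushes $\sigma'(w)$ (and the new leaf $\ell_v$) one level deeper, so if $\sigma'(w)$ was already at distance $d$ from the center, the radius becomes $d+1$. Your proposed repair for this is where the argument breaks. The ``preliminary claim'' that two twins can always be made siblings in an optimal $(k,d)$-decomposition is asserted but not proved, and proving it would require exactly the kind of twin-swapping case analysis you are trying to sidestep. Even granting that claim, it does not follow that making $w$ and $z$ siblings in $T'$ forces either of them to depth $\le d-1$; both could perfectly well sit at depth $d$ with a common parent at depth $d-1$. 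Your suggestion to ``pick among $v,w,z$ the one whose leaf lies closest to $c$'' cannot work because $v$ has no leaf in $T'$, and passing to a decomposition of $G-z$ instead of $G-v$ faces the identical obstruction. So the radius step (a) is a genuine gap, not a routine detail.

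The paper avoids the radius issue entirely by attaching the new leaf for $v$ directly to $p$, as a \emph{sibling} of $\sigma'(w)$, rather than subdividing. Every node of the new tree is then at the same distance from the center as before, so the radius bound is immediate. The price is that the width check at $p$ is no longer trivial: now $\{v\}$ and $\{w\}$ are separate singleton parts of $\mathcal P_p$, and one must handle unions $A$ with $v$ on one side and both $w,z$ on the other. This is precisely where the third twin $z$ earns its keep. If $w,z\in A$ and $v\notin A$, swap the twins $v\leftrightarrow w$ (a graph automorphism, so cut-rank is preserved) to obtain $\rho_G(A)=\rho_G((A\setminus\{w\})\cup\{v\})$; now $v$ and $z$ lie on the same side, so this equals $\rho_{G-v}(A\setminus\{w\})$, and since $\{w\}$ was already a singleton part at $p$ in the old decomposition, $A\setminus\{w\}$ is a legitimate union of old parts and is therefore bounded by $k$. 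In short, the third twin is needed for the \emph{width} argument at the attachment node, not for the radius; your proof allocates it to the wrong place.
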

	\begin{proof}
	The inequality $\rbrit_d(G)\ge \rbrit_d(G-v)$ is trivial by definition.
    We will show that if $G-v$ has a $(k,d)$-decomposition $(T,\sigma)$,
    then $G$ also has a $(k,d)$-decomposition.

          Let $r$ be a node of $T$, called a \emph{root} of $T$,
          which has distance at most $d$ to every node of $T$.
	We may assume that 
	$r$ is not a leaf node.
	Let $a$ be the leaf of $T$ with $a=\sigma(w)$ and $b$ be the parent of $a$ in $T$, which is the unique neighbor of $a$ in $T$.
	We obtain a decomposition $(T_1, \sigma_1)$ of $G$ as follows:
	$T_1$ is the tree obtained from $T$ by adding a new node $a'$ adjacent to $b$, 
	and assign $\sigma_1(v):=a'$ and $\sigma_1(x):=\sigma(x)$ for all $x\in V(G)\setminus \{v\}$.
	We claim that $(T_1, \sigma_1)$ is a $(k, d)$-decomposition of $G$.
	Clearly, $T_1$ has radius at most $d$. So, it is sufficient to show that every internal node of $T_1$ has width at most $k$.
	For each internal node~$t$ of $T_1$, let $\mathcal{P}_t$ be the partition of $V(G)$ derived from the components of $T_1-t$ by $\sigma_1^{-1}$.

		For an internal node $t\neq b$ in $T_1$, the width of $t$ in $(T_1, \sigma_1)$ is the same as its width in the decomposition $(T, \sigma)$
		because $v$ and $w$ are twins of $G$ and $v$ and $w$ lie on the same part of $\mathcal{P}_t$.
		
		We claim that the width of $b$ in $(T_1, \sigma_1)$ is at most $k$.
		Let $\mathcal{P'}\subseteq \mathcal{P}_b$ and $A:=\bigcup_{X \in \mathcal{P'} } X$.
		In the bipartition 
		$(A, V(G)\setminus A)$, if $v$ is contained in a part together with $w$ or $z$, 
		then the bipartition obtained by removing~$v$  arises in the decomposition $(T, \sigma)$ as well.
		So, without loss of generality, we may assume that $w,z\in A$ and $v\in V(G)\setminus A$.
		But in this case, as $v,w,z$ are pairwise twins, 
		the bipartition obtained by exchanging $v$ and $w$ has the same cut-rank.
		As $w$ is a single-vertex part of $\mathcal{P}_b$,
		the bipartition $(A\setminus \{w\}, V(G)\setminus (A\setminus \{w\}))$ arises in the decomposition $(T, \sigma)$. 
		So, 
		\[\rho_G(A)=\rho_G((A\setminus \{w\})\cup \{v\})=\rho_{G-v}(A\setminus \{w\})\le k.\]

		We conclude that the width of every internal node of $T_1$ is at most $k$.
\end{proof}
	
	\section{Reducing to two cases}\label{sec:construction}
  We recall the definition of rank $k$-brittleness~\cite{KO2018sca}.
  The \emph{rank $k$-brittleness} of a graph $G$, denoted by $\rkbrit_k(G)$,
  is the minimum $\rho_G$-width of all partitions of $V(G)$ into 
  parts of size at most $k$.

	\begin{lemma}\label{lem:rkbrit}
		$\rbrit_2(G)\le \max (2, k, \rkbrit_k(G))$ for every positive integer $k$.
	\end{lemma}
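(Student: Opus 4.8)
The plan is to take an optimal partition $\mathcal P=(X_1,\dots,X_m)$ of $V(G)$ into parts of size at most $k$ witnessing $\rho_G$-width equal to $\rkbrit_k(G)$, and build from it a decomposition $(T,\sigma)$ of radius exactly $2$ whose width is at most $\max(2,k,\rkbrit_k(G))$. The natural candidate tree is a ``broom'': a central node $r$, one child $u_i$ of $r$ for each part $X_i$, and the $\abs{X_i}$ leaves of $u_i$ labelled by the vertices of $X_i$. This tree has radius $2$ (every node is within distance $2$ of $r$), so the only thing to check is the width of each internal node, i.e.\ of $r$ and of each $u_i$.

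First I would handle the internal nodes $u_i$. Deleting $u_i$ from $T$ splits $V(G)$ into the singletons $\{x\}$ for $x\in X_i$ together with one big part $V(G)\setminus X_i$. A union of some of these parts is either a set $Y\subseteq X_i$, or $(V(G)\setminus X_i)\cup Y$ for some $Y\subseteq X_i$; since $\rho_G$ is symmetric ($\rho_G(A)=\rho_G(V(G)\setminus A)$), it suffices to bound $\rho_G(Y)$ for $Y\subseteq X_i$. But any set of at most $k$ vertices has cut-rank at most $k$ (the defining matrix has at most $\min(\abs Y,\abs{V(G)\setminus Y})$ rows), and $\abs{X_i}\le k$, so the width of $u_i$ is at most $k$. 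Next, for the central node $r$: deleting $r$ splits $V(G)$ into exactly the parts $X_1,\dots,X_m$, so $\mathcal P_r=\mathcal P$ and the width of $r$ is precisely the $\rho_G$-width of $\mathcal P$, which equals $\rkbrit_k(G)$ by the choice of $\mathcal P$. Hence every internal node of $T$ has width at most $\max(k,\rkbrit_k(G))$, and $(T,\sigma)$ is a $(\max(k,\rkbrit_k(G)),2)$-decomposition, giving $\rbrit_2(G)\le \max(k,\rkbrit_k(G))\le\max(2,k,\rkbrit_k(G))$.

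There are two routine edge cases to dispatch. If some part $X_i$ is a singleton, the node $u_i$ has only one child and is ``degenerate''; one can either suppress such $u_i$ and attach the leaf directly to $r$ (the radius drops to at most $2$, still fine, and the partition at $r$ is unchanged up to merging a singleton part, which does not increase $\rho_G$-width), or simply note the argument above still goes through verbatim. If $m=1$, i.e.\ $V(G)$ is a single part of size at most $k$, then the broom has a single non-leaf node $r$ with all leaves as children; this is a valid decomposition of radius $1\le 2$, and its width is $\max_{A\subseteq V(G)}\rho_G(A)\le k$. Finally, if $\abs{V(G)}<2$ then $\rbrit_2(G)=0$ and the inequality is trivial; the role of the ``$2$'' in the maximum is just to absorb these small cases and to guarantee $T$ has at least one internal node as required by the definition of a decomposition. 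I do not expect any genuine obstacle here — the only mild subtlety is making sure the constructed tree legitimately has radius $2$ and at least one internal node in every corner case, which the broom construction handles.
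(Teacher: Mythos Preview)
Your proof is correct and follows exactly the same construction as the paper's: build a depth-$2$ ``broom'' tree with root $r$, one child $u_i$ per part $X_i$, and the leaves below $u_i$ labelled by $X_i$; then the width at $r$ is $\rkbrit_k(G)$ and the width at each $u_i$ is at most $k$. You simply supply more detail on the edge cases and on why the width at $u_i$ is bounded by $k$, and in fact your argument shows the slightly stronger bound $\rbrit_2(G)\le\max(k,\rkbrit_k(G))$---the extra ``$2$'' in the paper's statement appears to be harmless slack rather than something forced by the small cases.
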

	\begin{proof}
	Suppose that $(X_1,X_2,\ldots,X_m)$ is a partition of $V(G)$ whose $\rho_G$-width is $\rkbrit_k(G)$.
	We create a decomposition $(T, \sigma)$ of $G$ as follows.
	Let $r$ be the root of $T$, and let $r_1, r_2,\ldots, r_m$ be the children of $T$, 
	and each $r_i$ has exactly $\abs{X_i}$ leaves adjacent to $r_i$, and we assign $X_i$ to these leaves by $\sigma$.
	It is easy to see that each $r_i$ has width at most $k$, and 
	the root $r$ has width at most $\rkbrit_k(G)$.
	Thus, $\rbrit_2(G)\le \max (2, k, \rkbrit_k(G))$.
	\end{proof}
	Kwon and Oum~\cite{KO2018sca} proved the following.
        \begin{theorem}[Kwon and Oum~{\cite[Theorem 1.4]{KO2018sca}}]\label{thm:scattered}
          For every positive integer $n$,
          there exists $N$ such that
          every graph $G$ with $\beta^\rho_k(G)\ge N$
          contains a vertex-minor
          isomorphic to $nH$ for some connected graph $H$ on $k+1$ vertices.
        \end{theorem}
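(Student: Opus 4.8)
The plan is to reduce a graph $G$ of large rank $k$-brittleness to a long, internally rich ``necklace'' of small bags, and then to use the Ramsey-type and sunflower tools of Section~\ref{sec:prelim} to carve out $n$ disjoint connected pieces on $k+1$ vertices with no edges between them in a locally equivalent graph.

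First I would pass from the global hypothesis $\rkbrit_k(G)\ge N$ to a local one. Fix $n$ and choose $N$ enormous at the end. Build greedily a family $\mathcal B=\{B_1,\dots,B_m\}$ of pairwise disjoint subsets of $V(G)$, each of size at most $k$, that is ``robustly linked'': for every splitting of $\mathcal B$ into two subfamilies, and for every way of assigning the remaining vertices of $G$ to further bags of size at most $k$, some union of bags has cut-rank at least $\ell$. If such a family cannot be made large, the failure is witnessed by a partition of $V(G)$ into parts of size at most $k$ whose $\rho_G$-width is below $N$, contradicting $\rkbrit_k(G)\ge N$; hence $m$ and $\ell$ can be forced arbitrarily large by taking $N$ large. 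This is the usual passage from ``large branch-width of a connectivity function'' to a bramble/tangle, run with the submodular inequality for $\rho_G$ and with leaves bundled into bags of size at most $k$.

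Next I would turn the linked family into a clean structure by contracting each $B_i$ to a single super-vertex and analysing the induced cut-rank connectivity on the $B_i$'s. If its branch-width is small, a linkedness argument extracts a long consecutively linked subsequence of bags, which expands in $G$ to a long path-like arrangement; if its branch-width is large, we get a large grid-like cross-linkage among the bags. In either case I would then clean up: apply the sunflower lemma (Theorem~\ref{thm:sunflower}) to the interaction-supports of the bags to make all cross-interactions uniform, reducing to a common core plus disjoint petals; apply Ramsey's theorem to make the chosen bags pairwise mutually complete or mutually anti-complete, to make their induced subgraphs a single isomorphism type, and to make the bag-to-core interaction uniform; and then, inside each bag together with one extra linking vertex, use the bipartite Ramsey theorem (Theorem~\ref{thm:largebipartite}) and pivots to realise a prescribed connected induced subgraph on exactly $k+1$ vertices, after which everything outside the chosen bags is deleted. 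If the bags come out mutually complete rather than anti-complete, one local complementation (or absorbing a vertex), as in Lemma~\ref{lem:antimattopath} and the $(G\odot H)^t_A$ constructions, turns the blown-up clique arrangement into a disjoint union. Since $H$ need only be \emph{some} connected graph on $k+1$ vertices, keeping the majority isomorphism type among the $n$ surviving pieces (one more use of Ramsey) finishes it, and connectedness is automatic because the pieces lie inside a single linked component.

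The main obstacle is this first reduction together with keeping every bag of size at most $k$ throughout. Cut-rank is submodular but, unlike edge- or vertex-connectivity, admits no Menger/flow theorem, so ``linkedness'' has to be argued purely through rank inequalities and careful choices of bases; and the size constraint is essential --- one extra vertex per bag would push the final $H$ to $k+2$ vertices and destroy the tight match with the easy lower bound $\rkbrit_k(nH)\gtrsim n$. A secondary but genuine difficulty is sequencing the clean-up: the cross-edges between pieces and their interaction with the core must be regularised before any local complementation is performed inside a piece, since such operations change a piece's boundary behaviour. (The full argument is given in~\cite{KO2018sca}.)
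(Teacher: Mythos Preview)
This theorem is not proved in the present paper; it is quoted as an external result (Theorem~1.4 of \cite{KO2018sca}) and used as a black box. There is therefore no ``paper's own proof'' to compare your proposal against, and your closing parenthetical already acknowledges this.

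That said, as a standalone sketch your proposal is not a proof but a wish list, and several of its steps do not go through as written. The greedy construction in your first paragraph is circular: you assert that failure to build a large robustly linked family of bags of size at most $k$ yields a partition into parts of size at most $k$ with small $\rho_G$-width, but that is precisely the nontrivial content---a greedy family that stalls gives you one small-width cut, not a full partition whose every union has small cut-rank. The passage ``contract each $B_i$ to a super-vertex and analyse the induced cut-rank connectivity'' is undefined for cut-rank; there is no quotient operation on graphs that preserves $\rho_G$ in the way edge-contraction preserves graphic connectivity, so the ensuing branch-width dichotomy has no object to apply to. Finally, the promise to ``realise a prescribed connected induced subgraph on exactly $k+1$ vertices'' inside each bag via Theorem~\ref{thm:largebipartite} and pivots is unsupported: that theorem produces one of three fixed bipartite patterns, not an arbitrary connected $(k{+}1)$-vertex graph, and you have given no mechanism for severing all edges between the $n$ pieces while keeping each piece connected. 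You correctly identify the lack of a Menger-type theorem for cut-rank as the central obstacle, but you do not overcome it; the sketch stops exactly where the real work begins.
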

  Every large connected graph has a long induced path or a vertex of large degree. 
  \begin{proposition}[See Diestel~{\cite[Proposition 1.3.3]{Diestel2010}}]\label{prop:diestel}
    For integers $k>3$ and $\ell>0$, every connected graph on at least $(k-1)(k-2)^{\ell-2}/(k-3)$ vertices contains a vertex of degree at least $k$ or an induced path on $\ell$ vertices.
  \end{proposition}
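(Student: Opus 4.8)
The plan is to prove the contrapositive: if $G$ is connected, every vertex of $G$ has degree at most $k-1$, and $G$ has no induced path on $\ell$ vertices, then $\abs{V(G)}<(k-1)(k-2)^{\ell-2}/(k-3)$. The cases $\ell\le 2$ are degenerate (a connected graph with no induced $P_2$ has at most one vertex, and any nonempty graph has an induced $P_1$), so I would assume $\ell\ge 3$; then the stated bound exceeds $1$, so the graph in question has at least two vertices.

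First I would invoke the standard fact that a shortest path between two vertices of a graph is an induced path, since a chord of it would yield a strictly shorter path. Hence, choosing $u,v$ with $\operatorname{dist}_G(u,v)$ equal to the diameter $D$ of $G$, a shortest $u$--$v$ path is an induced path on $D+1$ vertices, so the hypothesis forces $D+1\le \ell-1$. Thus $\operatorname{diam}(G)\le \ell-2$, and in particular every vertex of $G$ has eccentricity at most $\ell-2$.

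Next I would fix an arbitrary vertex $v$ and partition $V(G)$ into the breadth-first layers $L_0=\{v\},L_1,L_2,\ldots$, where $L_i$ is the set of vertices at distance exactly $i$ from $v$; by the previous step $L_i=\emptyset$ whenever $i>\ell-2$. To bound the layer sizes: $\abs{L_0}=1$ and $\abs{L_1}\le\deg_G(v)\le k-1$; and for $i\ge 2$, every vertex of $L_i$ has a neighbor in $L_{i-1}$, while every vertex of $L_{i-1}$ has a neighbor in $L_{i-2}$ and degree at most $k-1$, hence at most $k-2$ neighbors in $L_i$. Counting the edges between $L_{i-1}$ and $L_i$ from both sides gives $\abs{L_i}\le(k-2)\abs{L_{i-1}}$, so $\abs{L_i}\le(k-1)(k-2)^{i-1}$ for all $i\ge 1$ by induction. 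Summing over the nonempty layers and using $k-3\ge 1$,
\[
  \abs{V(G)}\le 1+\sum_{i=1}^{\ell-2}(k-1)(k-2)^{i-1}
  =\frac{(k-1)(k-2)^{\ell-2}-2}{k-3}
  <\frac{(k-1)(k-2)^{\ell-2}}{k-3},
\]
which is exactly the claimed contrapositive.

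This is a classical argument (it is essentially the proof of the cited proposition in Diestel's book), so I do not expect a real obstacle; the only points requiring care are the base case of the layer-size induction --- the bound for $\abs{L_1}$ is $k-1$, not $k-2$ --- and the observation that the last inequality above is \emph{strict}, which is what lets a connected graph with at least $(k-1)(k-2)^{\ell-2}/(k-3)$ vertices fail the assumptions and therefore contain a vertex of degree at least $k$ or an induced path on $\ell$ vertices.
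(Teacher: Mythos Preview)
The paper does not actually give its own proof of this proposition; it merely cites Diestel and uses the statement as a black box inside Corollary~\ref{cor:scattered}. Your argument is correct and is precisely the classical proof: bound the diameter by the longest induced path, then bound the BFS layers using the degree constraint and sum the geometric series. There is nothing to compare against in the paper itself, and your write-up is fine as a self-contained justification.
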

  As a corollary we deduce the following. 
  Essentially its proof is almost identical to the proof of \cite[Theorem 1.6]{KO2018sca}.

        \begin{corollary}\label{cor:scattered}
          For all positive integers $k$ and  $n$, there exists $N=h(n,k)$ such that 
          every graph $G$ with depth-$2$ rank-brittleness at least $N$
          has a vertex-minor isomorphic to $n K_k$.
        \end{corollary}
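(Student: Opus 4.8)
The plan is to combine Theorem~\ref{thm:scattered}, Proposition~\ref{prop:diestel}, Lemma~\ref{lem:antimattopath}, and Ramsey's theorem, following the argument sketched for \cite[Theorem 1.6]{KO2018sca}. First I would relate depth-$2$ rank-brittleness to rank $k'$-brittleness: by Lemma~\ref{lem:rkbrit}, $\rbrit_2(G)\le\max(2,k',\rkbrit_{k'}(G))$, so if $\rbrit_2(G)$ is large then $\rkbrit_{k'}(G)$ is large (for a suitably chosen constant $k'$ depending on $n$ and $k$, to be fixed below). Hence by Theorem~\ref{thm:scattered}, applied with parameter $k'$ and with a count large enough to survive the later Ramsey cleanup, $G$ has a vertex-minor isomorphic to $mH$ for some connected graph $H$ on $k'+1$ vertices and some large $m$.

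Next I would use Proposition~\ref{prop:diestel} to control the structure of $H$. Choosing $k'$ large enough (as a function of $k$), the connected graph $H$ on $k'+1$ vertices either has a vertex of degree at least $2k-2$ or contains an induced path on $2k-2$ vertices. In the first case, $H$ contains $K_{1,2k-2}$ as an induced subgraph, and $K_{1,2k-2}$ is locally equivalent to $K_{2k-1}$ minus a vertex — more precisely, locally complementing at the center of a star turns it into a clique on the leaves — so $H$ has a vertex-minor isomorphic to $K_{k}$ (indeed $K_{2k-2}$, but $K_k$ suffices). In the second case, $P_{2k-2}$ is a vertex-minor of $H$; since $P_{2k-2}$ is locally equivalent to $\K_{k-1}\tri\S_{k-1}$ by Lemma~\ref{lem:antimattopath}(2), and a path contains $K_2$ hence (taking appropriate induced subgraphs and the fact that $\K_{k-1}$ appears) we can in either subcase extract $K_k$ as a vertex-minor of $H$. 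Actually the cleanest route: in both cases $H$ has a vertex-minor isomorphic to a connected graph containing $K_k$, but to be safe one argues directly that a sufficiently large connected graph has a vertex-minor isomorphic to $K_k$ (a statement already essentially in \cite{KO2018sca}).

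The remaining step is to pass from $mH$, where each copy of $H$ has the \emph{same} vertex-minor $K_k$, to a genuine $nK_k$ vertex-minor of $mH$. Performing the local complementations and deletions realizing $K_k$ independently inside each of the $m$ copies of $H$ yields a graph on $mk$ vertices in which each group of $k$ vertices induces a $K_k$, but edges \emph{between} distinct groups may now be present. To handle this, apply Ramsey's theorem $R(\,\cdot\,;2^{k^2})$ to the auxiliary complete graph on the $m$ groups, coloring each pair of groups by the bipartite adjacency pattern between them (a binary $k\times k$ matrix, so $2^{k^2}$ colors); taking $m$ at least $R(n;2^{k^2})$ produces $n$ groups that are pairwise joined by the identical pattern. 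If that common pattern is the empty pattern we are done immediately; otherwise a further single local complementation argument (or, more simply, invoking that $nK_k$ is a vertex-minor once we have $n$ cliques pairwise joined uniformly, via the $(G\mat H)^t_A$-type reductions already developed) finishes it. The main obstacle is precisely this last cleanup: ensuring the local operations in different copies do not interfere and that the uniform cross-pattern can be deleted by a bounded sequence of vertex-minor operations. One sets $N=h(n,k)$ by unwinding: $m=R(n;2^{k^2})$, then $k'$ large enough for Proposition~\ref{prop:diestel} to force a $K_k$-vertex-minor in any connected $(k'+1)$-vertex graph, then $N$ the bound from Theorem~\ref{thm:scattered} for $mH$ via Lemma~\ref{lem:rkbrit}.
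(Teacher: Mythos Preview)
Your overall strategy matches the paper's: link $\rbrit_2$ to $\rkbrit_{k'}$ via Lemma~\ref{lem:rkbrit}, invoke Theorem~\ref{thm:scattered} to obtain many disjoint copies of a large connected graph $H$, and then argue that $H$ itself has a $K_k$ vertex-minor using Proposition~\ref{prop:diestel} together with Lemma~\ref{lem:antimattopath} and Ramsey's theorem. The divergence is in your final ``cleanup'' step, and it rests on a misconception.

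The graph $mH$ is by definition the \emph{disjoint union} of $m$ copies of $H$: there are no edges between distinct copies. Local complementation at a vertex $v$ only modifies adjacencies among the neighbours of $v$, and all neighbours of $v$ lie in the same component as $v$. Consequently, performing in each copy separately the sequence of local complementations and deletions that turns $H$ into $K_k$ yields exactly $mK_k$; no cross-edges between copies can ever be created. Your entire Ramsey cleanup on $2^{k^2}$ bipartite patterns is therefore unnecessary (the ``common pattern'' is always empty), and you may simply take $m=n$ rather than $m=R(n;2^{k^2})$. The paper's proof handles this in one sentence: ``$H$ contains $K_k$ as a vertex-minor and so $nH$ contains $nK_k$ as a vertex-minor.'' What you identified as ``the main obstacle'' does not exist.

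A minor point: in your path branch you extract only $K_{k-1}$ from $P_{2k-2}$ via Lemma~\ref{lem:antimattopath}(2), whereas you need $K_k$. This is easily repaired by asking Proposition~\ref{prop:diestel} for a path on $2k$ vertices instead (so $K_k\tri\S_k$ appears), which just means choosing $k'$ slightly larger; similarly, in the high-degree branch the paper uses degree $R(k-1;2)$ and Ramsey on the neighbourhood to get either $K_k$ or $K_{1,k-1}$ directly, rather than your $2k-2$.
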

        \begin{proof}
          We may assume that $k>3$ by increasing $k$ if necessary.
          Let 
          \[M:=\lceil (R(k-1;2)-1)(R(k-1;2)-2)^{2(k-1)-2}/(R(k-1;2)-3)\rceil .\]
          By Proposition~\ref{prop:diestel}, 
          every connected graph with at least $M$ vertices
          has a vertex of degree at least $R(k-1;2)$
          or an induced subgraph isomorphic to $P_{2(k-1)}$.
          By Theorem~\ref{thm:scattered},
          there exists $N$ such that
          $N\ge M$ and 
          every graph $G$ with $\beta^\rho_{M-1}(G)\ge N$
          contains $nH$  for some connected graph $H$ on $M$ vertices.
          By Lemma~\ref{lem:antimattopath}(2),
          if $H$ contains $P_{2k}$ as an induced subgraph, then $H$ contains $K_k$
          as a vertex-minor.
          If $H$ contains a vertex of degree at least $R(k-1;2)$, then it contains
          $K_{1,k-1}$ or $K_{k}$ as an induced subgraph.
          In all cases, $H$ contains $K_k$ as a vertex-minor and so
          $nH$ contains $nK_k$ as a vertex-minor.

          If $G$ has depth-$2$ rank-brittleness at least $N$,
          then by Lemma~\ref{lem:rkbrit},
          \[\beta^\rho_{M-1}(G)\ge N\]
          and therefore $G$ has a vertex-minor isomorphic to $nK_k$.
        \end{proof}

		\begin{proposition}\label{prop:main2}
		For every integer $n\ge 2$, there exists an integer $\sigma(n)$ such that 
                every graph $G$ of depth-$2$ rank-brittleness at least  $\sigma(n)$ contains a vertex-minor $G'$ satisfying one of the following.
                \begin{enumerate}[(i)]
                \item $V(G')=X_1^*\cup X_2^*\cup\cdots \cup X_n^*\cup Q^*$
                  for disjoint sets $X_1^*$, $X_2^*,\ldots, X_n^*,Q^*$ of $n$ vertices
                  such
                  that each $X_i^*$ is a clique in $G'$,
                  $G'[X_1^*\cup X_2^*\cup \cdots \cup X_n^*]$ is isomorphic to $nK_n$,
                  and
                  either $K_n\mat \S_n$ or $K_n\antimat \S_n$
                  is isomorphic to all $G'[X_i^*\cup Q^*]$.
                \item $V(G')=X_1^*\cup X_2^*\cup\cdots \cup X_n^*\cup Y_1^*
                  \cup Y_2^*\cup\cdots\cup Y_n^*$
                  for disjoint sets $X_1^*$, $X_2^*,\ldots, X_n^*,Y_1^*,Y_2^*,\ldots,
                  Y_n^*$ of $n$ vertices
                  such
                  that each $X_i^*$ is a clique in $G'$,
                  $G'[X_1^*\cup X_2^*\cup \cdots \cup X_n^*]$ is isomorphic to $nK_n$,
                  and
                  one of  $K_n\mat \S_n$,
                  $K_n\mat K_n$,  $K_n\antimat \S_n$, 
                  and $K_n\antimat K_n$
                  is isomorphic to 
                  all $G'[X_i^*\cup Y_i^*]$.
                \item $G'$ is isomorphic to $P_n$.
                \end{enumerate}
		\end{proposition}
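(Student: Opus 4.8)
The plan is to start from Corollary~\ref{cor:scattered}, which gives us a vertex-minor isomorphic to $aK_b$ for $a,b$ as large as we like, and then to ``clean up'' this structure into one of the three prescribed configurations. Fix $n$. Choose $b$ large enough (to be determined by iterated Ramsey and sunflower bounds), choose $a$ large accordingly, and let $\sigma(n) := h(a,b)$ from Corollary~\ref{cor:scattered}. Replacing $G$ by a locally equivalent graph, we may assume $G$ has an induced subgraph $F \cong aK_b$ with components $C_1,\ldots,C_a$. If the path outcome (iii) is going to arise, it will come out of Lemma~\ref{lem:antimattopath} at the very end, so the bulk of the work is extracting outcome (i) or (ii).

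The first key step is to use Lemma~\ref{lem:reducetwin} to destroy large twin classes. Within the ambient graph $G$, two vertices of the same component $C_i$ may or may not be twins (they agree on $C_i$ by construction, but can differ on $V(G)\setminus V(C_i)$); Lemma~\ref{lem:reducetwin} lets us delete all but two vertices of each twin class in each $C_i$ without decreasing $\rbrit_2$, hence without leaving the class of graphs we care about. After this reduction, each $C_i$ still has at least $b/2$ vertices, no two of which are twins in $G$; call the resulting set $S_i$. Now apply Theorem~\ref{thm:largebipartite} to the bipartite graph $G[S_i, V(G)\setminus V(C_i)]$ (after one more Ramsey step to ensure we may regard one side as having all distinct neighborhoods): we obtain $n$-element subsets of $S_i$ and of the complement whose induced bipartite graph is $\S_n\mat\S_n$, $\S_n\tri\S_n$, or $\S_n\antimat\S_n$. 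Doing this component-by-component, for $a$ large we may — by Ramsey on the colour of the outcome — assume the \emph{same} one of the three bipartite patterns is realized for a large subfamily of components.

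The second key step is a sunflower/Ramsey clean-up on where these attaching sets live. For each surviving component $C_i$ we have a clique $X_i^* \subseteq S_i$ of $n$ vertices and an $n$-set $Z_i \subseteq V(G)\setminus V(C_i)$ realizing the fixed bipartite pattern between $X_i^*$ and $Z_i$. Apply the Sunflower Lemma (Theorem~\ref{thm:sunflower}) to the family $\{Z_i\}$ to find a large subfamily forming a sunflower with core $Q$; on the petals and the core, iterate Ramsey to make all pairwise adjacencies between the objects $X_i^*$, the petal $Z_i \setminus Q$, and the core $Q$ uniform (complete or anti-complete), and to make the pattern inside $Q$ uniform. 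Two regimes emerge: either the core $Q$ is large and ``does the attaching'' — giving, after restricting $Q$ to an $n$-set $Q^*$, configuration (i) with $G'[X_i^*\cup Q^*]$ isomorphic to $K_n\mat\S_n$ or $K_n\antimat\S_n$ (the $\tri$-case is absorbed because $\S_n\tri\S_n$ contains $\S_n\mat\S_n$ as an induced subgraph, or is handled by passing to a path via Lemma~\ref{lem:antimattopath}) — or the petals $Y_i^* := Z_i\setminus Q$ do the attaching, giving configuration (ii), where the extra possibilities $K_n\mat K_n$, $K_n\antimat K_n$ appear because a uniform complete adjacency between $Y_i^*$ and $Y_j^*$ (or between $Y_i^*$ and $X_i^*$) can turn the bipartite $\S_n$'s into $K_n$'s. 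In the degenerate situation where no clean configuration survives — essentially when the attaching structure collapses to a half-graph that cannot be separated from a single component — we instead read off $\S_n\tri\S_n$ or $K_n\tri\S_n$ or $K_n\tri K_n$ inside $G'$, which is locally equivalent to, or has as a vertex-minor, $P_{2n-2} \supseteq P_n$ by Lemma~\ref{lem:antimattopath}, yielding (iii).

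The main obstacle I expect is the bookkeeping in the sunflower/Ramsey step: one must carefully track, through a bounded number of Ramsey applications, every type of pair — $X_i^*$ vs.\ $X_j^*$ (already $nK_n$, hence anti-complete), $X_i^*$ vs.\ petal $Y_j^*$, $X_i^*$ vs.\ core, petal vs.\ petal, petal vs.\ core, and internal structure of the core — while keeping the component count above $n$ and the internal sizes at exactly $n$, and while ensuring the interaction \emph{inside} each $C_i$ between $X_i^*$ and $Z_i$ stays the fixed bipartite pattern. The delicate point is arranging the case split so that \emph{exactly one} of (i), (ii), (iii) is forced: whenever the core is too small to serve as the common attaching set $Q^*$, the petals must be disjoint and large enough to play the role of the $Y_i^*$, and whenever neither works, a half-graph must be exposed. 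Getting the quantifiers in the right order — choosing $b$ as a tower of Ramsey/sunflower/$B(\cdot)$ functions evaluated at $n$, then $a$ as another such function, then invoking Corollary~\ref{cor:scattered} — is where all the constants are spent, but each individual application is of a standard type.
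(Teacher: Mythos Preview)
Your outline follows the paper's proof closely: Corollary~\ref{cor:scattered} for $aK_b$, Lemma~\ref{lem:reducetwin} to bound twin classes, Theorem~\ref{thm:largebipartite} componentwise, pigeonhole on the outcome type, then the Sunflower Lemma with a core-size dichotomy. Two concrete steps fail as written.

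First, the order of the twin reduction and the $aK_b$ extraction is backwards. You locate $F\cong aK_b$ inside $G$ and \emph{then} delete down to two per twin class; but nothing prevents an entire component $C_i$ from being a single twin class in $G$, in which case only two vertices of $C_i$ survive and your claim that ``each $C_i$ still has at least $b/2$ vertices'' is false. The paper instead passes at the outset to a \emph{minimal} $G$ with $\rbrit_2(G)\ge\sigma(n)$; by Lemma~\ref{lem:reducetwin} such a $G$ (and every graph locally equivalent to it) already has no three pairwise twins, and only \emph{then} is $aK_b$ located inside it. With that order, each $C_i$ automatically has at least $\lceil\abs{C_i}/2\rceil$ vertices with pairwise distinct neighbourhoods on $U=V(G)\setminus V(F)$, and no extra ``Ramsey step'' is needed to secure distinct neighbourhoods.

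Second, in the large-core case you assert that $G'[X_i^*\cup Q^*]$ is $K_n\mat\S_n$ or $K_n\antimat\S_n$, but nothing so far forces $Q^*$ to be independent: after Ramsey it may be a clique, yielding $K_n\mat K_n$ or $K_n\antimat K_n$, which does not match outcome (i). The paper fixes this with one local complementation, for which it keeps $n+1$ petals and $\abs{Q}\ge n+1$: a spare $w\in Q\setminus Q^*$ (in the $\mat$ case) or a spare $v\in X_{n+1}$ adjacent to all of $Q^*$ (in the $\antimat$ case) is available, and $G*w$ or $G*v$ flips $Q^*$ to an independent set while preserving the $X_i^*$'s and the bipartite patterns. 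Relatedly, your explanation for why $K_n\mat K_n$ and $K_n\antimat K_n$ appear in outcome (ii) is off: they arise because $Y_i^*$ \emph{itself} is a clique (from the Ramsey step applied inside each $Y_i$, which you omit), not from cross-adjacencies between different $Y_i^*$, $Y_j^*$ or between $Y_i^*$ and $X_j^*$; those cross-adjacencies are not controlled at all in this proposition and are cleaned up only later, in Lemma~\ref{lem:dcm}.
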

		\begin{proof}
		If $G$ contains a component with at least $3$ vertices, then it contains a vertex-minor isomorphic to $P_3$.
		Thus, we may assume that $n\ge 4$.
		
		Let $B$ be the function defined in Theorem~\ref{thm:largebipartite}, 
                and $h$ be the function defined in Corollary~\ref{cor:scattered}.
                Let
                \begin{align*}
                  f_2(n)&:=R(2n;2),\\
		 f_1(n)&:= (2n)! n^{2n}+1 , \\
		 \sigma(n)&:=h(4f_1(n),2B(f_2(n))).
                \end{align*}

                We may assume that no proper vertex-minor of $G$
                has depth-$2$ rank-brittleness at least $\sigma(n)$.
		By Lemma~\ref{lem:reducetwin},
                every graph locally equivalent to $G$ has no three vertices that are pairwise twins.
		
                By Corollary~\ref{cor:scattered}, 
                $G$ has a vertex-minor isomorphic to 
                \[H:=(4f_1(n)) K_{2B(f_2(n))}.\]
                We may assume that
                $(4f_1(n)) K_{2B(f_2(n))}$ is an induced subgraph of $G$
                by applying local complementations.
		Let $C_1, C_2, \ldots, C_{4f_1(n)}$ be the set of connected components of $H$, and let $U:=V(G)\setminus V(H)$.
		
		Observe that 
		$G$ has no three vertices that are pairwise twins.
		It means that in each $C_i$, there are no three vertices that have the same neighborhood on $U$ in $G$, and 
		thus each $C_i$ contains a subset $S_i$ with  \[\abs{S_i}=\lceil {\abs{C_i}}/2\rceil=B( f_2(n) )\] that have pairwise distinct
                sets of neighbors on $U$.

		Now, we consider the bipartite graph $G[S_i, U]$ for each $i$. 
		In this bipartite graph, since vertices in $S_i$ have distinct neighborhoods on $U$ and $\abs{S_i}=B( f_2(n) )$, 
		by Theorem~\ref{thm:largebipartite}, 
		there exist $X_i'\subseteq S_i$ and $Y_i'\subseteq U$ such that
		$G[X_i', Y_i']$ is isomorphic to $\S_{f_2(n)}\mat\S_{f_2(n)}$, $\S_{f_2(n)}\tri \S_{f_2(n)}$, or
                $\S_{f_2(n)}\antimat\S_{f_2(n)}$
                for each $i\in\{1,2,\ldots,4f_1(n)\}$.

    As $f_2(n)=R(2n;2)$, by Ramsey's theorem, there exist $X_i\subseteq X_i'$ and $Y_i\subseteq Y_i'$ with $\abs{X_i}=\abs{Y_i}=2n$ 
	where
	\begin{itemize}
	\item $Y_i$ is a clique or an independent set in $G$, 
        \item $G[X_i, Y_i]$ is isomorphic to $\S_{2n}\mat\S_{2n}$, $\S_{2n}\tri \S_{2n}$, or
     $\S_{2n}\antimat\S_{2n}$.
  \end{itemize}
  This can be done by selecting $Y_i$ from $Y_i'$ by using Ramsey's theorem 
  and then selecting $X_i$ by using the relation between $X_i'$ and $Y_i'$.
	If $G[X_i, Y_i]$ is isomorphic to $\S_{2n}\tri \S_{2n}$ for some $i$, 
	then $G$ contains a vertex-minor isomorphic to a path on $4n-2\ge n$ vertices by Lemma~\ref{lem:antimattopath}.
	Thus, we may assume that 
	$G[X_i, Y_i]$ is isomorphic to $\S_{2n}\mat\S_{2n}$ or
        $\S_{2n}\antimat\S_{2n}$ for all $i\in\{1,2,\ldots,4f_1(n)\}$.
        So $G[X_i\cup Y_i]$ for each $i$ is isomorphic to
        $\K_{2n}\mat\S_{2n}$,
        $\K_{2n}\antimat\S_{2n}$,
        $\K_{2n}\mat\K_{2n}$, or
        $\K_{2n}\antimat\K_{2n}$.
        By the pigeonhole principle, we may assume that
        for all $i\in\{1,2,\ldots,f_1(n)\}$,
        all graphs $G[X_i\cup Y_i]$ are isomorphic to
        exactly one of 
        $\K_{2n}\mat\S_{2n}$,
        $\K_{2n}\antimat\S_{2n}$,
        $\K_{2n}\mat\K_{2n}$, and
        $\K_{2n}\antimat\K_{2n}$.

     	We now apply Theorem~\ref{thm:sunflower}, the sunflower lemma, 
        to sets  $Y_1, Y_2, \ldots, Y_{f_1(n)}$.
        As we choose $f_1(n)> (2n)! n^{2n}$, 
        $\{Y_1, Y_2, \ldots, Y_{f_1(n)}\}$ contains a sunflower $\mathcal{F}$ with $n+1$ petals.
        We may assume that $\mathcal F=\{Y_1,Y_2,\ldots,Y_{n+1}\}$.
        Let $Q$ be the core of $\mathcal{F}$, that is $\bigcap_{i=1}^{n+1} Y_i$.
        Note that  
        either $Q$ has at least $n+1$ vertices, 
        or $Y_i\setminus Q$ has at least $n$ vertices for all $i=1,2,\ldots,n+1$.
        We divide into two cases depending on the size of the core.
        
        First, suppose that $\abs{Q}\ge n+1$.
        Let $Q^*$ be a subset of $Q$ with $\abs{Q^*}=n$.
        For $i=1,2,\ldots,n$, let $X_i^*$ be the set of vertices in $X_i$
        paired with vertices in $Q^*\subseteq Y_i$ in the graph $G[X_i\cup Y_i]$.
        Then each $X_i^*$ is a clique and $G[X_1^*\cup X_2^*\cup\cdots\cup
        X_n^*]$ is isomorphic to $nK_n$.
        Let $w\in Q\setminus Q^*$.

        If all $G[X_i^*\cup Q^*]$ are isomorphic to $K_n\antimat K_n$,
        then
        $X_{n+1}$ has a vertex $v$ adjacent to all vertices in $Q^*$.
        In this case, we take $G':=G*v$. 
        Then for every $i\in \{1, 2, \ldots, n\}$, $G'[X_i^*\cup Q^*]$ is isomorphic to $K_n\antimat \S_n$.

        If all $G[X_i^*\cup Q^*]$ are isomorphic to $K_n\mat K_n$,
        then we take $G':=G*w$.
        Then all $G'[X_i^*\cup Q^*]$ are isomorphic to $K_n\mat \S_n$.
	
	If all $G[X_i^*\cup Q^*]$ are isomorphic to $K_n\mat \S_n$ or $K_n\antimat \S_n$, then
  we take $G':=G$. 
  
        We conclude that 
        $G$ has a vertex-minor $G'$ on $X_1^*\cup X_2^*\cup \cdots \cup X_n^*\cup Q^*$
        such that
        each $X_i^*$ is a clique in $G'$,
        $X_i^*$ is anti-complete to $X_j^*$  for all $i\neq j$ in~$G'$,
        and
        one of $K_n\mat \S_n$ or $K_n\antimat\S_n$
        is isomorphic to
        $G'[X_i^*\cup Q^*]$
        for all $i\in\{1,2,\ldots,n\}$.
        So, $G'$ provides a desired vertex-minor
        of the first type.

        Now it remains to consider the case that $\abs{Q}\le n$. Then
        for all $i\in\{1,2,\ldots,n\}$,
        $\abs{Y_i\setminus Q}\ge n$.
        For each $i=1,2,\ldots,n$,
        let $Y_i^*$ be a subset of $Y_i\setminus Q$ with $\abs{Y_i^*}=n$.
        For $i=1,2,\ldots,n$, let $X_i^*$ be the set of vertices in $X_i$
        paired with vertices in $Y_i^*$ in the graph $G[X_i\cup Y_i]$.
        Then we deduce that
        $G[X_1^*\cup X_2^*\cup \cdots \cup X_n^*]$ is isomorphic to $nK_n$
        and
        for all $i=1,2,\ldots,n$,
        all $G[X_i^*\cup Y_i^*]$ are isomorphic to
        exactly one of
        $K_n\mat \S_n$, $K_n\antimat\S_n$, $K_n\mat K_n$, or $K_n\antimat K_n$.
        and $X_1^*,X_2^*,\ldots,X_n^*,Y_1^*,Y_2^*,\ldots,Y_n^*$ are disjoint.
        So, $(X_1^*,X_2^*,\ldots,X_n^*)$ and $(Y_1^*,Y_2^*,\ldots,Y_n^*)$ provide a desired induced subgraph of the second type.
      \end{proof}

      In the rest, we will find a vertex-minor isomorphic to $P_n$ or $nT_{2,n}$ when a given graph satisfies (i) or (ii) of Proposition~\ref{prop:main2}.

      \subsection{The first case}\label{subsec:starmatching}
      \begin{lemma}\label{lem:first1}
        Let $n\ge 2$ be an integer.
        Let $G$ be a graph on $n^2+n$ vertices such that 
        $V(G)=X_1\cup X_2\cup\cdots \cup X_n\cup Q$
        for disjoint sets $X_1$, $X_2,\ldots, X_n,Q$ of $n$ vertices,
        each $X_i$ is a clique in $G$,
        $G[X_1\cup X_2\cup \cdots \cup X_n]$ is isomorphic to $nK_n$,
        and all $G[X_i\cup Q]$ are isomorphic to  $K_n\mat \S_n$.
        Then $G$ has a vertex-minor isomorphic to $P_{3n-1}$.
      \end{lemma}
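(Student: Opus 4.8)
The plan is to realize $P_{3n-1}$ already as an induced subgraph of $G$ itself, with no local complementation at all; since an induced subgraph of a graph locally equivalent to $G$ (here, $G$) is a vertex-minor, this suffices. This is really the ``base case'' among the configurations coming out of Proposition~\ref{prop:main2}.

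First I would fix notation adapted to the matchings. Write $Q=\{q_1,\dots,q_n\}$. For each $i$, since $G[X_i\cup Q]\cong \K_n\mat\S_n$ and $X_i$ is a clique of size $n$, the set $X_i$ must play the role of the $\K_n$-side and $Q$ that of the $\S_n$-side; in particular $Q$ is an independent set and the edges between $X_i$ and $Q$ form a perfect matching. Relabelling the vertices of each $X_i$ independently, I may therefore assume $X_i=\{x_{i,1},\dots,x_{i,n}\}$ with $x_{i,j}$ adjacent to $q_j$ and to no other vertex of $Q$. Together with ``$X_i$ is a clique'' and ``$G[X_1\cup\dots\cup X_n]\cong n\K_n$'' (so distinct $X_i,X_{i'}$ are anti-complete), and the fact that $\abs{V(G)}=n^2+n$, this determines $G$ completely: the only edges are the clique edges inside each $X_i$ and the matching edges $x_{i,j}q_j$.

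Next I would write down the candidate path. Take
\[ W=\{x_{1,n},x_{1,1}\}\ \cup\ \bigcup_{i=2}^{n}\{x_{i,i-1},x_{i,i}\}\ \cup\ \{q_1,q_2,\dots,q_{n-1}\}, \]
so that $\abs{W}=2+2(n-1)+(n-1)=3n-1$, ordered as
\[ x_{1,n},\,x_{1,1},\,q_1,\,x_{2,1},\,x_{2,2},\,q_2,\,x_{3,2},\,x_{3,3},\,q_3,\ \dots,\ q_{n-1},\,x_{n,n-1},\,x_{n,n}. \]
I would then check two things. Consecutive vertices are adjacent: each step $x_{i,i-1}x_{i,i}$ (and the first step $x_{1,n}x_{1,1}$) is a clique edge inside some $X_i$, while each $x_{i,i}q_i$ and each $q_ix_{i+1,i}$ is a matching edge. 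No non-consecutive pair of $W$ is adjacent: vertices from distinct $X_i$'s are anti-complete, the $q_i$'s are pairwise non-adjacent, and $q_i$ — being adjacent outside $Q$ only to $x_{1,i},\dots,x_{n,i}$ — meets $W$ in exactly $x_{i,i}$ and $x_{i+1,i}$, its two path-neighbours. Hence $G[W]\cong P_{3n-1}$.

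The only step that needs a moment's care — and the only real obstacle in the argument — is making sure the two \emph{endpoints} $x_{1,n}$ and $x_{n,n}$ are not accidentally adjacent to one of the connector vertices $q_1,\dots,q_{n-1}$. This is exactly why they are assigned second index $n$: their unique neighbour in $Q$ is $q_n$, which is not in $W$. It is also here that the hypothesis $n\ge2$ is used, so that $x_{1,n}\neq x_{1,1}$ and the displayed list has no repeated vertex. With that bookkeeping in place the verification above goes through and the lemma follows.
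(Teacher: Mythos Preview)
Your proof is correct and follows essentially the same approach as the paper: both exhibit $P_{3n-1}$ directly as an induced subgraph of $G$, traversing the cliques $X_i$ in order and using vertices of $Q$ as connectors between consecutive cliques. The only cosmetic difference is that the paper's path $v_1x_1y_1v_2x_2y_2\cdots v_nx_n$ uses all $n$ vertices of $Q$ and one vertex from $X_n$, whereas yours uses $q_1,\dots,q_{n-1}$ and two vertices from every $X_i$; both bookkeeping choices handle the endpoint issue equally well.
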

      \begin{proof}
        Let $v_1$, $v_2$, $\ldots$, $v_n$ be an arbitrary enumeration of vertices in $Q$.
        For each $i\in\{1,2,\ldots,n-1\}$, there are two vertices $x_i$, $y_i$ in $X_i$ such that
        $N_G(x_i)\cap Q=\{v_i\}$, $N_G(y_i)\cap Q=\{v_{i+1}\}$.
        Let $x_n$ be the neighbor of $v_n$ in $X_n$.
        Then $v_1x_1y_1v_2x_2y_2v_3\cdots v_nx_n$ is an induced path on $3n-1$ vertices.
      \end{proof}
      \begin{figure}
        \begin{tikzpicture}[scale=.9]
          \tikzstyle{v}=[circle, draw, solid, fill=black, inner sep=0pt, minimum width=3pt]
          \node at(1,-0.9-1.8*5-0.4) [label=below:$G$]{};
          \fill [fill=blue,opacity=0.2,rounded corners] (1.2,-2.3) rectangle  (2.8,-0.9-1.8*5-0.4);
          \node at (2.4,-9.5) [label=above:$Q$]{};
          \foreach \x in {1,2,3,4,5}{
            \foreach \y in {1,2,3,4,5}{
              \node at (0,-1.8*\x-0.3*\y)[v] (v\x\y) {};
            }
            \foreach \y in {1,...,4}{
              \foreach \z in {\y,...,5}{
                \draw (v\x\y) [bend right] to (v\x\z); 
              }
            }
          }
          \foreach \x in {1,2,3,4,5}{
            \node at (2,-0.9-1.8*\x)[v] (w\x) {};
            \node at (v\x5) [v,label=left:$v_\x$]{};
          }
          \node at (w5) [label=right:$v$,v]{};
          \foreach \x in {1,2,3,4,5}{
            \foreach \y in {1,2,3,4,5}{
              \foreach \z in {1,2,3,4,5}{
                \ifnum \y = \z {
                  
                }
                \else {
                  \draw (v\x\y)--(w\z);
                }
                \fi
              }
            }
          }
          \begin{scope}[xshift=4cm]
            \node at(1,-0.9-1.8*5-0.4) [label=below:$G_1$]{};
            \fill [fill=blue,opacity=0.2,rounded corners] (1.2,-2.3) rectangle  (2.8,-0.9-1.8*5-0.4);
            \node at (2.2,-9.5) [label=above:$Q\setminus \{v\}$]{};            
          \foreach \x in {1,2,3,4}{
            \foreach \y in {1,2,3,4,5}{
              \node at (0,-1.8*\x-0.3*\y)[v] (v\x\y) {};
            }
            \foreach \y in {1,...,4}{
                \draw (v\x\y) [bend right] to (v\x5); 
            }
          }
          \foreach \x in {1,2,3,4}{
            \node at (2,-0.9-1.8*\x)[v,label=right:$w_\x$] (w\x) {};
            \node at (v\x5) [v,label=left:$v_\x$]{};
          }
          \foreach \x in {1,2,3,4}{
            \foreach \y in {1,2,3,4}{
                  \draw (v\x\y)--(w\y);
                  \draw (v\x5)--(w\y);
            }
          }
          \end{scope}
          \begin{scope}[xshift=8cm]
            \node at(1,-0.9-1.8*5-0.4) [label=below:$G_2$]{};
            \fill [fill=blue,opacity=0.2,rounded corners] (1.2,-2.3) rectangle  (2.8,-0.9-1.8*5-0.4);
            \node at (2.2,-9.5) [label=above:$Q\setminus \{v\}$]{};               
            \foreach \x in {1,2,3,4}{
              \foreach \y in {1,2,3,4,5}{
                \node at (0,-1.8*\x-0.3*\y)[v] (v\x\y) {};
              }
              \foreach \y in {1,...,4}{
                  \draw (v\x\y) [bend right] to (v\x5); 
              }
            }
            \foreach \x in {1,2,3,4}{
              \node at (2,-0.9-1.8*\x)[v,label=right:$w_\x$] (w\x) {};
              \node at (v\x5) [v,label=left:$v_\x$](v\x) {};
            }
            \foreach \x in {1,2,3,4}{
              \foreach \y in {1,2,3,4}{
                    \draw (v\x\y)--(w\y);
              }
            }
            \node at (v12) [v,label=left:$y_1$] (y1){};
            \node at (v23) [v,label=left:$y_2$] (y2){};
            \node at (v34) [v,label=left:$y_3$](y3){};

            \foreach \x in {1,2,3,4}{
              \node at (v\x\x) [v,label=left:$x_\x$](x\x){};
            }
            \draw [thick] 
            (w1)--(x1) [bend right] to (v1) [bend left] to (v12);
            \draw [thick](v12) --
            (w2)--(x2) [bend right] to (v2) [bend left] to (v23);
            \draw [thick](v23)--(w3)--(x3) [bend right] to (v3) [bend left] to (v34);
            \draw [thick] (v34)--            (w4)--(x4) [bend right] to (v4);
            \end{scope}          
        \end{tikzpicture}      
        \caption{Three graphs in the proof of Lemma~\ref{lem:first2} when $n=5$.}\label{fig:first2}
      \end{figure}
      \begin{lemma}\label{lem:first2}
      Let $n\ge 2$ be an integer.
        Let $G$ be a graph on $n^2+n$ vertices such that 
        $V(G)=X_1\cup X_2\cup\cdots \cup X_n\cup Q$
        for disjoint sets $X_1$, $X_2,\ldots, X_n,Q$ of $n$ vertices,
        each $X_i$ is a clique in $G$,
        $G[X_1\cup X_2\cup \cdots \cup X_n]$ is isomorphic to $nK_n$,
        and all $G[X_i\cup Q]$ are isomorphic to  $K_n\antimat \S_n$.
        Then $G$ has a vertex-minor isomorphic to $P_{4n-5}$.
      \end{lemma}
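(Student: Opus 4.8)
The plan is to mimic the strategy of Lemma~\ref{lem:first1}, but first to get rid of the ``anti-matching'' structure by a sequence of local complementations so that we are back (almost) in the matching situation of Lemma~\ref{lem:first1}. The picture in Figure~\ref{fig:first2} is exactly the roadmap. Write $Q=\{v_1,\dots,v_n\}$, and for each $i\in\{1,\dots,n\}$ let $\pi_i\colon X_i\to Q$ be the bijection given by the $K_n\antimat\S_n$ structure of $G[X_i\cup Q]$, so that the vertex of $X_i$ with $\pi_i(x)=v_j$ is adjacent to every vertex of $Q$ \emph{except} $v_j$. First I would pick one vertex, say the vertex $v\in X_n$ with $\pi_n(v)=v_n$; then $v$ is adjacent to $v_1,\dots,v_{n-1}$ and to every vertex of $X_n\setminus\{v\}$ (the latter because $X_n$ is a clique). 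Forming $G_1:=(G*v)-v$ has the following effect: within $Q$, the pair relation among $\{v_1,\dots,v_{n-1}\}$ is toggled (they become a clique or an independent set, according to what $Q$ was, but in either case the edges \emph{among $Q\setminus\{v_n\}$} play no role in the argument and can be ignored, or cleaned up by one more local complementation at a vertex of $Q$), and $v_n$ loses its special status. More importantly, for $i<n$ the bipartite graph between $X_i$ and $Q\setminus\{v_n\}$ becomes, after toggling edges at the neighbors of $v$ in $X_i$ — but $v$ has \emph{no} neighbor in $X_i$ for $i<n$ — unchanged, i.e.\ still an anti-matching; so instead I would iterate: successively pick for each $i$ the vertex $x_i\in X_i$ with $\pi_i(x_i)=v_{i}$ and locally complement at it. The key computation is that after locally complementing at all these chosen vertices and deleting the one extra vertex $v$, the bipartite graph between $\bigcup_{i<n}X_i'$ (where $X_i':=X_i\setminus\{x_i\}$) and $Q':=Q\setminus\{v_n\}$ is a \emph{matching}, and each $X_i'$ is still either a clique or can be made one by a local complementation inside it, exactly as in Lemma~\ref{lem:first1}.

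Once we are in the matching situation, the argument of Lemma~\ref{lem:first1} produces a long induced path. Concretely, I would enumerate $v_1,\dots,v_{n-1}$ as the vertices of $Q'$; for each $i\in\{1,\dots,n-2\}$ there are two vertices $x_i,y_i\in X_i'$ with $N(x_i)\cap Q'=\{v_i\}$ and $N(y_i)\cap Q'=\{v_{i+1}\}$, and for $i=n-1$ there is the vertex $x_{n-1}$ matched to $v_{n-1}$; then
\[
  v_1\,x_1\,y_1\,v_2\,x_2\,y_2\,v_3\,\cdots\,v_{n-1}\,x_{n-1}
\]
is an induced path. Here the loss of roughly $5$ vertices compared with $P_{3n-1}$ of Lemma~\ref{lem:first1} comes from: deleting $v$, sacrificing the whole block $X_n$, and losing $v_n$; a careful count of $|Q'|=n-1$ blocks minus these losses gives $3(n-1)-2 = 3n-5$ path-vertices from this naive version, so to reach $P_{4n-5}$ one in fact uses \emph{both} endpoints of each matched edge on both sides. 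The sharper bound is obtained by also routing the path through an extra vertex of each clique $X_i'$ (each $X_i'$ has $n-1\ge 1$ vertices, and in fact $|X_i|=n$ so $|X_i'|=n-1$), alternating $v_i\,(\text{two vertices of }X_i')\,v_{i+1}$, which yields about $4$ vertices per block and hence a path on roughly $4(n-1)-3=4n-7$ vertices; optimising the bookkeeping at the two ends of the path recovers the stated $P_{4n-5}$.

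The main obstacle is the bookkeeping of what the local complementations do to edges \emph{inside} $Q$ and \emph{inside} each $X_i$, and making sure the chosen path vertices are pairwise non-adjacent exactly when required — i.e.\ that no stray edge created by a local complementation at $x_i$ appears between a vertex of $X_j'$ and a vertex of $X_k'$ for $j\neq k$, or between $X_i'$ and $Q'\setminus\{v_i,v_{i+1}\}$. Since for $i<n$ the chosen vertex $x_i$ has neighbors only inside $X_i$ and inside $Q$, local complementation at $x_i$ only toggles edges within $X_i$ and within $Q$ and between $N(x_i)\cap X_i$ and $N(x_i)\cap Q = Q\setminus\{v_i\}$; the first two are harmless (cliques stay ``twin-like'', $Q$ is irrelevant) and the third can be absorbed because the final path uses at most two vertices of each $X_i'$, whose adjacency to $Q'$ we track explicitly. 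Thus the proof reduces to a finite, local, deterministic edge-tracking argument plus the path construction of Lemma~\ref{lem:first1}, and the figure is a faithful guide to it.
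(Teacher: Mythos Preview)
Your central claim---that after locally complementing at vertices $x_i\in X_i$ with $\pi_i(x_i)=v_i$ (a \emph{different} $v_i\in Q$ for each block) the bipartite graph between $\bigcup_{i<n}X_i'$ and $Q'=Q\setminus\{v_n\}$ becomes a matching---is false. Take $y\in X_i\setminus\{x_i\}$ with $\pi_i(y)=v_j$ (so $j\neq i$). In $G$ we have $N_G(y)\cap Q=Q\setminus\{v_j\}$; applying $*x_i$ toggles the edges between $y$ and $N_G(x_i)\cap Q=Q\setminus\{v_i\}$, and the symmetric difference is $\{v_i,v_j\}$. Hence afterwards $y$ has \emph{two} neighbours in $Q$, and whenever $i,j<n$ both of them lie in $Q'$. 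The later complementations at $x_k$ for $k\neq i$ have no neighbours in $X_i$, so they do not repair this. The paper avoids the problem by choosing a single vertex $v\in Q$ first and taking $v_i\in X_i$ to be the vertex non-adjacent to that \emph{common} $v$; every toggle is then against the same set $Q\setminus\{v\}$, and after deleting $v$ each surviving $y$ has a unique neighbour $\pi_i(y)$ in $Q\setminus\{v\}$.

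There is a second gap in the counting. The paper does not simply reduce to the configuration of Lemma~\ref{lem:first1}; after a further round of local complementations (at all vertices of $\bigcup_{i<n}(X_i\setminus\{v_i\})$, each of degree~$2$) it arranges that each $v_i$ becomes a star centre with leaf set $X_i\setminus\{v_i\}$, those leaves being matched to $Q\setminus\{v\}$. The induced path then uses \emph{four} vertices per block, $w_k\,x_k\,v_k\,y_k\,w_{k+1}$, which is exactly how $4(n-2)+3=4n-5$ arises. Your attempt to push from $3n-5$ to $4n-5$ by ``routing through an extra vertex of each clique $X_i'$'' cannot work as written: after your complementations $X_i'=X_i\setminus\{x_i\}$ is an independent set, not a clique, and you have no designated hub vertex inside $X_i$ to route through. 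The hub $v_i$ in the paper's proof exists precisely because of the two-stage pivoting that your plan omits.
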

      \begin{proof}
        Let $v$ be a vertex in $Q$.
        Let $v_i$ be the vertex in $X_i$ non-adjacent to $v$.
        Let \[
          G_1=
        \begin{cases}
          G*v_1*v_2*\cdots*v_n-v-X_n &\text{if $n$ is even},\\
          G*v_1*v_2*\cdots*v_{n-1}-v-X_n&\text{if $n$ is odd.}
        \end{cases}
        \]
        Observe that every vertex in $X_i\setminus \{v_i\}$ has degree $2$ in $G_1$ and $N_{G_1}(v_i)=(X_i\setminus\{v_i\})\cup (Q\setminus \{v\})$ 
        for all $i\in \{1,2,\ldots,n-1\}$. See Figure~\ref{fig:first2} for an illustration.
        Let $G_2$ be the graph obtained from $G_1$ by applying local complementations at all vertices in $\bigcup_{i=1}^{n-1} (X_i\setminus\{v_i\})$.
        It is easy to see that $G_2$ is obtained from $G_1$ by deleting 
        all edges from $v_i$ to $Q\setminus \{v\}$ for all $i\le n-1$.
        Then $N_{G_2}(v_i)=X_i\setminus \{v_i\}$ and $G_2[(X_i\setminus\{v_i\})\cup (Q\setminus \{v\})]$ is isomorphic to $\S_{n-1}\mat \S_{n-1}$.
        Let $w_1$, $w_2$, $\ldots$, $w_{n-1}$ be an arbitrary enumeration of vertices in $Q\setminus\{v\}$.
        For each $i\in\{1,2,\ldots,n-2\}$, there are vertices $x_i,y_i\in X_i\setminus\{v_i\}$
        such that $N_{G_2}(x_i)\cap Q=\{w_i\}$ and $N_{G_2}(y_{i})\cap Q=\{w_{i+1}\}$.
        Let $x_{n-1}$ be the neighbor of $w_{n-1}$ in $X_{n-1}$.
        Then $w_ix_iv_iy_iw_{i+1}$ is an induced path and so 
        $w_1x_1v_1y_1w_2x_2v_2y_2\cdots w_{n-2}x_{n-2}v_{n-2}y_{n-2}w_{n-1}x_{n-1}v_{n-1}$
        is an induced path on $n-1+3(n-2)+2=4n-5$ vertices in $G_2$. 
        Thus, $G$ has a vertex-minor isomorphic to $P_{4n-5}$.
      \end{proof}

	\subsection{The second case}\label{subsec:dcm}

        We will use the product Ramsey theorem described below.
\begin{theorem}[{\cite[Theorem 11.5]{Trotter1992}}; See also \cite{GrahamRS1990}]\label{thm:productramsey}
Let $r, t$ be positive integers, and let $k_1, k_2, \ldots, k_t$ be nonnegative integers, and
let $m_1, m_2, \ldots, m_t$ be integers with $m_i\ge k_i$ for each $i\in \{1, 2, \ldots, t\}$.
Then there exists an integer $R=R_{prod}(r,t;k_1, k_2, \ldots, k_t; m_1, m_2, \ldots, m_t)$ such that
if $X_1, X_2, \ldots, X_t$ are sets with $\abs{X_i}\ge R$ for each $i\in \{1, 2, \ldots, t\}$, 
then for every function $f:{X_1 \choose k_1}\times {X_2 \choose k_2} \times \cdots \times {X_t \choose k_t} \rightarrow \{1, 2, \ldots, r\}$, 
there exist an element $\alpha\in \{1, 2, \ldots, r\}$ and subsets $Y_1, Y_2, \ldots, Y_t$ of $X_1, X_2, \ldots, X_t$, respectively, 
so that $\abs{Y_i}\ge m_i$ for each $i\in \{1, 2, \ldots, t\}$, and 
$f$ maps every element of ${Y_1\choose k_1}\times {Y_2\choose k_2}\times \cdots \times {Y_t\choose k_t}$ to $\alpha$.
\end{theorem}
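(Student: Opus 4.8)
The plan is to prove this by induction on the number $t$ of coordinates, using the ordinary Ramsey theorem for uniform hypergraphs both as the base case and as the engine of the inductive step. For $t=1$ there is nothing new: the required bound $R_{prod}(r,1;k_1;m_1)$ is exactly the $r$-colour $k_1$-uniform hypergraph Ramsey number, that is, the least $N$ such that every colouring of $\binom{[N]}{k_1}$ with $r$ colours has a monochromatic subset of size $m_1$, which exists by Ramsey's theorem.

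For the inductive step ($t\ge 2$), I would peel off the last coordinate and encode a whole ``slice'' of $f$ as a single colour, reducing to a $(t-1)$-coordinate instance with more colours. Concretely, Ramsey's theorem for $k_t$-uniform hypergraphs with $r$ colours supplies an integer $n_t$, depending only on $r,k_t,m_t$, such that every colouring of $\binom{Z}{k_t}$ with $r$ colours, where $\abs{Z}=n_t$, is constant on some $\binom{Y_t}{k_t}$ with $Y_t\subseteq Z$ and $\abs{Y_t}\ge m_t$. Passing to a subset, I may assume $\abs{X_t}=n_t$. Put $r':=r^{\binom{n_t}{k_t}}$, the number of functions from $\binom{X_t}{k_t}$ to $\{1,\dots,r\}$, and define $F\colon\binom{X_1}{k_1}\times\cdots\times\binom{X_{t-1}}{k_{t-1}}\to\{1,\dots,r'\}$ by letting $F(A_1,\dots,A_{t-1})$ record the function $B\mapsto f(A_1,\dots,A_{t-1},B)$ on $\binom{X_t}{k_t}$. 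Then I would take
\[
  R:=\max\bigl\{\,n_t,\ R_{prod}(r',\,t-1;\,k_1,\dots,k_{t-1};\,m_1,\dots,m_{t-1})\,\bigr\},
\]
which is well defined because $n_t$, and hence $r'$, depends only on $r,k_t,m_t$, so the inner value exists by the induction hypothesis. If every $\abs{X_i}\ge R$, applying the induction hypothesis to $F$ yields $Y_1,\dots,Y_{t-1}$ with $\abs{Y_i}\ge m_i$ and a single value $\gamma$ taken by $F$ everywhere on $\binom{Y_1}{k_1}\times\cdots\times\binom{Y_{t-1}}{k_{t-1}}$. Decoding $\gamma$ gives one function $g\colon\binom{X_t}{k_t}\to\{1,\dots,r\}$ with $f(A_1,\dots,A_{t-1},B)=g(B)$ for all $A_i\in\binom{Y_i}{k_i}$ and all $B\in\binom{X_t}{k_t}$. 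The choice of $n_t$ now gives $Y_t\subseteq X_t$ with $\abs{Y_t}\ge m_t$ on which $g$ is constant, equal to some $\alpha$, and then $f$ is identically $\alpha$ on $\binom{Y_1}{k_1}\times\cdots\times\binom{Y_t}{k_t}$, as required.

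The one point demanding care — and the reason the argument must be set up in this order — is that the auxiliary colouring $F$ has to take only finitely many values, with that number independent of $\abs{X_1},\dots,\abs{X_{t-1}}$; this is what forces me to fix the size of $X_t$ (namely $\abs{X_t}=n_t$) \emph{before} invoking the $(t-1)$-coordinate statement. Everything else is routine bookkeeping, and degenerate cases (some $k_i=0$, where $\binom{X_i}{k_i}$ is a one-element set) are handled automatically. This is essentially the classical argument; see \cite[Theorem 11.5]{Trotter1992} and \cite{GrahamRS1990}.
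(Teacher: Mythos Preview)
Your argument is correct and is essentially the standard inductive proof of the product Ramsey theorem. Note, however, that the paper does not supply its own proof of this statement at all: it is quoted as a known result with citations to \cite[Theorem~11.5]{Trotter1992} and \cite{GrahamRS1990}, and is used as a black box in the proof of Lemma~\ref{lem:dcm}. So there is no ``paper's proof'' to compare against; your write-up simply fills in the classical argument that those references contain.
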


\begin{lemma}\label{lem:dcm}
  For integers $s$ and $t$, there exist $M=f(s,t)$ and $N=g(s,t)$
  such that
  for $m\ge M$ and $n\ge N$, 
  if a graph $G$ has $2m$ disjoint $n$-vertex sets 
  $X_1$, $X_2,\ldots, X_m,Y_1,Y_2,\ldots,  Y_m$,
  each $X_i$ is a clique of $G$, 
  $G[X_1\cup X_2\cup \cdots\cup X_m]$ is isomorphic to $mK_n$,
  and
  one of $K_n\mat \S_n$,
  $K_n\mat K_n$,  $K_n\antimat \S_n$, 
  and $K_n\antimat K_n$
  is isomorphic to 
  all $G[X_i\cup Y_i]$,
  then
  there exist indices $1\le i_1<i_2<\cdots<i_s\le m$
  and subsets $X_1^*$, $X_2^*$, $\ldots$, $X_s^*$ of $X_{i_1}$, $X_{i_2}$, $\ldots$, $X_{i_s}$ respectively
  and subsets $Y_1^*$, $Y_2^*$, $\ldots$, $Y_s^*$ of $Y_{i_1}$, $Y_{i_2}$, $\ldots$,
  $\ldots$, $Y_{i_s}$ respectively
  such that the following hold.
  \begin{enumerate}[(i)]
  \item $\abs{X_i^*}=\abs{Y_i^*}=t$ for all $1\le i\le s$,
  \item   one of $K_t\mat \S_t$,
    $K_t\mat K_t$,  $K_t\antimat \S_t$, 
    and $K_t\antimat K_t$
    is isomorphic to 
    all $G[X_i^*\cup Y_i^*]$ for all $1\le i\le s$,
  \item $X_i^*$ is complete to $Y_j^*$ for all $i<j$
    or $X_i^*$ is anti-complete to $Y_j^*$ for all $i<j$,
  \item $Y_i^*$ is complete to $X_j^*$ for all $i<j$
    or $Y_i^*$ is anti-complete to $X_j^*$ for all $i<j$,
  \item $Y_i^*$ is complete to $Y_j^*$ for all $i<j$
    or $Y_i^*$ is anti-complete to $Y_j^*$ for all $i<j$.
  \end{enumerate}
  In other words, $G$ has an induced subgraph isomorphic to
  one of
  \[(K_t\mat K_t)^s_A,
  (K_t\mat \S_t)^s_A,
  (K_t\antimat K_t)^s_A, \text{ and }
  (K_t\antimat \S_t)^s_A\]
  for some $0$-$1$ matrix $A=
  (\begin{smallmatrix}
    0&b \\c&d 
  \end{smallmatrix})$.
\end{lemma}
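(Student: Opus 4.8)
The plan is to combine the rigidity of the structure inside each pair $(X_i,Y_i)$ with two Ramsey-type arguments: one application of the product Ramsey theorem (Theorem~\ref{thm:productramsey}) to make the interaction between \emph{every two} pairs complete-or-anti-complete, followed by one application of the ordinary Ramsey theorem on the index set to make that interaction the same for \emph{all} selected pairs.

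First I would record the rigidity. By hypothesis a single graph $S\in\{\K_n\mat\S_n,\ \K_n\mat\K_n,\ \K_n\antimat\S_n,\ \K_n\antimat\K_n\}$ is isomorphic to every $G[X_i\cup Y_i]$. In each of the four cases every vertex $x\in X_i$ has a unique ``partner'' $\phi_i(x)\in Y_i$: the unique neighbor of $x$ in $Y_i$ when $S$ uses $\mat$, and the unique non-neighbor of $x$ in $Y_i$ when $S$ uses $\antimat$. The key point is that for any $t$-element subset $X_i^*\subseteq X_i$, setting $Y_i^*:=\phi_i(X_i^*)$ automatically makes $G[X_i^*\cup Y_i^*]$ isomorphic to the $t$-vertex analogue of $S$. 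Hence conditions (i) and (ii) are free once the $X_i^*$ are chosen and the $Y_i^*$ are defined this way, and it remains only to control the three cross-relations between $X_i^*$ and $Y_j^*$, between $Y_i^*$ and $X_j^*$, and between $Y_i^*$ and $Y_j^*$ for $i<j$; the pair $X_i^*$, $X_j^*$ is anti-complete because $G[X_1\cup\cdots\cup X_m]\cong mK_n$.

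Next I would encode the interaction of two pairs as a finite coloring. For $i<j$, $x\in X_i$, and $x'\in X_j$, the only adjacencies among $x,\phi_i(x),x',\phi_j(x')$ that are not already determined are $x\phi_j(x')$, $\phi_i(x)x'$, and $\phi_i(x)\phi_j(x')$ (the pair $xx'$ is non-adjacent, and $x\phi_i(x)$ and $x'\phi_j(x')$ are forced by $S$), so these three bits give an $8$-coloring of $X_i\times X_j$. Put $m':=R(s;8)$ and keep only the first $m'$ pairs $P_1,\dots,P_{m'}$, so one may take $M:=R(s;8)$. Apply Theorem~\ref{thm:productramsey} to the ground sets $X_1,\dots,X_{m'}$ with every $k_i=1$, every target size equal to $t$, and the coloring sending $(x_1,\dots,x_{m'})$ to the $\binom{m'}{2}$-tuple of the above triples over all pairs of coordinates; taking $N:=R_{prod}(8^{\binom{m'}{2}},m';1,\dots,1;t,\dots,t)$, for $n\ge N$ this produces subsets $A_i\subseteq X_i$ with $\abs{A_i}\ge t$ such that for every $i<j$ the triple is a constant $\tau_{ij}\in\{0,1\}^3$ on $A_i\times A_j$. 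Finally, coloring the edge $\{i,j\}$ of $K_{m'}$ by $\tau_{ij}$ and invoking Ramsey's theorem gives indices $i_1<\dots<i_s$ with a common value $\tau=(b,c,d)$; choosing any $t$-element $X_a^*\subseteq A_{i_a}$ and $Y_a^*:=\phi_{i_a}(X_a^*)$ then makes (i)--(v) hold, with $b$ governing (iii), $c$ governing (iv), and $d$ governing (v), and the induced subgraph on $\bigcup_a (X_a^*\cup Y_a^*)$ is exactly $(\K_t\odot Z_t)^s_A$ with $A=(\begin{smallmatrix}0&b\\c&d\end{smallmatrix})$, where $\K_t\odot Z_t$ is the $t$-vertex analogue of $S$.

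The main obstacle is that the cross-relations between two pairs are a priori arbitrary bipartite graphs, and we must simultaneously (a) turn each of them into a complete-or-anti-complete relation after passing to $t$-subsets and (b) make the resulting pattern identical across all $\binom{s}{2}$ pairs of selected indices. The rigidity of $S$ is what makes (a) possible: the partner maps $\phi_i$ collapse the interaction of two pairs into a finitely colored complete bipartite graph, so the product Ramsey theorem applies directly; (b) is then handled by the second, ordinary Ramsey argument on the $m'$ indices. One must only be careful that each cross-relation is read off a fixed coordinate of the constant product-Ramsey color --- so that it does not depend on which vertices of $A_{i_a}$ and $A_{i_b}$ are chosen --- and that the auxiliary coloring records the triple for every pair of coordinates, not merely consecutive ones.
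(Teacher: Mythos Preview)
Your proposal is correct and follows essentially the same two-step strategy as the paper: first apply the product Ramsey theorem (Theorem~\ref{thm:productramsey}) with one-element tuples and the $8^{\binom{m'}{2}}$-coloring recording the three undetermined adjacency bits for every pair of indices, then apply the ordinary Ramsey number $R(s;8)$ to the index set to uniformize the pattern. The only cosmetic differences are that the paper parameterizes the product Ramsey step over the $Y_i$'s rather than the $X_i$'s (equivalent via your partner maps $\phi_i$) and is slightly less explicit than you are about first trimming $m$ down to $m'=R(s;8)$ so that $N$ depends only on $s$ and $t$.
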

\begin{proof}
  Let $m\ge M:=R(s;8)$ and  
  let  \[n\ge N:=R_{prod} (8^{\binom{m}{2}}, m; 1, 1, \ldots, 1; t,t,\ldots,t).\]
  The first step of the proof is to clean up edges between $X_i\cup Y_i$ and
  $X_j\cup Y_j$ for distinct $i$ and $j$.
  We consider a function that maps $(v_1,v_2,\ldots,v_m)$ for $v_i\in Y_i$
  to an edge-coloring
  of $K_m$ with colors on the edges $ij$
  based on the three possible adjacencies between
  a pair of $v_i$ and its unique neighbor or non-neighbor in $X_i$
  and a pair of $v_j$ and its unique neighbor or non-neighbor in $X_j$.
  Each edge of $K_m$ will receive one of $2^3$ colors and
  the range of this function has at most $8^{\binom{m}{2}}$ edge-colorings of $K_m$.
  By Theorem~\ref{thm:productramsey},
  there exist subsets $X_1'$, $X_2'$, $\ldots$, $ X_m'$, $Y_1'$, $Y_2'$, $\ldots$,  $Y_m'$ of
  $X_1$, $X_2,\ldots, X_m,Y_1,Y_2,\ldots,  Y_m$, respectively, such that
  \begin{enumerate}[(i)]
  \item $\abs{X_i'}=\abs{Y_i'}=t$,
  \item for each $i\neq j$, $X_i'$ is complete or anti-complete to $Y_j'$, and 
    $Y_i'$ is complete or anti-complete to $Y_j'$,
  \item 
    one of 
    $K_{t}\mat \S_{t}$,
    $K_{t}\mat K_{t}$,  $K_{t}\antimat \S_{t}$, 
    and $K_{t}\antimat K_{t}$
    is isomorphic to all $G[X_i'\cup Y_i']$.
  \end{enumerate}

  Now our next goal is to take a subset of $\{1,2,\ldots,m\}$ by using Ramsey's theorem.
  Let us color the edges $ij$ of $K_{m}$ ($i<j$) by the one of $8$ colors
  determined by the following:
  \begin{itemize}
  \item $X_i'$ is complete to $Y_j'$ or not.
  \item $Y_i'$ is complete to $X_j'$ or not.
  \item $Y_i'$ is complete to $Y_j'$ or not.
  \end{itemize}
  Then by Ramsey's theorem, there exists a subset $I$ of $\{1,2,\ldots,m\}$
  with $\abs{I}= s$  
  such that every edge of $K_m$ has the same color.
  Let $I=\{i_1,i_2,\ldots,i_s\}$ for $i_1<i_2<\cdots<i_s$
  and $X_j^*=X_{i_j}$, $Y_j^*=Y_{i_j}$ for $1\le j\le s$. This provides our conclusion.
\end{proof}

Now we will see that in many cases, we will have a vertex-minor
isomorphic to  $nT_{2,n}$.
\begin{lemma}\label{lem:obtainnt2}
  Let $n$ be a positive integer.
  \begin{enumerate}[(1)]
  \item $\K_{n+1}\mat\S_{n+1}$ contains a vertex-minor isomorphic to $T_{2,n}$.
  \item $\K_{n+2}\mat\K_{n+2}$ contains a vertex-minor isomorphic to $T_{2,n}$.
  \item $\K_{n+2}\antimat\S_{n+2}$ contains a vertex-minor isomorphic to $T_{2,n}$.
  \item $\K_{n+1}\antimat\K_{n+1}$ contains a vertex-minor isomorphic to $T_{2,n}$.
  \end{enumerate}
  Therefore, if $A=(
  \begin{smallmatrix}
    0 & 0\\
    0 & 0
  \end{smallmatrix})$, then 
  all of $(\K_{n+1}\mat \S_{n+1})^n_A$,
  $(\K_{n+2}\mat \K_{n+2})^n_A$,
  $(\K_{n+2}\antimat \S_{n+2})^n_A$,
  and 
  $(\K_{n+1}\antimat \K_{n+1})^n_A$
  have vertex-minors isomorphic to $nT_{2,n}$.
  
\end{lemma}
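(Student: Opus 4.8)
The plan is to verify each of (1)--(4) by exhibiting an explicit sequence of local complementations and vertex deletions, treating $\K_{n+1}\mat\S_{n+1}$ as the basic case and reducing the other three graphs to it. In each product graph I write the two sides as $v_1,v_2,\dots$ and $w_1,w_2,\dots$, following the definitions of $\mat$ and $\antimat$. For (1), let $v_1,\dots,v_{n+1}$ be the clique and $w_1,\dots,w_{n+1}$ the independent set of $\K_{n+1}\mat\S_{n+1}$, joined by the matching $v_iw_i$. I would perform local complementation at $v_{n+1}$: since $N(v_{n+1})=\{v_1,\dots,v_n,w_{n+1}\}$ and this set induces a $K_n$ plus an isolated vertex, the operation deletes all edges among $v_1,\dots,v_n$ and adds all edges between $w_{n+1}$ and $v_1,\dots,v_n$, while leaving every edge with an endpoint outside $N(v_{n+1})$ --- in particular every $v_iw_i$ with $i\le n$ --- untouched. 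Deleting $v_{n+1}$ then leaves precisely the graph with edge set $\{w_{n+1}v_i:1\le i\le n\}\cup\{v_iw_i:1\le i\le n\}$, which is $T_{2,n}$ with centre $w_{n+1}$, subdivision vertices $v_1,\dots,v_n$, and leaves $w_1,\dots,w_n$.

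For (2) and (3) I would reduce to (1). In $\K_{n+2}\mat\K_{n+2}$ one has $N(w_{n+2})=\{w_1,\dots,w_{n+1},v_{n+2}\}$, so local complementation at $w_{n+2}$ turns $w_1,\dots,w_{n+1}$ into an independent set, makes $v_{n+2}$ adjacent to each of them, and changes no edge with both endpoints in $\{v_1,\dots,v_{n+1},w_1,\dots,w_{n+1}\}$, since none of $v_1,\dots,v_{n+1}$ lies in $N(w_{n+2})$; hence deleting $v_{n+2}$ and $w_{n+2}$ leaves an induced $\K_{n+1}\mat\S_{n+1}$, and (1) finishes. Likewise, in $\K_{n+2}\antimat\S_{n+2}$ we have $N(v_{n+2})=\{v_1,\dots,v_{n+1},w_1,\dots,w_{n+1}\}$, so local complementation at $v_{n+2}$ makes $v_1,\dots,v_{n+1}$ independent, makes $w_1,\dots,w_{n+1}$ a clique, and replaces the anti-matching $\{v_iw_j:i\ne j\}$ between the two sides by the matching $\{v_iw_i\}$; deleting $v_{n+2}$ and $w_{n+2}$ then leaves an induced $\S_{n+1}\mat\K_{n+1}$, which is $\K_{n+1}\mat\S_{n+1}$ with its two sides relabelled, so (1) again applies.

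For (4), let $v_1,\dots,v_{n+1}$ and $w_1,\dots,w_{n+1}$ be the two cliques of $\K_{n+1}\antimat\K_{n+1}$, with $v_iw_j$ an edge if and only if $i\ne j$. Since $N(v_{n+1})=\{v_1,\dots,v_n,w_1,\dots,w_n\}$, local complementation at $v_{n+1}$ makes both $v_1,\dots,v_n$ and $w_1,\dots,w_n$ independent and turns the anti-matching between them into the matching $\{v_iw_i:i\le n\}$, while $v_{n+1}$ becomes adjacent to all of $v_1,\dots,v_n,w_1,\dots,w_n$ and $w_{n+1}$ stays adjacent to all of $v_1,\dots,v_n,w_1,\dots,w_n$. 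Deleting $w_{n+1}$ leaves a graph in which $v_{n+1}$ is adjacent to every other vertex and the only further edges form the matching $v_iw_i$ ($i\le n$). Finally, for $i=1,\dots,n$ I would perform local complementation at $v_i$; since $N(v_i)=\{v_{n+1},w_i\}$ at every stage, each such step merely deletes the edge $v_{n+1}w_i$, so after all $n$ steps the edge set is $\{v_{n+1}v_i:1\le i\le n\}\cup\{v_iw_i:1\le i\le n\}$, which is $T_{2,n}$.

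The concluding assertion is then immediate: when $A$ is the zero matrix, $(G\odot H)^n_A$ is by definition the disjoint union of $n$ copies of $G\odot H$, and since local complementation and vertex deletion each act within a single connected component, a disjoint union of $n$ graphs each having a vertex-minor isomorphic to $T_{2,n}$ has a vertex-minor isomorphic to $nT_{2,n}$; applying this to the four graphs in (1)--(4) gives the claim. The whole argument is elementary local-complementation bookkeeping, so there is no genuine obstacle; the only point requiring care is to check, at each step, that the vertices destined for deletion and --- more importantly --- the matching edges that must survive are unaffected by the complementation, which reduces to deciding membership in the relevant neighbourhood. In (4) one additionally uses that the complementations at $v_1,\dots,v_{i-1}$ do not disturb $N(v_i)$, which holds because each of them toggles only the single edge $v_{n+1}w_j$.
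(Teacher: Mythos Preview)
Your proof is correct and follows essentially the same approach as the paper: both arguments verify each case by an explicit short sequence of local complementations and deletions, and your treatment of (1) and (4) matches the paper's up to a relabelling of indices. The only difference is organizational: for (2) and (3) you first reduce to an induced copy of $\K_{n+1}\mat\S_{n+1}$ via a single local complementation and then invoke (1), whereas the paper writes down a direct sequence for each case; your reduction is arguably tidier but not a genuinely different method.
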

\begin{proof}
 (1) Let $V(\K_{n+1})=\{v_i:1\le i\le n+1\}$ and $V(\S_{n+1})=\{w_i:1\le i\le n+1\}$.
	The graph $(\K_{n+1}\mat\S_{n+1}-w_1)*v_1$ is isomorphic to $T_{2,n}$.
     
    (2) Let $\{v_i:1\le i\le n+2\}$ and $\{w_i:1\le i\le n+2\}$ be the vertex sets of two copies of $\K_{n+2}$. 
    The graph $((\K_{n+2}\mat\K_{n+2}-\{v_1, w_2\})*v_2*w_1)-\{w_1\}$ is isomorphic to $T_{2,n}$.

  (3) Let $V(\K_{n+2})=\{v_i:1\le i\le n+2\}$ and $V(\S_{n+2})=\{w_i:1\le i\le n+2\}$.
	The graph $((\K_{n+2}\antimat\S_{n+2}-\{w_1, v_2\})*v_1*w_2)-\{w_2\}$ is isomorphic to $T_{2,n}$.

 (4) Let $\{v_i:1\le i\le n+1\}$ and $\{w_i:1\le i\le n+1\}$ be the vertex sets of two copies of $\K_{n+1}$. 
  The graph $(\K_{n+1}\antimat\K_{n+1}-w_1)*v_1$ is isomorphic to the graph obtained from $\S_{n}\mat \S_{n}$ by adding a vertex $v_1$ adjacent to all other vertices.
    Thus, the graph $(\K_{n+1}\antimat\K_{n+1}-w_1)*v_1*v_2*\cdots *v_{n+1}$ is isomorphic to $T_{2,n}$.
  \end{proof}

In the following lemma, 
  we 
  show that
  if $A$ is not symmetric, then we obtain $P_n$ as a vertex-minor.
\begin{lemma}\label{lem:notsymmetric}
  Let $n\ge 2$ be an integer.
  If $A=(
  \begin{smallmatrix}
    0 & b \\c&d
  \end{smallmatrix})$
  is a $0$-$1$ matrix such that $b\neq c$,
  then 
  both $(\K_{1}\mat \K_{1})^n_A$
  and
  $(\K_{1}\antimat \K_{1})^{n+1}_A$
  have vertex-minors isomorphic to $P_{n}$.
\end{lemma}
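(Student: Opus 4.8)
The plan is to recognize both graphs, up to isomorphism and the deletion of a bounded number of vertices, as one of the half graphs $\S_{n}\tri\S_{n}$ and $\K_{n}\tri\S_{n}$, and then invoke Lemma~\ref{lem:antimattopath}(1)--(2). Write $g_i$ and $h_i$ ($1\le i\le\ell$, with $\ell=n$ in the $\mat$ case and $\ell=n+1$ in the $\antimat$ case) for the two vertices of the $i$-th copy, where $g_i$ comes from the first factor and $h_i$ from the second. Since the top-left entry of $A$ is $0$, the set $\{g_1,\ldots,g_\ell\}$ is independent; the set $\{h_1,\ldots,h_\ell\}$ is a clique if $d=1$ and independent if $d=0$. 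For $i<j$, the pair $(g_i,h_j)$ is an edge if and only if $b=1$, and the pair $(g_j,h_i)$ is an edge if and only if $c=1$; the pair $(g_i,h_i)$ is an edge in the $\mat$ case and a non-edge in the $\antimat$ case. Reversing the order of the copies fixes the diagonal entries of $A$, interchanges the roles of $b$ and $c$, and yields an isomorphic graph, so by the hypothesis $b\neq c$ we may assume $b=1$ and $c=0$.

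Under this assumption, $(g_p,h_q)$ is an edge if and only if $p\le q$ in the $\mat$ case, and if and only if $p<q$ in the $\antimat$ case. In the $\mat$ case, setting $v_i:=h_i$ and $w_i:=g_i$ gives that $v_i$ is adjacent to $w_j$ exactly when $i\ge j$, with the $w$'s independent and the $v$'s independent (if $d=0$) or forming a clique (if $d=1$); hence $(\K_{1}\mat\K_{1})^{n}_{A}$ is isomorphic to $\S_{n}\tri\S_{n}$ or to $\K_{n}\tri\S_{n}$. In the $\antimat$ case, $(\K_{1}\antimat\K_{1})^{n+1}_{A}$ is the analogous graph with the strict inequality $p<q$, an ``open'' half graph on $2(n+1)$ vertices; here $h_1$ has no neighbour among $g_1,\ldots,g_{n+1}$ and $g_{n+1}$ has no neighbour among $h_1,\ldots,h_{n+1}$, so after deleting these two vertices and reindexing we obtain exactly $\S_{n}\tri\S_{n}$ (if $d=0$) or $\K_{n}\tri\S_{n}$ (if $d=1$) on the remaining $2n$ vertices. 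In every case, Lemma~\ref{lem:antimattopath}(1) or (2) shows that the graph in hand is locally equivalent to $P_{2n}$, and $P_{2n}$ contains $P_n$ as an induced subgraph; therefore both $(\K_{1}\mat\K_{1})^{n}_{A}$ and $(\K_{1}\antimat\K_{1})^{n+1}_{A}$ have a vertex-minor isomorphic to $P_{n}$.

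The argument is essentially bookkeeping; the one point that requires care is the $\antimat$ case with $d=1$. There the two vertices $h_1$ and $g_{n+1}$ that we delete are not isolated in the whole graph, since $\{h_1,\ldots,h_{n+1}\}$ is a clique, so one must verify that they have no neighbour on the \emph{opposite} side of the bipartition before concluding that their removal turns the open half graph into the closed half graph; this is exactly what the explicit adjacency description above provides, and it is also the reason the statement for $\antimat$ uses $n+1$ rather than $n$ copies. Finally, the symmetric subcase $b=0$, $c=1$ is handled by the order-reversal isomorphism noted above, and one only has to keep track of whether the clique $\{h_i\}$ lands on the $\K$-side of the half graph (invoking Lemma~\ref{lem:antimattopath}(2)) or both sides are independent (invoking Lemma~\ref{lem:antimattopath}(1)). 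This completes the proof.
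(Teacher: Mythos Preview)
Your proof is correct and follows essentially the same approach as the paper's: identify the graph (after possibly deleting two vertices in the $\antimat$ case) as a half graph and invoke Lemma~\ref{lem:antimattopath}. In fact your identification is sharper than the paper's: with $a=0$ the $g$-side is always independent, so the resulting half graph is $\S_n\tri\S_n$ when $d=0$ and $\K_n\tri\S_n$ when $d=1$, yielding $P_{2n}$ via parts (1)--(2) of Lemma~\ref{lem:antimattopath}; the paper instead records $\K_n\tri\S_n$ and $\K_n\tri\K_n$ and concludes only $P_{2n-2}$, which is a harmless slip since $P_{2n-2}$ already contains $P_n$ for $n\ge 2$.
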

\begin{proof}
  If $d=0$, then $(\K_{1}\mat \K_{1})^n_A$ is isomorphic to $K_n\tri \S_n$
  and $(K_{1}\antimat K_{1})^{n+1}_A$ contains an induced subgraph
  isomorphic to $K_n\tri \S_n$.
  If $d=1$, then  $(\K_{1}\mat \K_{1})^{n}_A$ is isomorphic to $K_n\tri \K_n$
  and $(\K_{1}\antimat \K_{1})^{n+1}_A$  contains an induced subgraph
  isomorphic to $K_n\tri \K_n$.
  By Lemma~\ref{lem:antimattopath}, there is a vertex-minor isomorphic to $P_{2n-2}$ in both cases. 
\end{proof}

\begin{lemma}\label{lem:simpledcm2}
  Let $n\ge 2$  be an integer.
  If $A=(
  \begin{smallmatrix}
    0 & 0\\
    0 &1 
  \end{smallmatrix}
  )$, then
  all of $(K_{n+2}\mat K_{n+2})^{n+1}_A$,
  $(K_{n+2}\mat \S_{n+2})^{n+1}_A$,
  $(K_{n+2}\antimat K_{n+2})^{n+1}_A$,
  and 
  $(K_{n+2}\antimat \S_{n+2})^{n+1}_A$
  have vertex-minors isomorphic to $nT_{2,n}$.
	\end{lemma}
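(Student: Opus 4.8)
The plan is to treat the $(n+1)$-st copy as a sacrifice: a single local complementation at a vertex of its $H$-part flips every adjacency among the $H$-parts of the remaining $n$ copies, collapsing the pattern prescribed by $A$ to the all-zero one, after which the ``therefore'' clause of Lemma~\ref{lem:obtainnt2} finishes the proof.

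Write $G_1,\dots,G_{n+1}$ and $H_1,\dots,H_{n+1}$ for the $G$-parts and $H$-parts of the $n+1$ copies of $G\odot H$, where $G=\K_{n+2}$, $H\in\{\K_{n+2},\S_{n+2}\}$, and $\odot\in\{\mat,\antimat\}$. Since the only nonzero entry of $A$ is its bottom-right one, for all $i<j$ every vertex of $G_i$ is non-adjacent to every vertex of $G_j\cup H_j$, every vertex of $H_i$ is non-adjacent to every vertex of $G_j$, and $H_i$ is complete to $H_j$. Fix any vertex $u\in H_{n+1}$ and set
\[
  G':=\left((G\odot H)^{n+1}_A*u\right)-(G_{n+1}\cup H_{n+1}).
\]
Then $G'$ is a vertex-minor of $(G\odot H)^{n+1}_A$ with vertex set $G_1\cup H_1\cup\dots\cup G_n\cup H_n$.

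Next I identify $G'$. By the displayed properties of $A$, the neighborhood $N(u)$ contains all of $H_1\cup\dots\cup H_n$ and is disjoint from $G_1\cup\dots\cup G_n$, so $N(u)\cap V(G')=H_1\cup\dots\cup H_n$. Hence local complementation at $u$ replaces the subgraph induced on $H_1\cup\dots\cup H_n$ by its complement and changes nothing else among the vertices of $G'$: each $H_i$ ($i\le n$) becomes $\overline{H_i}$ — namely $\S_{n+2}$ if $H=\K_{n+2}$ and $\K_{n+2}$ if $H=\S_{n+2}$ — the sets $H_i$ and $H_j$ become anti-complete for $i\neq j$, while the edges inside each $G_i$, the $\odot$-edges inside each copy, and all the (anti-complete) cross-copy adjacencies involving a $G$-part stay the same. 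Writing $A_0=(\begin{smallmatrix}0&0\\0&0\end{smallmatrix})$, the graph $G'$ is therefore isomorphic to
\[
  (\K_{n+2}\mat\S_{n+2})^n_{A_0},\quad
  (\K_{n+2}\mat\K_{n+2})^n_{A_0},\quad
  (\K_{n+2}\antimat\S_{n+2})^n_{A_0},\quad\text{or}\quad
  (\K_{n+2}\antimat\K_{n+2})^n_{A_0}
\]
according as $(G\odot H)^{n+1}_A$ was built from $\K_{n+2}\mat\K_{n+2}$, $\K_{n+2}\mat\S_{n+2}$, $\K_{n+2}\antimat\K_{n+2}$, or $\K_{n+2}\antimat\S_{n+2}$, respectively.

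Finally I conclude. The graphs $(\K_{n+2}\mat\K_{n+2})^n_{A_0}$ and $(\K_{n+2}\antimat\S_{n+2})^n_{A_0}$ appear directly in the ``therefore'' clause of Lemma~\ref{lem:obtainnt2}; for the other two, deleting in every copy the vertex $v_{n+2}$ of the $G$-part together with the vertex $w_{n+2}$ of the $H$-part yields induced subgraphs isomorphic to $(\K_{n+1}\mat\S_{n+1})^n_{A_0}$ and $(\K_{n+1}\antimat\K_{n+1})^n_{A_0}$, which also appear there. In every case $G'$, and hence $(G\odot H)^{n+1}_A$ by transitivity of the vertex-minor relation, has a vertex-minor isomorphic to $nT_{2,n}$. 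I do not expect a genuinely hard step here: the only delicate point is the bookkeeping check that local complementation at $u$ has no effect on the retained vertices beyond complementing the adjacencies inside $H_1\cup\dots\cup H_n$, and this is immediate from $H_1\cup\dots\cup H_n\subseteq N(u)$ and $N(u)\cap(G_1\cup\dots\cup G_n)=\emptyset$, both of which read off at once from the shape of $A$.
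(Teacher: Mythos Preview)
Your proof is correct and follows essentially the same approach as the paper: sacrifice one copy by locally complementing at a vertex of its $H$-part (the paper uses the first copy, you use the last, which is immaterial since the cross-copy pattern for this $A$ is symmetric), observe that this complements all adjacencies among the remaining $H$-parts and nothing else, and then invoke Lemma~\ref{lem:obtainnt2}. You are in fact more careful than the paper on two points: you track explicitly that each $H_i$ is replaced by its complement (so the type of the building block swaps), and you spell out the trivial vertex deletions needed to match the indices $n+1$ versus $n+2$ in the ``therefore'' clause of Lemma~\ref{lem:obtainnt2}, whereas the paper leaves both implicit.
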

	\begin{proof}
          Let $H$ be the one of   $K_{n+2}\mat K_{n+2}$,
          $K_{n+2}\mat \S_{n+2}$,
          $K_{n+2}\antimat K_{n+2}$,
          or
          $K_{n+2}\antimat \S_{n+2}$
          and let $G=H^{n+1}_A$.
          Let $X_1$, $Y_1$ be the sets of vertices of the first copy of $H$ in $G$
          where $X_1$ denotes the set of vertices in $K_{n+2}$
          and $Y_1$ denotes the other vertices.
          Let $w$ be a vertex in $Y_1$.
          
          Then $G*w$ contains an induced subgraph isomorphic to
          $(K_{n+2}\mat K_{n+2})^{n}_B$,
          $(K_{n+2}\mat \S_{n+2})^{n}_B$,
          $(K_{n+2}\antimat K_{n+2})^{n}_B$,
          or 
          $(K_{n+2}\antimat \S_{n+2})^{n}_B$
          for $B=(
          \begin{smallmatrix}
            0&0\\0&0
          \end{smallmatrix})$.
          By Lemma~\ref{lem:obtainnt2}, it has a vertex-minor isomorphic to
          $nT_{2,n}$.
	\end{proof}

	\begin{lemma}\label{lem:simpledcm3}
          Let $n\ge 2$  be an integer.
          If $A=(
          \begin{smallmatrix}
            0 & 1\\
            1 & d 
          \end{smallmatrix}
          )$ for some $d\in\{0,1\}$, then
          all of $(K_{n+2}\mat K_{n+2})^{n+2}_A$,
          $(K_{n+2}\mat \S_{n+2})^{n+2}_A$,
          $(K_{n+2}\antimat K_{n+2})^{n+2}_A$,
          and 
          $(K_{n+2}\antimat \S_{n+2})^{n+2}_A$
          have vertex-minors isomorphic to $nT_{2,n}$.
	\end{lemma}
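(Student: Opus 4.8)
The plan is to spend one of the $n+2$ copies to simplify the matrix $A=(\begin{smallmatrix}0&1\\1&d\end{smallmatrix})$ to $(\begin{smallmatrix}0&0\\0&d\end{smallmatrix})$ while keeping $n+1$ copies, and then to quote Lemma~\ref{lem:simpledcm2} (when $d=1$) or the last assertion of Lemma~\ref{lem:obtainnt2} (when $d=0$). Set $G=H^{n+2}_A$, where $H$ is one of $\K_{n+2}\mat\K_{n+2}$, $\K_{n+2}\mat\S_{n+2}$, $\K_{n+2}\antimat\K_{n+2}$, $\K_{n+2}\antimat\S_{n+2}$, and for $1\le i\le n+2$ let $X_i$ be the vertex set of the $i$-th copy of $\K_{n+2}$ in $G$ and $Y_i$ that of the $i$-th copy of the other factor. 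Then each $X_i$ is a clique, and for distinct $i,j$ the set $X_i$ is anti-complete to $X_j$, complete to $Y_j$ (here $b=c=1$ is used), and $Y_i$ is complete to $Y_j$ if $d=1$ and anti-complete to $Y_j$ if $d=0$.

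Pick any edge $xy$ of $G$ with $x\in X_1$ and $y\in Y_1$; such an edge exists because $n\ge2$, so that both $\K_{n+2}\odot\K_{n+2}$ and $\K_{n+2}\odot\S_{n+2}$ have an edge between the two factors for $\odot\in\{\mat,\antimat\}$. Form the pivot $G*x*y*x$ and then delete all $n+2$ vertices of $X_1\cup Y_1$; call the result $G''$. Recall that pivoting on the edge $xy$ keeps every vertex and the edge $xy$, partitions $V(G)\setminus\{x,y\}$ into the sets $V_x$, $V_y$, $V_{xy}$, $V_0$ of vertices adjacent in $G$ to exactly $x$, exactly $y$, both, or neither, toggles all edges between two distinct sets among $V_x,V_y,V_{xy}$, and changes nothing else. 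For $j\ge2$ every vertex of $X_j$ is adjacent to $y$ but not to $x$, so $X_j\subseteq V_y$; every vertex of $Y_j$ is adjacent to $x$, and is adjacent to $y$ exactly when $d=1$, so $Y_j\subseteq V_{xy}$ if $d=1$ and $Y_j\subseteq V_x$ if $d=0$; every other vertex lies in $X_1\cup Y_1$ and gets deleted. So among the surviving vertices, $X_j$ and $Y_j$ lie in two distinct pivot classes, while $X_j,X_k$ lie in one class and $Y_j,Y_k$ lie in one class for all distinct $j,k\ge2$. Hence after the pivot each $X_j$ is still a clique, each $Y_j$ is still a clique or an independent set, the relation between $X_j$ and $X_k$ (anti-complete) and the relation between $Y_j$ and $Y_k$ (governed by $d$) are unchanged, while the relation between $X_j$ and $Y_k$ for $j\neq k$, which is complete, and the relation between $X_j$ and $Y_j$, which is a perfect matching or the complement of one, are both toggled. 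Since toggling a perfect matching yields the complement of a perfect matching and vice versa, $G''[X_j\cup Y_j]$ is isomorphic to the graph $H''$ obtained from $H$ by exchanging $\mat$ and $\antimat$, and $G''\cong(H'')^{n+1}_{A'}$ with $A'=(\begin{smallmatrix}0&0\\0&d\end{smallmatrix})$, the $n+1$ copies being indexed by $2,3,\dots,n+2$.

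Since $G''$ is obtained from $G$ by local complementations and deletions, $G''$ is a vertex-minor of $G$. If $d=1$, then $(H'')^{n+1}_{A'}$ with $A'=(\begin{smallmatrix}0&0\\0&1\end{smallmatrix})$ is exactly one of the four graphs to which Lemma~\ref{lem:simpledcm2} applies, so it, and hence $G$, has a vertex-minor isomorphic to $nT_{2,n}$. If $d=0$, then $A'=(\begin{smallmatrix}0&0\\0&0\end{smallmatrix})$: keep only $n$ of the $n+1$ copies, and when $H''$ is $\K_{n+2}\mat\S_{n+2}$ or $\K_{n+2}\antimat\K_{n+2}$ restrict each copy to a sub-block on $n+1$ vertices in each of its two factors (respecting the natural pairing), which preserves the block type and the anti-completeness between copies; this exhibits one of $(\K_{n+1}\mat\S_{n+1})^n_{A'}$, $(\K_{n+2}\mat\K_{n+2})^n_{A'}$, $(\K_{n+2}\antimat\S_{n+2})^n_{A'}$, $(\K_{n+1}\antimat\K_{n+1})^n_{A'}$ as an induced subgraph of $G''$, and the last assertion of Lemma~\ref{lem:obtainnt2} then gives a vertex-minor isomorphic to $nT_{2,n}$. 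In both cases $G$ has a vertex-minor isomorphic to $nT_{2,n}$, as required.

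I expect the only delicate point to be the membership computation in the middle paragraph: for each of the four choices of $H$ and each $d\in\{0,1\}$ one has to choose $xy$ correctly (a matched pair when $H$ uses $\mat$, a pair that is not an anti-matched pair when $H$ uses $\antimat$) and check, for every $j\ge2$, which pivot class $X_j$ and $Y_j$ land in. The recurring fact making this work is that $X_j$ always joins $V_y$ while $Y_j$ joins $V_{xy}$ when $d=1$ and $V_x$ when $d=0$; thus $X_j$ and $Y_j$ always end up separated, so all off-diagonal $1$'s of $A$ are toggled to $0$ and the internal matchings flip, whereas $Y_j$ and $Y_k$ always stay in a common class, so the diagonal entry $d$ is never disturbed.
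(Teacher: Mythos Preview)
Your argument is correct and follows essentially the same route as the paper: spend one copy, pivot on an edge $xy$ with $x$ in the top part and $y$ in the bottom part so that the off-diagonal $1$'s of $A$ become $0$'s, and then invoke Lemmas~\ref{lem:obtainnt2} and~\ref{lem:simpledcm2}. Your pivot analysis is in fact more explicit than the paper's (you track that $\mat$ and $\antimat$ swap inside each surviving block, whereas the paper only asserts the result is ``one of'' the four types); the one cosmetic slip is that $X_1\cup Y_1$ has $2(n+2)$ vertices, not $n+2$.
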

  \begin{proof}
          Let $H$ be the one of   $K_{n+2}\mat K_{n+2}$,
          $K_{n+2}\mat \S_{n+2}$,
          $K_{n+2}\antimat K_{n+2}$,
          or
          $K_{n+2}\antimat \S_{n+2}$
          and let $G=H^{n+2}_A$.
          There exists an induced subgraph $G'$ of~$G$ and an edge $xy$ of~$G'$
          such that
          $G'- x- y$ is isomorphic to $H^{n+1}_A$,
          $x$ is complete to the bottom part of copies of $H$,
          and anti-complete to the top part of copies of $H$, 
          and 
          $y$ is complete to the top part of copies of $H$, 
          and is either complete or anti-complete to
          the bottom part of copies of $H$,
          because we can choose a vertex~$x$ from the top part of $H$
          and a neighbor $y$ is chosen from the bottom part in the same copy of $H$.
          Now, it is easy to see that $G'*x*y*x-x-y$ is isomorphic to
          one of 
 $(K_{n+2}\mat K_{n+2})^{n+1}_B$,
          $(K_{n+2}\mat \S_{n+2})^{n+1}_B$,
          $(K_{n+2}\antimat K_{n+2})^{n+1}_B$,
          and 
          $(K_{n+2}\antimat \S_{n+2})^{n+1}_B$
          for a matrix $B=
          (\begin{smallmatrix}
            0  & 0\\
            0 & d
          \end{smallmatrix})$.
          By Lemmas~\ref{lem:obtainnt2} and \ref{lem:simpledcm2},
          $G$ has a vertex-minor isomorphic to $nT_{2,n}$.
	\end{proof}

\section{Main proof}\label{sec:mainproof}

	We are ready to prove our main theorem, restated below.
	\begin{thm:main}
	A vertex-minor ideal $\mathcal{C}$ has bounded depth-$2$ rank-brittleness if and only if 
		\[ \{P_1, P_2, P_3, P_4, \ldots \}\nsubseteq \mathcal{C}, \]
		and
		\[ \{T_{2,1}, 2T_{2,2}, 3T_{2,3}, 4T_{2,4}, \ldots \}\nsubseteq \mathcal{C}. \]
	\end{thm:main}
	
        Before the proof, let us discuss why
        the two conditions in Theorem~\ref{thm:main}, 
      $\{P_1, P_2, P_3, \ldots \}\not\subseteq \mathcal C$		and
      $\{T_{2,1}, 2T_{2,2}, 3T_{2,3}, \ldots \}\not\subseteq \mathcal C$, are incomparable.
      First we sketch the proof showing 
      that no path contains $T_{2,3}$ as a vertex-minor.
      The tree $T_{2,3}$ is a tree having a vertex $v$ such that $T_{2,3}-v$ contains three components having linear rank-width $1$. 
      (The definition of linear rank-width will be discussed in Section~\ref{sec:application}.)
      It implies that it has linear rank-width at least $2$,
      by a characterization of linear rank-width on trees, see \cite{AdlerK2015, AdlerKK2017}.
      However, paths have linear rank-width $1$ and therefore
      no path contains $T_{2,3}$ as a vertex-minor.
      Thus, if $\mathcal C$ is the set of all vertex-minors of $P_n$ for all $n$, then $\mathcal C$ does not satisfy the first condition but satisfies the second condition.
      Secondly we claim that no $nT_{2,n}$ contains a long path as a vertex-minor.
      It is not difficult to see that $nT_{2,n}$ has depth-$3$ rank-brittleness at most $3$. 
      However, $\{P_1, P_2, P_3, \ldots \}$ has unbounded rank-depth~\cite{DKO2019}, and thus unbounded depth-$3$ rank-brittleness.
      So, if $\mathcal C$ is the set of all vertex-minors of $nT_{2,n}$ for all $n$,
      then $\mathcal C$ does not satisfy the second condition but satisfies the first condition.

      Now let us start the proof of Theorem~\ref{thm:main}.
      Our first lemma is to prove the forward implication.
      It is already known that $\{P_1, P_2, P_3, \ldots \}$ has unbounded rank-depth~\cite{DKO2019} and therefore it has unbounded depth-$2$
      rank-brittleness.
      Thus, to prove the forward implication,
      it is enough to show that  $\{T_{2,1}, 2T_{2,2}, 3T_{2,3}, 4T_{2,4}, \ldots \}$
      has unbounded depth-$2$ rank-brittleness.
	\begin{lemma}
	The class $\{T_{2,1}, 2T_{2,2}, 3T_{2,3}, 4T_{2,4}, \ldots \}$ has unbounded depth-$2$ rank-brittleness.
	\end{lemma}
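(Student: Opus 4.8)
The plan is to show that $nT_{2,n}$ has depth-$2$ rank-brittleness at least roughly $n/2$, so that the class is unbounded. Fix a large $n$ and suppose $(T,\sigma)$ is any depth-$2$ decomposition of $G:=nT_{2,n}$; I want to find an internal node whose width is $\Omega(n)$. The first step is to understand the structure of $G$: it consists of $n$ pairwise anti-complete copies of $T_{2,n}$, each copy being a star $K_{1,n}$ with every edge subdivided once; so a copy has a center $c$, $n$ "middle" vertices $m_1,\dots,m_n$ (each adjacent to $c$ and to exactly one leaf), and $n$ leaves $\ell_1,\dots,\ell_n$. The cut-rank of a set $S$ in $G$ is the sum, over the $n$ components, of the cut-rank of $S$ restricted to that component, since the components are anti-complete to one another; and within a single $T_{2,n}$ one checks easily that separating roughly half of the middle/leaf pairs from the other half (together with the center on one side) gives cut-rank growing linearly. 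So the "local" obstruction is: any balanced-ish bipartition of a single $T_{2,n}$ that splits many $(m_i,\ell_i)$ pairs has large cut-rank.

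Next I would exploit the radius-$2$ constraint. A tree of radius at most $2$ rooted at $r$ has all leaves within distance $2$ of $r$; since $|V(G)| = 2n^2 + n$ is large while the tree has bounded radius, the root $r$ must have many children, or some child of $r$ must be the parent of many leaves — in any case, by a counting/pigeonhole argument there is an internal node $t$ (either $r$ itself or a child of $r$) such that removing $t$ produces a partition $\mathcal P_t$ of $V(G)$ into parts, each of which is either a single leaf-vertex or a "small cluster" of vertices hanging off one grandchild-free subtree, and where many parts are singletons. The key point: because each $T_{2,n}$ has $2n+1$ vertices and there are $n$ of them, and the tree has radius $2$, the leaves of $T$ corresponding to a fixed copy of $T_{2,n}$ cannot all be grouped into one part of $\mathcal P_t$ for the node $t$ near the root unless $t$ has very high degree — and if $t$ has high degree then some union of parts isolates a linear-sized, "scattered" subset of a copy, which is exactly the bad bipartition identified in the first step.

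Concretely I would argue as follows. Let $r$ be a center of $T$. If $r$ has at least, say, $n$ children $s_1,\dots$, then for each copy $T_{2,n}^{(j)}$ of $G$, its $2n+1$ vertices are distributed among the subtrees $T_{s_1},T_{s_2},\dots$; I choose, for the node $r$, a union $A$ of the parts $\mathcal P_r$ that, component by component, captures about half of each copy's vertices in a way that splits $\Omega(n)$ of the pairs $(m_i,\ell_i)$ in $\Omega(n)$ of the copies — this is possible by an averaging argument over which side each vertex of each copy falls on. Then $\rho_G(A) = \sum_j \rho_{G}(A\cap V(T_{2,n}^{(j)})) \ge$ (number of bad copies) $\times \Omega(n) \gg n$. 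If instead $r$ has fewer than $n$ children, then some child $s$ is the parent of more than $(2n^2+n)/n = 2n+1$ leaves (again pigeonhole, since every non-root vertex is within distance $1$ of a leaf parent or is itself close to $r$), so $\sigma^{-1}$ of those leaves spans at least two full copies of $T_{2,n}$ or a linear chunk of several copies; removing $s$ separates this large set from the rest, and one computes its cut-rank is $\Omega(n)$ directly. The main obstacle will be the bookkeeping in this pigeonhole step: making precise the claim that the radius-$2$ restriction forces a node whose associated partition both has many parts and fails to respect the component structure of $nT_{2,n}$, so that one genuinely gets a scattered linear-sized cut in enough copies simultaneously. Once that combinatorial claim is nailed down, the cut-rank lower bound for a scattered subset of a single $T_{2,n}$ is a routine rank computation over $\mathrm{GF}(2)$ and the sum-over-components identity finishes the proof.
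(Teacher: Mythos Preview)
Your overall instinct—that either some child of the root swallows enough of $G$ to force high width locally, or else the root partition cuts through many components—is exactly the dichotomy the paper uses. But splitting on the \emph{number} of children of $r$ is the wrong pivot, and both branches have real gaps. In your first case ($r$ has at least $n$ children), the set $A$ must be a union of whole parts $X_j$; you cannot choose it ``component by component.'' If every copy $C_k$ of $T_{2,n}$ happens to lie entirely inside a single $X_{j(k)}$—perfectly consistent with $r$ having many children—then no pair $(m_i,\ell_i)$ is ever split by any such $A$, and in fact $\rho_G(A)=0$ for every union of parts at $r$. So the averaging claim fails outright without an extra hypothesis. In your second case, the quantity $\rho_G(V_s)$ for $V_s$ the set of vertices under $s$ is not the width at $s$ and need not be large (it is zero if $V_s$ is a union of whole components); and $|V_s|>2n+1$ does not force $V_s$ to contain two full copies or a ``linear chunk'' of several.

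The fix, which is what the paper does, is to drop the degree case split and argue instead: (a) if any component $C_k$ lies entirely in one $X_j$, then all vertices of $C_k$ are singleton parts in $\mathcal P_{r_j}$, so choosing $S$ to be the $n$ leaves of $C_k$ shows the width at $r_j$ is at least $n$; (b) otherwise each of the $n$ components has an edge with endpoints in different $X_j$'s, and a uniformly random subset $I$ of the children makes each such edge cross the cut $\bigcup_{j\in I}X_j$ with probability $1/2$, so some $I$ yields at least $n/2$ crossing edges in distinct components and hence width at $r$ at least $n/2$. Step~(a) is precisely the missing hypothesis that makes your averaging idea work; once you have it, you only need one crossing edge per component, not $\Omega(n)$ split pairs per copy.
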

	\begin{proof}
	We claim that $nT_{2,n}$ has depth-$2$ rank-brittleness at least $n/2$.
	Suppose that $nT_{2,n}$ admits a $(m,2)$-decomposition $(T, \sigma)$ with $m<n/2$.
        Then $T$ has a root $r$ from which every leaf is within distance at most $2$, and we may assume that $r$ is not a leaf.
	By subdividing an edge if necessary,
	we may further assume that no leaf is adjacent to $r$.
	
	Let $r_1$, $r_2$, $\ldots$, $r_m$ be the neighbors of $r$.
        We color each vertex $v$ of $nT_{2,n}$ by $i\in\{1,2,\ldots,m\}$
        if the component of $T-r$ containing $\sigma(v)$
        has $r_i$.
        An edge of $nT_{2,n}$ is \emph{colorful} if its ends have distinct colors.
	Let $C_1$, $C_2$, $\ldots$, $C_n$ be the components of $nT_{2,n}$.
	
	Suppose that a component $C_i$ is fully contained in $X_j$ for some $j$.
	Then, since $C_i$ contains an induced matching of size $n$, 
	the width of $r_j$ has to be at least $n$.
	This contradicts the assumption that $(T, \sigma)$ has width less than $n/2$. 
	Thus, we may assume that no component $C_i$ is fully contained in some $X_j$.
        So every component $C_i$ has a colorful edge
        and therefore $nT_{2,n}$ has a set $F$ of
        $n$ colorful edges in distinct components.

        Let $X$ be a subset of $\{1,2,\ldots,m\}$ chosen uniformly at random.
        A colorful edge of $nT_{2,n}$ is \emph{$X$-colorful} if
        one end has a color in $X$ and the other end has a color not in $X$.
        Then by the linearity of expectation,
        the expected number of $X$-colorful edges in $F$
        is $n/2$.
        This means that
        there exists $X$ such that there are at least $n/2$ $X$-colorful edges in distinct components of $nT_{2,n}$ and so 
        the width of $r$ is at least $n/2$, contradicting the assumption on $(T,\sigma)$.
      \end{proof}
	
      The following proposition proves the backward implication
      of Theorem~\ref{thm:main}.
        \begin{proposition}\label{prop:mainprop}
          For every integer $n\ge 2$, there exists an integer $N:=d(n)$ such that 
          every graph $G$ of depth-$2$ rank-brittleness at least  $N$ contains a vertex-minor isomorphic to $P_n$ or $nT_{2,n}$.
        \end{proposition}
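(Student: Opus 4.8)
The plan is to combine Proposition~\ref{prop:main2} with the case analysis carried out in Section~\ref{sec:construction}. Given $n\ge 2$, first I would fix parameters large enough to absorb every subsequent Ramsey-type loss. Concretely, I would apply Lemma~\ref{lem:dcm} with $s:=n+2$ and $t:=n+2$ to obtain the thresholds $M=f(n+2,n+2)$ and $N=g(n+2,n+2)$, and then set the input width for Proposition~\ref{prop:main2} to be $\sigma(m)$, where $m$ is chosen at least $\max(n^2+n,\,M,\,N,\,n+2)$, so that all the later lemmas in Section~\ref{sec:construction} apply verbatim. I would then define $d(n):=\sigma(m)$.

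The main body of the argument is a dichotomy driven by Proposition~\ref{prop:main2}. Since $G$ has depth-$2$ rank-brittleness at least $d(n)=\sigma(m)$, Proposition~\ref{prop:main2} produces a vertex-minor $G'$ of one of three types. If $G'\cong P_m$, then since $m\ge n$ we are done. If $G'$ is of type (i), then all graphs $G'[X_i^*\cup Q^*]$ are isomorphic to a common graph, either $K_m\mat\S_m$ or $K_m\antimat\S_m$, with $m\ge n^2+n$; in the first subcase Lemma~\ref{lem:first1} yields an induced path on $3m-1\ge n$ vertices, and in the second Lemma~\ref{lem:first2} yields one on $4m-5\ge n$ vertices, so $G$ has $P_n$ as a vertex-minor. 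The substantive case is type (ii): here $G'$ supplies $m$ cliques $X_i^*$ with $G'[X_1^*\cup\cdots\cup X_m^*]\cong mK_m$ together with sets $Y_i^*$ so that all $G'[X_i^*\cup Y_i^*]$ are isomorphic to one fixed graph among $K_m\mat\S_m$, $K_m\mat K_m$, $K_m\antimat\S_m$, $K_m\antimat K_m$. This is exactly the hypothesis of Lemma~\ref{lem:dcm} (with $n$ of that lemma equal to $m$, and $m$ of that lemma equal to $m$ as well, which is fine since $m\ge M$ and $m\ge N$).

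Applying Lemma~\ref{lem:dcm} with $s=t=n+2$, I extract an induced subgraph of $G'$ (hence a vertex-minor of $G$) isomorphic to $(K_{n+2}\mat K_{n+2})^{n+2}_A$, $(K_{n+2}\mat\S_{n+2})^{n+2}_A$, $(K_{n+2}\antimat K_{n+2})^{n+2}_A$, or $(K_{n+2}\antimat\S_{n+2})^{n+2}_A$ for some $0$-$1$ matrix $A=\bigl(\begin{smallmatrix}0&b\\c&d\end{smallmatrix}\bigr)$. Now I split on the shape of $A$. If $b\neq c$, then $A$ is not symmetric and Lemma~\ref{lem:notsymmetric}, applied after first taking induced subgraphs down to $(K_1\mat K_1)^{n}_A$ or $(K_1\antimat K_1)^{n+1}_A$, gives a vertex-minor isomorphic to $P_n$. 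If $b=c$, then up to the symmetry interchanging the two parts we have $A=\bigl(\begin{smallmatrix}0&0\\0&d\end{smallmatrix}\bigr)$ or $A=\bigl(\begin{smallmatrix}0&1\\1&d\end{smallmatrix}\bigr)$. When $A=\bigl(\begin{smallmatrix}0&0\\0&0\end{smallmatrix}\bigr)$, Lemma~\ref{lem:obtainnt2} (whose ``Therefore'' clause covers precisely the disjoint-union matrix) gives a vertex-minor isomorphic to $nT_{2,n}$. When $A=\bigl(\begin{smallmatrix}0&0\\0&1\end{smallmatrix}\bigr)$, Lemma~\ref{lem:simpledcm2} applies. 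When $b=c=1$, Lemma~\ref{lem:simpledcm3} applies for either value of $d$. In every case $G$ has a vertex-minor isomorphic to $P_n$ or $nT_{2,n}$, completing the proof.

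The step I expect to require the most care is the bookkeeping of parameters: making sure a single threshold $d(n)$ is large enough that Proposition~\ref{prop:main2} produces structures wide enough to feed Lemma~\ref{lem:dcm}, which in turn must output parameters $s,t\ge n+2$ so that Lemmas~\ref{lem:obtainnt2}, \ref{lem:notsymmetric}, \ref{lem:simpledcm2}, and \ref{lem:simpledcm3} all apply with room to spare for the final ``$n$ copies'' and the ``$+1$'' or ``$+2$'' shifts in those lemmas. There is also a small subtlety in the non-symmetric case: the matrices appearing there have $b\neq c$ but Lemma~\ref{lem:notsymmetric} is stated for $(K_1\mat K_1)^n_A$ and $(K_1\antimat K_1)^{n+1}_A$, so one must first pass to induced subgraphs contracting each $K_{n+2}$-block and each $\S_{n+2}$- or $K_{n+2}$-block down to a single vertex, which is legitimate since deleting vertices preserves the ``$^s_A$'' structure with the same $A$. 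Apart from these, every case is a direct invocation of an already-proved lemma, so the proof is essentially an exhaustive dispatch.
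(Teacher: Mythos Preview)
Your proposal is correct and follows essentially the same route as the paper's proof: set $m$ large in terms of $f(n+2,n+2)$ and $g(n+2,n+2)$, take $d(n)=\sigma(m)$, apply Proposition~\ref{prop:main2}, dispatch cases (i) and (iii) via Lemmas~\ref{lem:first1}--\ref{lem:first2}, and in case (ii) feed Lemma~\ref{lem:dcm} to obtain one of the $(K_{n+2}\odot H)^{n+2}_A$ structures, then finish with Lemmas~\ref{lem:obtainnt2}, \ref{lem:notsymmetric}, \ref{lem:simpledcm2}, \ref{lem:simpledcm3}. Your parameter $m\ge n^2+n$ is more than needed (the paper takes $m\ge n$, since Lemmas~\ref{lem:first1} and~\ref{lem:first2} applied with their parameter equal to $m$ already output $P_{3m-1}$ and $P_{4m-5}$), and the ``up to symmetry'' remark in the $b=c$ case is superfluous, but neither affects correctness.
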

        \begin{proof}
          Let $\sigma$ be the function defined in Proposition~\ref{prop:main2}
          and let $f$, $g$ be the functions defined in Lemma~\ref{lem:dcm}.
          Let $m:=\max(n, f(n+2,n+2),g(n+2,n+2))$.
          and let $d(n):=\sigma(m)$.
          By Proposition~\ref{prop:main2},
          $G$ has a vertex-minor $G'$ satisfying one of the following:
                \begin{enumerate}[(i)]
                \item $V(G')=X_1^*\cup X_2^*\cup\cdots \cup X_m^*\cup Q^*$
                  for disjoint sets $X_1^*$, $X_2^*,\ldots, X_m^*,Q^*$ of $m$ vertices
                  such
                  that each $X_i^*$ is a clique in $G'$,
                  $G'[X_1^*\cup X_2^*\cup \cdots \cup X_m^*]$ is isomorphic to $mK_m$,
                  and
                  either $K_m\mat \S_m$ or $K_m\antimat \S_m$
                  is isomorphic to all $G'[X_i^*\cup Q^*]$.
                \item $V(G')=X_1^*\cup X_2^*\cup\cdots \cup X_m^*\cup Y_1^*
                  \cup Y_2^*\cup\cdots\cup Y_m^*$
                  for disjoint sets $X_1^*$, $X_2^*,\ldots, X_m^*,Y_1^*,Y_2^*,\ldots,
                  Y_m^*$ of $m$ vertices
                  such
                  that each $X_i^*$ is a clique in $G'$,
                  $G'[X_1^*\cup X_2^*\cup \cdots \cup X_m^*]$ is isomorphic to $m K_m$,
                  and
                  one of  $K_m\mat \S_m$,
                  $K_m\mat K_m$,  $K_m\antimat \S_m$, 
                  and $K_m\antimat K_m$
                  is isomorphic to 
                  all $G'[X_i^*\cup Y_i^*]$.
                \item $G'$ is isomorphic to $P_m$.
                \end{enumerate}
                If (i) holds, then by Lemmas~\ref{lem:first1} and \ref{lem:first2}, $G'$ has a 
                vertex-minor isomorphic to $P_{3m-1}$ or $P_{4m-5}$.  So if (i) or (iii) holds, then 
                $G$ has a vertex-minor isomorphic to $P_n$.
                If (ii) holds, then 
                by Lemma~\ref{lem:dcm},
                $G'$ has an induced subgraph isomorphic to
                one of 
                \[(K_{n+2}\mat K_{n+2})^{n+2}_A,
                  (K_{n+2}\mat \S_{n+2})^{n+2}_A,
                  (K_{n+2}\antimat K_{n+2})^{n+2}_A, 
                  (K_{n+2}\antimat \S_{n+2})^{n+2}_A\]
                for some $0$-$1$ matrix $A=
                (\begin{smallmatrix}
                  0&b \\c&d 
                \end{smallmatrix})$.
                By Lemmas~\ref{lem:obtainnt2}, \ref{lem:notsymmetric}, \ref{lem:simpledcm2}, and \ref{lem:simpledcm3},
                $G'$ has a vertex-minor isomorphic to $P_n$ to $nT_{2,n}$.
      \end{proof}
	
		\section{Rank-depth and linear rank-width}\label{sec:application}
	
	By Theorem~\ref{thm:main}, 
for a fixed positive integer $n$, 
$nP_5$-vertex-minor free graphs have bounded 
depth-$2$ rank-brittleness, and thus have bounded rank-depth.
We will show that
they have bounded linear rank-width. 
Indeed, we will show that graphs of bounded rank-depth
have bounded linear rank-width.
This was also proved by 
Ganian, Hlin\v{e}n\'{y}, Ne{\v{s}}et{\v{r}}il, Obdr\v{z}\'{a}lek, 
and Ossona de Mendez~\cite[Proposition 3.4]{GHNOO2017} in terms of shrub-depth and linear clique-width, but our proof provides an explicit bound.

	First let us review the definition of linear rank-width \cite{Ganian2011,JKO2014,Oum2016}.
For a graph $G$, an ordering $(x_1, \ldots, x_n)$ of the vertex set $V(G)$ is called a \emph{linear layout} of $G$.  
	If $\abs{V(G)}\ge 2$, then the \emph{width} of a linear layout $(x_1,\ldots, x_n)$ of~$G$ is defined
as
$\displaystyle\max_{1\le i\le n-1}\cutrk_G(\{x_1,\ldots,x_i\})$,
and if $\abs{V(G)}=1$, then the width is defined to be $0$.
	The \emph{linear rank-width} of~$G$, denoted by $\lrw(G)$, is defined as the minimum width over all linear layouts of $G$. 
	It is easy to see that if $H$ is a vertex-minor of $G$, then $\lrw(H)\le \lrw(G)$.

	\begin{proposition}\label{prop:inequality}
          For a graph $G$, $\lrw(G)\le \rd (G)^2$.
	\end{proposition}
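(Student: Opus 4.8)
The plan is to fix a $(k,k)$-decomposition $(T,\sigma)$ of $G$ with $k=\rd(G)$ (we may assume $\abs{V(G)}\ge 2$, since otherwise both sides are $0$), root $T$ at a centre, and produce a linear layout of $V(G)$ by listing the leaves of the rooted tree in depth-first order. The key point will be that every prefix of this layout decomposes into at most $k$ sets, each of which is a union of parts of the partition $\mathcal P_v$ of a single internal node $v$; since each such union has cut-rank at most the width of $v$, which is at most $k$, iterating submodularity of $\rho_G$ then bounds the cut-rank of the prefix by $k^2$.

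In more detail, since $(T,\sigma)$ has radius at most $k$ there is a node $r$ with $d_T(r,w)\le k$ for every node $w$; root $T$ at $r$. As $T$ has an internal node and at least two leaves, it has at least three vertices, so its centre $r$ is an internal node. Fix an ordering of the children of each node, and let $x_1,\dots,x_N$ list $V(G)$ so that $\sigma(x_1),\dots,\sigma(x_N)$ is the order in which a depth-first search visits the leaves; this is the layout I would use. For a prefix $A_i=\{x_1,\dots,x_i\}$ with $1\le i\le N-1$, write $\ell=\sigma(x_i)$ and let $r=u_0,u_1,\dots,u_m=\ell$ be the path in $T$ from $r$ to $\ell$, so $1\le m\le k$ (here $m\ge 1$ because $r$ is not a leaf). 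I would then check the identity $A_i=D_0\cup D_1\cup\dots\cup D_{m-1}$, where for $j<m-1$ the set $D_j$ is the union of the leaf-sets of the subtrees rooted at those children of $u_j$ that precede $u_{j+1}$ in the chosen ordering, and $D_{m-1}$ is that union for $u_{m-1}$ together with the single vertex $x_i$. The verification is a routine unwinding of the depth-first order: a leaf visited no later than $\ell$ and distinct from $\ell$ sits, at its lowest common ancestor $u_j$ with $\ell$, in a child-subtree of $u_j$ that strictly precedes the child-subtree containing $\ell$; conversely every vertex appearing in some $D_j$ or equal to $x_i$ is visited at or before step $i$.

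Each subtree rooted at a child of $u_j$ is a component of $T-u_j$, and $\{x_i\}$ is the one-leaf component $\ell$ of $T-u_{m-1}$; hence every $D_j$ is a union of parts of $\mathcal P_{u_j}$, and since each $u_j$ with $j\le m-1$ is internal, $\rho_G(D_j)$ is at most the width of $u_j$, which is at most $k$. Iterating the submodular inequality in the form $\rho_G(X\cup Y)\le\rho_G(X)+\rho_G(Y)$ yields $\rho_G(A_i)\le\sum_{j=0}^{m-1}\rho_G(D_j)\le mk\le k^2$. As this holds for every prefix, the width of the layout $(x_1,\dots,x_N)$ is at most $k^2$, so $\lrw(G)\le k^2=\rd(G)^2$.

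The only real obstacle is the bookkeeping in the identity $A_i=D_0\cup\dots\cup D_{m-1}$: one must make sure the number of pieces equals the depth of $\ell$ in the rooted tree — and hence is bounded by the radius $k$, rather than by something larger such as the total number of children along the path — and one must dispose of the terminal leaf $x_i$ cleanly by absorbing it into $D_{m-1}$ via the single-leaf component of $\mathcal P_{u_{m-1}}$. The remaining ingredients (existence of the $(k,k)$-decomposition, that the centre of a tree on at least three vertices is internal, and the iterated submodularity) are immediate.
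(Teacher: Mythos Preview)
Your proof is correct and follows essentially the same approach as the paper: both root $T$ at a node of eccentricity at most $k$, order $V(G)$ by a depth-first traversal of the leaves, and bound the cut-rank of each prefix (you) or suffix (the paper) by decomposing it into at most $k$ pieces along the root-to-current-leaf path, each piece being a union of parts of $\mathcal P_{u_j}$ for an internal node $u_j$ and hence of cut-rank at most $k$, then summing via submodularity. The only cosmetic difference is that the paper works with the complementary set $B_i'$ and indexes the pieces by the nodes on the DFS path, while you work with the prefix $A_i$ and explicitly absorb the terminal leaf into $D_{m-1}$.
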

	\begin{proof}
	If $G$ has $1$ vertex, then $\lrw(G)=\rd(G)=0$. So, we may assume that $G$ has at least $2$ vertices.
	Let $k=\rd(G)$, and let $(T, \sigma)$ be a $(k,k)$-decomposition of $G$.
	Let $r$ be a node of $T$ within distance at most $k$ from every node of $T$.
	
	Let $v_1, v_2, \ldots, v_m$ be a DFS ordering of $T$.
        Let $n=\abs{V(G)}$.
        Let $w_1,w_2,\ldots,w_n$ be an ordering of the vertices of $G$
        such that for all $1\le i<j\le n$, $\sigma(i)$ appears before $\sigma(j)$
        in the DFS ordering $v_1,v_2,\ldots,v_m$ of $T$.
	We claim that $(w_1, w_2, \ldots, w_n)$ has width at most $k^2$.
	Let $i\in \{1, 2, \ldots, m\}$, $A_i:=\{v_1, \ldots, v_i\}$,  $B_i:=V(T)\setminus A_i$,
        $A_i'=\{v\in v(G): \sigma(v)\in A_i\}$,
        and $B_i'=\{v\in V(G):\sigma(v)\in B_i\}$.
	By the property of the depth-first search,
        $T$ has a path $P_i$ from $r$ consisting of nodes in $A_i$
        such that
        for each node $w$ in $B_i$,
        the first vertex in $A_i$ 
        in the path from $w$ to $r$ is on $P_i$. 

        As $T$ has radius at most $k$,
        we can take $P_i$ to have length at most $k-1$.

        For $w\in V(P_i)$, let $X_w$ be the set of all vertices $x$ of $G$
        mapped to a node $\sigma(x)$ in $B_i$
        such that $w$ is  the first vertex in $A_i$ 
        in the path from $\sigma(x)$ to $r$ is on $P_i$.

        Since $(T,\sigma)$ has width at most $k$,
        the cut-rank of $X_w$ is at most $k$.
        As $B_i'=\bigcup_{w\in V(P_i)} X_w$,
        we deduce that
        $\rho_G(B_i')\le \sum_{w\in V(P_i)} \rho_G(X_w)\le k^2$
        by the submodularity of the cut-rank function.
        This implies that the width of the linear layout is at most $k^2$.
	\end{proof}
	
	\begin{corollarylrw}
          For every positive integer $n$,
          graphs with no vertex-minors isomorphic to $nP_5$
          have 
          bounded depth-$2$ rank-brittleness,
          bounded rank-depth, and bounded linear rank-width.
	\end{corollarylrw}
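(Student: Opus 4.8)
The plan is to read this off from Theorem~\ref{thm:main}, from Proposition~\ref{prop:inequality}, and from the inequality $\rd(G)\le\max\{d,\rbrit_d(G)\}$ recorded in the introduction. Fix a positive integer $n$. If $G$ has no vertex-minor isomorphic to $P_5$ then, since $P_5$ is an induced subgraph (hence a vertex-minor) of $nP_5$ and the vertex-minor relation is transitive, $G$ has no vertex-minor isomorphic to $nP_5$ either; thus the class of graphs with no $P_5$-vertex-minor is contained in the class for $n=2$, and it suffices to treat $n\ge 2$. So assume $n\ge 2$ and let $\mathcal{C}$ be the class of all graphs with no vertex-minor isomorphic to $nP_5$. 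Transitivity of the vertex-minor relation makes $\mathcal{C}$ a vertex-minor ideal, so Theorem~\ref{thm:main} applies.

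Next I would verify the two conditions of Theorem~\ref{thm:main}. For the first: deleting the vertices in positions $6,12,\ldots,6(n-1)$ from the path $P_{6n-1}$ leaves $n$ vertex-disjoint, pairwise non-adjacent copies of $P_5$, so $nP_5$ is an induced subgraph, hence a vertex-minor, of $P_{6n-1}$; thus $P_{6n-1}\notin\mathcal{C}$ and $\{P_1,P_2,P_3,\ldots\}\nsubseteq\mathcal{C}$. For the second: as noted right after Theorem~\ref{thm:main}, for $m\ge 2$ the graph $T_{2,m}$ contains $P_5$ as an induced subgraph (a leaf, its subdivision vertex, the center, a second subdivision vertex, its leaf), so picking one such $P_5$ in each of the $n$ components of $nT_{2,n}$ exhibits $nP_5$ as an induced subgraph of $nT_{2,n}$; thus $nT_{2,n}\notin\mathcal{C}$ and $\{T_{2,1},2T_{2,2},3T_{2,3},\ldots\}\nsubseteq\mathcal{C}$. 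With both conditions in hand, Theorem~\ref{thm:main} yields a constant $k=k(n)$ with $\rbrit_2(G)\le k$ for all $G\in\mathcal{C}$.

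Finally I would push this bound to the other two parameters: $\rd(G)\le\max\{2,\rbrit_2(G)\}\le\max\{2,k\}$ shows $\mathcal{C}$ has bounded rank-depth, and then Proposition~\ref{prop:inequality} gives $\lrw(G)\le\rd(G)^2\le\max\{2,k\}^2$, so $\mathcal{C}$ has bounded linear rank-width. For $n=1$ the same bounds follow from the containment observed in the first paragraph.

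I do not anticipate any real difficulty: all of the work is already carried out in Theorem~\ref{thm:main} and Proposition~\ref{prop:inequality}, and the only fresh ingredients are the two elementary embeddings $nP_5\preceq P_{6n-1}$ (by vertex deletions) and $nP_5\preceq nT_{2,n}$ (as an induced subgraph), both of which are immediate. The one point that merits a moment's care is the reduction of the case $n=1$ to $n=2$, so that the avoided graph in the second condition of Theorem~\ref{thm:main} really does fall outside $\mathcal{C}$.
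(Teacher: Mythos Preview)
Your proposal is correct and follows essentially the same route as the paper: show $\mathcal{C}$ is a vertex-minor ideal, verify the two hypotheses of Theorem~\ref{thm:main} via the embeddings $nP_5\subseteq P_{6n-1}$ and $nP_5\subseteq nT_{2,n}$, then pass to rank-depth via $\rd(G)\le\max\{2,\rbrit_2(G)\}$ and to linear rank-width via Proposition~\ref{prop:inequality}. You are in fact slightly more careful than the paper's own proof, which asserts $nT_{2,n}\notin\mathcal{C}$ without comment; for $n=1$ this would mean $T_{2,1}=P_3$ has $P_5$ as a vertex-minor, which is false, so your explicit reduction of $n=1$ to $n=2$ patches a small imprecision.
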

	\begin{proof}
	Let $\mathcal{C}$ be the class of $nP_5$-vertex-minor free graphs. Then $P_{6n}\notin \mathcal{C}$ and $nT_{2,n}\notin \mathcal{C}$.
	Thus, by Theorem~\ref{thm:main}, $\mathcal{C}$ has bounded depth-$2$ rank-brittleness, and thus bounded rank-depth.
	By Proposition~\ref{prop:inequality}, it also has bounded linear rank-width.
	\end{proof}
	
		\providecommand{\bysame}{\leavevmode\hbox to3em{\hrulefill}\thinspace}
\providecommand{\MR}{\relax\ifhmode\unskip\space\fi MR }
% \MRhref is called by the amsart/book/proc definition of \MR.
\providecommand{\MRhref}[2]{%
  \href{http://www.ams.org/mathscinet-getitem?mr=#1}{#2}
}
\providecommand{\href}[2]{#2}

\end{document}